\documentclass[11pt]{scrartcl}
\pdfoutput=1
\usepackage[utf8x]{inputenc}
\usepackage[T1]{fontenc}
\usepackage{lmodern}
\usepackage[english]{babel}

\usepackage[draft=false,kerning=true]{microtype}

\usepackage{amsmath,amssymb,amsthm,graphicx}
\usepackage{comment}
\usepackage{enumitem}
\usepackage{todonotes}
\usepackage{multirow}
\usepackage{mathtools}
\usepackage{tikz}
\usepackage{hyperref}

\newtheorem{theorem}{Theorem}[section]
\newtheorem{definition}[theorem]{Definition}
\newtheorem{notation}[theorem]{Notation}
\newtheorem{lemma}[theorem]{Lemma}
\newtheorem{corollary}[theorem]{Corollary}
\newtheorem{conjecture}[theorem]{Conjecture}

\newtheorem{proposition}[theorem]{Proposition}

\theoremstyle{remark}

\newenvironment{example}
    {\pushQED{\qed}\examplex}
    {\popQED\endexamplex}
\newenvironment{remark}
    {\pushQED{\qed}\remarkx}
    {\popQED\endremarkx}

\DeclarePairedDelimiter{\card}{\lvert}{\rvert}
\newcommand{\cG}{\mathcal{G}}
\newcommand{\cH}{\mathcal{H}}

\newcommand{\fviz}{f_{\mathrm{viz}}}
\newcommand{\Isu}{I_{\mathrm{su}}}
\newcommand{\Iviz}{I_{\mathrm{viz}}}
\newcommand{\K}{\mathbb{K}}
\newcommand{\Kbar}{\overline{\mathbb{K}}}
\newcommand{\cmN}{\mathit{cmN}}

\newcommand{\Q}{\mathbb{Q}}

\newcommand{\R}{\mathbb{R}}

\DeclarePairedDelimiter{\parentheses}{(}{)}
\DeclarePairedDelimiter{\set}{\{}{\}}
\newcommand{\variety}[1]{\mathcal{V}(#1)}
\newcommand{\Z}{\mathbb{Z}}
\newcommand{\SigSubset}[1]{\sigma_{g,#1}}
\newcommand{\SigPrimeSubset}[1]{\sigma_{g',#1}}
\newcommand{\tauij}[2]{\tau_{#1,#2}}
\newcommand{\SigIneq}[1]{\sigma^{\mathrm{I}}_{g,#1}}
\newcommand{\SigAll}[1]{\sigma^{\mathrm{A}}_{g,#1}}

\numberwithin{equation}{section}

\title{Towards a Computational Proof\\
     of Vizing's Conjecture \\
     using Semidefinite Programming \\ 
     and Sums-of-Squares}

\author{Elisabeth Gaar, Daniel Krenn,\\ Susan Margulies and Angelika Wiegele}
\date{}



\begin{document}

\maketitle

\begin{abstract}
  Vizing's conjecture (open since 1968) relates the product of the
  domination numbers of 
  two graphs to the domination number of their Cartesian product
  graph. In this paper, we formulate Vizing's conjecture
  as a Positivstellensatz existence question. In particular, we
  select classes of graphs according to their number of vertices and
  their domination number and 
  encode the conjecture as an ideal/polynomial pair such that the
  polynomial is non-negative on the variety associated with the ideal if and 
  only if the 
  conjecture is true
  for this graph class.
  Using semidefinite programming we  
  obtain numeric sum-of-squares certificates, which we then manage to 
  transform into symbolic certificates
  confirming non-negativity of our polynomials.
  Specifically,
  we obtain exact low-degree sparse sum-of-squares certificates for
  particular classes of graphs.

  The obtained certificates allow generalizations for larger graph classes. 
  Besides computational verification of these more general certificates, we also
  present theoretical proofs as well as conjectures and questions for
  further investigations.   
\end{abstract}

\bgroup%
\let\thefootnote\relax%
\footnotetext{An extended abstract containing the ideas of this
  optimization-based approach for tackling Vizing's conjecture
  appeared
  as~\cite{Gaar-Krenn-Margulies-Wiegele:2019:vizing-optimization-sos}.
  This article now also contains the full and complete proofs, new
  certificates only conjectured in the extended abstract, and further
  theoretical and computational results for particular cases.  This also
  lead to a restructuring of the whole article, and more examples and
  many more remarks explaining the implications of the results are
  provided.}%
\footnotetext{The authors gratefully acknowledge the support of
  Fulbright Austria (via a Visiting Professorship at
  Alpen-Adria-Universität Klagenfurt).
  This project has received funding from
  the European Union's Horizon~2020 research and innovation programme
  under the Marie Sk\l{}odowska-Curie grant agreement No~764759,
  the Austrian Science Fund (FWF): I\,3199-N31 and
  the Austrian Science Fund (FWF): P\,28466-N35.
}%
\egroup

\section{Introduction}

Sum-of-squares and its relationship to semidefinite
programming is a cutting-edge tool at the forefront of polynomial
optimization \cite{sos-instructions}. Activity in this area has
exploded over the past two decades to span areas as diverse as real
and convex algebraic geometry \cite{sos_geom}, control theory
\cite{sos_control}, proof complexity \cite{sos_proof}, theoretical
computer science \cite{sos_approx} and even quantum computation
\cite{sos_quantum}.  Systems of polynomial equations and other
non-linear models are similarly widely known for their compact and
elegant representations of combinatorial problems. Prior work on
polynomial encodings includes colorings \cite{alontarsi,hillarwindfeldt},
stable sets
\cite{susan_cpc,lovasz1}, matchings \cite{fischer},
and flows \cite{Onn}. In this project, we combine the
modeling strength of systems of polynomial equations with the
computational power of semidefinite programming and devise an
optimization-based framework for a computational proof of an old, open
problem in graph theory, namely Vizing's conjecture.

Vizing's conjecture was first proposed in 1968, and relates the sizes
of minimum dominating sets in graphs $G$ and $H$ to the size of a
minimum dominating set in the Cartesian product graph $G \Box
H$; a precise formulation follows as Conjecture~\ref{conjecture:vizing}.
Prior algebraic work on this conjecture \cite{susan_viz1}
expressed the problem as the union of a certain set of varieties and
thus the intersection of a certain set of ideals. However, algebraic
computational results have remained largely untouched. In this
project, we present an algebraic model of Vizing's conjecture that
equates the validity of the conjecture to the existence of a
Positivstellensatz, or a sum-of-squares certificate of non-negativity
modulo a carefully constructed ideal.

By exploiting the relationship between the Positivstellensatz and
semidefinite programming, we are able to produce sum-of-squares
certificates for certain classes of graphs where Vizing's conjecture
holds. Thus, not only are we demonstrating an optimization-based
approach towards a computational proof of Vizing's conjecture, but we
are presenting actual minimum degree non-negativity certificates that
are algebraic proofs of instances of this combinatorial problem.
Although the underlying graphs do not further what is known
about Vizing's conjecture at this time (indeed the
combinatorics of the underlying graphs is fairly easy), such a 
construction of ``combinatorial'' Positivstellens\"atze
is successfully executed for the first time here.
The construction process is an
elegant combination of computation, guesswork, computer algebra and 
proofs relying on clever definitions of certain polynomials as well as 
tricky manipulations.

Our paper is structured as follows. In Section~\ref{sec_back},
we present the necessary background and definitions from graph
theory and commutative algebra. In Section~\ref{sec_poly_I}, we
begin the heart of the paper: we describe the ideal/polynomial pair
that models Vizing's conjecture as a sum-of-squares problem. 
This pair is parametrized by the sizes~$n_\cG$ and $n_\cH$ of the
graphs~$G$ and $H$ respectively, and on the sizes $k_\cG$ and $k_\cH$
of a minimum dominating set in these graphs.
In
Section~\ref{sec:methodology}, we describe our precise process for finding the
sum-of-squares certificates along with an example.
In Sections~\ref{sec:proof_of_kG=nG_and_kH=nH-1}
and~\ref{sec:certificates_kGIsnGMinus1_kHEqnHMinus1}, we present our
computational results and the Positivstellens\"atze, i.e., the theorems that
arise for various generalizations.
In particular, in Section~\ref{sec:proof_of_kG=nG_and_kH=nH-1}, we
introduce certain symmetric polynomials that not only allow for a
compact notation, but also are vital in proving correctness of the
certificates. With the help of the developed calculus, we investigate the graph
classes where $k_\cG = n_\cG$ and $k_\cH = n_\cH-d$ and present 
certificates for $d\in \{0,\dots, 4\}$ (all other parameters
arbitrary). We provide formal proofs for $d\le 2$ and computational
proofs using SageMath~\cite{sagemath} for $d \le 4$.
Moreover, for fixed integer~$d$,
we explain an algorithm for computing a certificate or proving
that there is none of the conjectured form.
Then, in Section~\ref{sec:certificates_kGIsnGMinus1_kHEqnHMinus1}, the
case $k_\cG = n_\cG -1$ and $k_\cH = n_\cH -1$ is considered. For this
class, we obtain certificates for $n_\cH \in \{2,3\}$ ($n_\cG$
arbitrary) and prove their correctness.   
Finally, in Section~\ref{sec_conc}, we summarize our project,
state some concluding remarks and
present our ideas for future work.
For the sake of completeness, an appendix provides certificates (along
with proofs) that arose during the application of our method but were
dismissed after we obtained certificates with simpler forms.

The code accompanying this article can be found at
\url{https://gitlab.com/dakrenn/vizing-sdp-sos}.%
\footnote{The code at \url{https://gitlab.com/dakrenn/vizing-sdp-sos}
  is meant to be used with the
  open source mathematics software SageMath~\cite{sagemath} and
  the solver MOSEK~\cite{mosek} within MATLAB.}

\section{Backgrounds and Definitions}\label{sec_back}
In this section, we recall all necessary definitions from graph
theory, polynomial ideals and commutative algebra.

\subsection{Definitions from Graph Theory}

Given a graph $G$ with vertex set~$V(G)$,
a set $D \subseteq V(G)$ is a \emph{dominating set in $G$}
if for each $v \in V(G)\setminus D$, there is a $u \in D$ such that $v$ is
adjacent to $u$ (i.e., there is an edge between $u$ and $v$) in $G$. 
A dominating set is called \emph{minimum} if there is no 
dominating set of smaller size (i.e., cardinality).
The \emph{domination number of $G$}, denoted by
$\gamma(G)$, is the size of a minimum
dominating set in $G$. The
decision problem of determining whether a given graph has a dominating
set of size $k$ is NP-complete \cite{garey_and_johnson}.

Given graphs $G$ and $H$ with edge sets~$E(G)$ and $E(H)$ respectively,
the Cartesian product graph $G \Box H$ has vertex set%
\footnote{It will be convenient to use the short notation $gh = (g,h)$
  for an element of the vertex set~$V(G) \times V(H)$
  of the Cartesian product graph $G \Box H$.}
$V(G) \times V(H)$ and edge set
\begin{align*}
  E(G \Box H) = \big\{\set{gh, g'h'} \colon &\text{$g = g'$ and $\set{h,h'} \in E(H)$, or}\\
  &\text{$h = h'$ and $\set{g,g'} \in E(G)$}\big\}.
\end{align*}

In 1968, Vadim G.~Vizing conjectured the following beautiful relationship
between domination numbers and Cartesian product graphs:
\begin{conjecture}[Vizing \cite{vizing}, 1968]\label{conjecture:vizing}
Given graphs $G$ and $H$, then the inequality
\begin{equation*}
  \gamma(G)\,\gamma(H) \leq \gamma(G \Box H)
\end{equation*}
holds.
\end{conjecture}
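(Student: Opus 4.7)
Since Conjecture~\ref{conjecture:vizing} has been open since 1968 and is widely regarded as out of reach by direct combinatorial means, the proposal below is not a stand-alone proof but the algebraic-computational strategy this paper is built around: rephrase Vizing's inequality as a Positivstellensatz question, produce sum-of-squares certificates via semidefinite programming, and extrapolate from numerical certificates to symbolic ones on parametric families.

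The plan is, for fixed positive integers $n_\cG,n_\cH,k_\cG,k_\cH$, to introduce polynomial indeterminates encoding (i) the $\set{0,1}$-entries of the (symmetric, zero-diagonal) adjacency matrices of $G$ and $H$, (ii) indicator variables for a candidate dominating set of size $k_\cG$ in $G$ and one of size $k_\cH$ in $H$, and (iii) indicator variables for a dominating set of size $k_\cG k_\cH - 1$ in $G\Box H$. I would then assemble an ideal $I$ whose real variety $\variety{I}$ consists precisely of the putative counter-examples in this parameter slice, and design an auxiliary polynomial $f$ such that $f\ge 0$ on $\variety{I}$ is equivalent to Vizing's inequality on this class. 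The Positivstellensatz then translates the required non-negativity into the existence of a decomposition $f = \sum_i p_i^2 + q$ with $q\in I$, i.e., a sum-of-squares certificate modulo $I$. Searching for such a certificate of degree at most $2d$ becomes a semidefinite feasibility problem in the coefficients of the $p_i$ and of $q$, which can be solved numerically; the certificate is then rounded to exact rational data and re-verified.

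The main obstacle will be scaling: the number of monomials of fixed degree in the SDP grows exponentially in $n_\cG$ and $n_\cH$, and naive formulations become intractable almost immediately. I therefore expect to restrict to the small parametric slices highlighted in the introduction, such as $k_\cG=n_\cG$ with $k_\cH=n_\cH-d$ for small $d$, and $k_\cG=n_\cG-1$, $k_\cH=n_\cH-1$, and to exploit the natural $S_{n_\cG}\times S_{n_\cH}$ action on the vertex labels in order to reduce to symmetric polynomials. The genuinely hard creative step, however, is not the SDP itself but the extrapolation: recognizing the structure of the numerically rounded $p_i$, conjecturing a closed-form family indexed by $n_\cG$ and $n_\cH$, and then verifying the resulting polynomial identity modulo $I$ by direct (if intricate) manipulation. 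Only this last step actually yields a proof for an infinite family of graph classes, and it is where clever auxiliary polynomials and non-obvious algebraic identities will be indispensable.
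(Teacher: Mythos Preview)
You correctly identify that Conjecture~\ref{conjecture:vizing} is open and that the paper offers a strategy rather than a proof, and your outline of ``SDP $\to$ numeric certificate $\to$ guessed exact certificate $\to$ symbolic verification $\to$ generalization'' matches the paper's methodology (Section~\ref{sec:methodology}). However, your description of the ideal/polynomial pair conflates two distinct formulations and, as written, does not quite make sense.

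You propose to encode indicator variables for a dominating set of size $k_\cG k_\cH-1$ in $G\Box H$ and to build an ideal whose variety is the set of \emph{counter-examples}. That is a Nullstellensatz formulation: proving Vizing on this slice would then amount to showing the variety is empty, i.e., $1\in I$, and there is no role for an auxiliary $f$ with $f\ge 0$ on $\variety{I}$. The paper's actual Positivstellensatz formulation is different: the ideal~$\Iviz$ (Definitions~\ref{def:IG}, \ref{def:IGH}, \ref{def:Isos}) encodes triples $(G,H,D)$ where $D$ is a dominating set of \emph{arbitrary} size in $G\Box H$, with \emph{no} constraint tying $\card{D}$ to $k_\cG k_\cH$. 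The polynomial is then $\fviz=\sum_{gh} x_{gh}-k_\cG k_\cH$ (Definition~\ref{def:fStar}), so that $\fviz\ge 0$ on $\variety{\Iviz}$ says every dominating set of $G\Box H$ has size at least $k_\cG k_\cH$; this is what is certified as a sum-of-squares modulo~$\Iviz$ (Theorem~\ref{proposition:VizSDPConj}, Corollary~\ref{proposition:VizSDPlsos}). Two further modeling choices you miss: the paper does \emph{not} carry indicator variables for the dominating sets of $G$ and $H$ but fixes $D_\cG$ and $D_\cH$ outright (a relabeling, which prunes symmetry and shrinks the SDP), and the paper does not exploit an $S_{n_\cG}\times S_{n_\cH}$ symmetry reduction in the SDP; instead, the symmetric structure enters later through the polynomials~$\SigSubset{i}$ of Section~\ref{sec:sigma-calculus} when proving the guessed certificates.
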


\tikzset{
  vertex/.style = {circle, fill=gray!20, draw=black, minimum size=8pt, inner sep=0pt},
  edge/.style = {very thick},
  dominating/.style = {fill=gray},
}
\begin{example}\label{ex_prod_graph}
  In this example, we demonstrate the Cartesian product graph of two
  $C_4$ cycle graphs:
    
\begin{center}
  \begin{tikzpicture}[scale=0.7]

    \foreach \y/\sty in {0/, 1/dominating, 2/, 3/dominating}
    \node[vertex, \sty] (G-\y) at (-1.5, \y) {};
    
    \foreach \x/\sty in {0/dominating, 1/, 2/dominating, 3/}
    \node[vertex, \sty] (H-\x) at (\x, 4.5) {};
    
    \foreach \x/\y/\sty in {0/0/, 1/0/, 2/0/dominating, 3/0/,
                            0/1/dominating, 1/1/, 2/1/, 3/1/,
                            0/2/, 1/2/, 2/2/dominating, 3/2/,
                            0/3/dominating, 1/3/, 2/3/, 3/3/}
    \node[vertex, \sty] (GH-\x-\y) at (\x, \y) {};

    \foreach \xa/\xb in {0/1, 1/2, 2/3}
    {
      \foreach \y in {0, 1, 2, 3}
      {
        \draw [edge] (GH-\xa-\y) -- (GH-\xb-\y);
        \draw [edge] (GH-3-\y) to[out=155, in=25] (GH-0-\y);
      }
      \draw [edge] (H-\xa) -- (H-\xb);
    }
    \draw [edge] (H-3) to[out=155, in=25] (H-0);

    \foreach \ya/\yb in {0/1, 1/2, 2/3}
    {
      \foreach \x in {0, 1, 2, 3}
      {
        \draw [edge] (GH-\x-\ya) -- (GH-\x-\yb);
        \draw [edge] (GH-\x-3) to[out=-115, in=115] (GH-\x-0);
      }
      \draw [edge] (G-\ya) -- (G-\yb);
    }
    \draw [edge] (G-3) to[out=-115, in=115] (G-0);

    \node at (-1.5, 3.75) {$G=C_4$};
    \node at (-1.25, 4.5) {$H=C_4$};
    \node at (1.5, -0.75) {$G \Box H$};
  \end{tikzpicture}
\end{center}

In these graphs,~\tikz{\node[vertex, dominating] {};} represents a
vertex in a minimum dominating set, and Vizing's conjecture holds with equality:
$\gamma(G)\,\gamma(H) = 2\cdot 2 = 4 = \gamma(G \Box H)$. However,
observe that some copies of $G$ in $G\Box H$ do not contain any
vertices of the dominating set, i.e., they are dominated entirely
by vertices in other ``layers'' of the graph. This example highlights
the difficulty of Vizing's conjecture.
\end{example}

\subsection{Historical Notes}
Vizing's conjecture is an active area of research spanning over fifty
years. Early results have focused on proving the conjecture for
certain classes of graphs. For example, in 1979, Barcalkin and
German \cite{viz_bar_ger} proved that Vizing's conjecture holds for
graphs satisfying a certain ``partitioning condition'' on the vertex
set. The idea of a ``partitioning condition'' inspired work for the
next several decades, as Vizing's conjecture was shown to hold on
paths, trees, cycles, chordal graphs, graphs satisfying certain
coloring properties, and graphs with $\gamma(G) \leq 2$. These results
are clearly outlined in the 1998 survey paper by Hartnell and Rall
\cite{viz_dom_bk}.
In 2004,
Sun~\cite{viz_sun} showed that Vizing's conjecture holds on graphs
with $\gamma(G)\leq 3$.
There are also results proving a weaker version of the conjecture, for example,
the recent result of Zerbib~\cite{Zerbib2019}
showing that $\gamma(G)\,\gamma(H) + \max \{\gamma(G),\gamma(H)\} \leq 2\, \gamma(G \Box H)$.
The 2009 survey paper~\cite{viz_survey_2009}
summarizes the work from 1968 to 2008, contains new results,
new proofs of existing results, and comments about minimal
counter-examples.

\subsection{Definitions around Polynomial Ideals and Sum-of-Squares}
Our goal is to model Vizing's conjecture as a semidefinite program. 
In particular, we will create an ideal/polynomial pair such
that the polynomial is non-negative on the variety associated with the ideal
 if
and only if Vizing's conjecture is true. 

In this subsection, we present a brief introduction to polynomial
ideals, and the relationship between non-negativity and
sum-of-squares. This material is necessary for understanding our
polynomial ideal model of Vizing's conjecture. For a more thorough
introduction to this material see \cite{sos-instructions} and~\cite{coxetal}.

Throughout this section,
let $I$ be an ideal in a polynomial ring~$P=\K[z_1,\ldots,z_n]$
over a field $\K \subseteq \R$.
The \emph{variety of the ideal~$I$} is defined as
the set
\begin{equation*}
  \variety{I} = \{z^\ast\in\Kbar^n \colon \text{$f(z^\ast)=0$~for all $f \in I$}\}
\end{equation*}
with $\Kbar$ being the algebraic closure of~$\K$.
The variety $\variety{I}$ is called \emph{real}
if $\variety{I} \subseteq \R^n$.

We
say that the ideal~$I$ is \emph{radical}
if whenever $f^m \in I$ for some polynomial~$f\in P$ and integer
$m \geq 1$, then $f \in I$.
It should be mentioned that radical ideals and varieties are closely connected.%
\footnote{If the ideal~$I$ is radical, then $I = I(\variety{I})$
  where $I(\variety{I})$ is the ideal vanishing on $\variety{I}$.
  We do not need this statement in our paper explicitly---the spirit
  of the very same, however, is present.}

The concrete ideals that we are using later on are all radical. This is a
consequence of the following lemma.

\begin{lemma} (\cite[Section 3.7.B, pg. 246]{kreuzer})\label{lem_kreuzer_rad}
  Let $I$ be an ideal with finite variety~$\variety{I}$.
  If the ideal~$I$ contains a univariate
  square-free polynomial in each variable, then $I$ is radical.
\end{lemma}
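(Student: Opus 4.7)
The plan is to replace $I$ with its subideal $J = \langle p_1(z_1), \ldots, p_n(z_n)\rangle$ generated by the square-free univariate polynomials from the hypothesis. I would first show that $\K[z_1,\ldots,z_n]/J$ has a very rigid structure, namely it is a finite direct product of fields, and then transfer this property to the quotient $\K[z_1,\ldots,z_n]/I$ to conclude reducedness (equivalently, radicality of $I$).

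For the structural analysis of $J$, since $\K\subseteq\R$ has characteristic zero, each square-free $p_i$ has pairwise distinct roots in $\Kbar$, and so by the Chinese Remainder Theorem the quotient $\K[z_i]/(p_i(z_i))$ decomposes as a finite direct product of separable field extensions of $\K$. Using the standard identification
\begin{equation*}
\K[z_1,\ldots,z_n]/J \;\cong\; \K[z_1]/(p_1(z_1)) \otimes_\K \cdots \otimes_\K \K[z_n]/(p_n(z_n)),
\end{equation*}
and distributing the tensor product over the finite direct product decompositions of each factor, one obtains $\K[z_1,\ldots,z_n]/J$ as a finite direct product of algebras, each of which is a tensor product of finite separable field extensions of $\K$. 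In characteristic zero, such a tensor product is again a finite product of fields, so $\K[z_1,\ldots,z_n]/J$ is itself a finite direct product of fields.

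For the transfer to $I$, the inclusion $J\subseteq I$ yields $\K[z_1,\ldots,z_n]/I \cong (\K[z_1,\ldots,z_n]/J)/(I/J)$. Since every ideal of a finite direct product of fields is the subproduct over some subcollection of factors, the quotient $\K[z_1,\ldots,z_n]/I$ is again a finite product of fields, and in particular is reduced. Reducedness of $\K[z_1,\ldots,z_n]/I$ is exactly the statement that $I$ is radical, completing the argument.

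The main technical obstacle is the claim that a tensor product of finite separable field extensions of $\K$ is itself a product of fields; this is classical but relies crucially on separability, which is why the characteristic-zero hypothesis implicit in $\K\subseteq\R$ matters. An alternative, perhaps more elementary route would bypass tensor products by extending scalars to $\Kbar$: the ideal $J\Kbar[z_1,\ldots,z_n]$ is zero-dimensional and $\dim_\Kbar \Kbar[z_1,\ldots,z_n]/J\Kbar[z_1,\ldots,z_n]$ equals the cardinality of $\variety{J}$, which by the classical criterion shows $J\Kbar[z_1,\ldots,z_n]$ is radical; faithful flatness of $\Kbar[z_1,\ldots,z_n]$ over $\K[z_1,\ldots,z_n]$ then descends this to $\K[z_1,\ldots,z_n]$. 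I would also remark that the finiteness of $\variety{I}$ is automatic from the remaining hypotheses, since $\variety{I}\subseteq\variety{J}$ and the latter is already finite.
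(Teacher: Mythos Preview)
The paper does not actually prove this lemma; it is quoted from an external reference (Kreuzer--Robbiano), so there is no in-paper argument to compare against. Your proof is correct and follows a standard structure-theoretic line: pass to the subideal $J$ generated by the univariate square-free polynomials, identify $\K[z_1,\ldots,z_n]/J$ with the tensor product of the $\K[z_i]/(p_i)$, use separability (guaranteed by $\operatorname{char}\K=0$) to see this is a finite product of fields, and then observe that any further quotient remains a finite product of fields and hence reduced. Your side remark that the finiteness hypothesis on $\variety{I}$ is redundant is also correct, since $\variety{J}$ is already finite. The alternative route you sketch via base change to $\Kbar$ and faithful flatness is likewise valid and is closer in spirit to how such results are often presented in computational algebra texts (including the cited one), where one typically compares $\dim_{\Kbar}\Kbar[z]/J\Kbar[z]$ with $\lvert\variety{J}\rvert$ directly.
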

The notion \emph{square-free} implies that when a polynomial is
decomposed into its unique factorization, there are no repeated
factors. For example, $(z_1^2 + z_2)(z_1^4 + 2z_3 + 3)$ is square-free,
but~$(z_1^2 + z_2)(z_1^4 + 2z_3 + 3)^3$ is not.
In particular, Lemma
\ref{lem_kreuzer_rad} implies that ideals containing
$z_i^2 - z_i = z_i(z_i - 1)$ in each variable (i.e., boolean
ideals) are radical.

In this work, we will make heavy use of one of the most prominent
theorems of algebra, namely Hilbert's Nullstellensatz.
\begin{theorem}[Hilbert's Nullstellensatz]
    \label{thm:HilbertsNullstellensatz}
    Let $\K$ be a field (not necessarily real, as assumed everywhere else),
    $P=\K[z_1, \dots, z_n]$ a
    polynomial ring, $I \subseteq P$ an ideal and $f \in P$.
    If $f(z^\ast)=0$ for all $z^\ast\in\variety{I}$, then there is a
    non-negative integer~$r$ with $f^r \in I$.
\end{theorem}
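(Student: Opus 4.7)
The plan is to prove this via the classical \emph{Rabinowitsch trick}, which reduces the strong Nullstellensatz to its weak form (the assertion that a proper ideal in a polynomial ring over an algebraically closed field has a non-empty variety). First I would dispose of the trivial case $f = 0$ by taking $r = 1$, and then pass to the enlarged polynomial ring $P' = \K[z_1, \dots, z_n, y]$ in a fresh variable~$y$.

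Inside $P'$ I would form the ideal $J = IP' + (1 - yf)$ and argue that its variety $\variety{J} \subseteq \Kbar^{n+1}$ is empty. Indeed, any point $(z^\ast, y^\ast) \in \variety{J}$ would satisfy $g(z^\ast) = 0$ for every $g \in I$, hence $z^\ast \in \variety{I}$, which by hypothesis forces $f(z^\ast) = 0$; but then $1 - y^\ast f(z^\ast) = 1 \neq 0$, a contradiction.

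With $\variety{J} = \emptyset$ in hand, I would apply the weak Nullstellensatz over $\Kbar$ to conclude that the extension of $J$ to $\Kbar[z_1, \dots, z_n, y]$ is improper, and a short descent via linear algebra over a $\K$-basis of the relevant $\Kbar$-coefficients shows $1 \in J$ inside $P'$ itself. Thus
\[
1 = \sum_{i} g_i(z,y)\, p_i(z) + h(z,y)\bigl(1 - y f(z)\bigr)
\]
with $p_i \in I$ and $g_i, h \in P'$. Working now in the rational function field $\K(z_1, \dots, z_n)$, I would substitute $y = 1/f(z)$, which annihilates the last summand, and then multiply through by a sufficiently large power $f^r$ (with $r$ at least the highest $y$-degree appearing among the $g_i$) to clear denominators. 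This yields an identity $f^r = \sum_i \widetilde g_i(z)\, p_i(z)$ inside $P$, witnessing $f^r \in I$.

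The main obstacle is not the Rabinowitsch manipulation, which is essentially bookkeeping, but the weak Nullstellensatz on which it rests. I would regard that as the substantive input to cite: its standard proof goes through Noether normalization and Zariski's lemma (a field that is finitely generated as an algebra over a subfield is algebraic over it), and without this input the entire strategy collapses. The descent from $\Kbar$ down to $\K$ is a secondary subtlety one must not forget, but it is straightforward once framed as a rank computation of $\K$-spans inside $\Kbar$.
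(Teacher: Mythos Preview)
The paper does not supply a proof of this theorem; it is quoted as a classical result (with the surrounding Remark~\ref{remark:HilbertsNullstellensatz} recording the consequences actually used later), so there is no in-paper argument to compare against. Your Rabinowitsch-trick sketch is the standard route and is correct as written, including the care you take over the descent from $\Kbar$ to $\K$ and the identification of the weak Nullstellensatz as the genuine input.
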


\begin{remark}\label{remark:HilbertsNullstellensatz}
    Our set-up implies the following:
    \begin{itemize}
        \item If the ideal~$I$ is radical, then
        $f(z^\ast)=0$ for all $z^\ast\in\variety{I}$ implies $f \in I$.
        \item If $I = \langle f_1,\ldots, f_q \rangle$ for some $f_1$,
        \dots, $f_q \in P$, then it suffices to check all $z^\ast$ that are 
        common
        zeros of $f_1$, \dots, $f_q$ (over the algebraic closure
        $\Kbar$) instead of all $z^\ast\in\variety{I}$.
    \end{itemize}
    Therefore, if both assumptions are satisfied,
    then $f(z^\ast)=0$ for all $z^\ast$ that
    are common zeros of $f_1$, \dots, $f_q$ (over the algebraic closure
    $\Kbar$) implies $f \in I$.
\end{remark}

We continue with our background by recalling the
necessary notation for sum-of-squares
for the ideal~$I$ of the polynomial ring~$P$.
As usual, we write $f \equiv h \mod I$ whenever $f = h + g$ for some $g \in I$.
\begin{definition}\label{definition:ellsos}
  Let $\ell$ be a non-negative integer. A polynomial
  $f \in P$ is called \emph{$\ell$-sum-of-squares modulo $I$} (or 
  \emph{$\ell$-sos modulo $I$}), if there exist polynomials $s_{1}$, \dots, $s_{t} 
  \in P$ with degrees
  $\deg{s_{i}} \leq \ell$ for all $i \in \set{1, \dots,t}$ and
  \begin{equation*}
    f \equiv \sum_{i=1}^t s_{i}^{2} \mod I.
  \end{equation*}
\end{definition}
In the context of real-valued polynomials as we have it,
algebraic identities like $f = \sum_{i=1}^t s_{i}^{2} + g$ for some $g\in I$,
are often referred to as
\emph{Positivstellensatz certificates of non-negativity}, and these
identities can be found via semidefinite programming, which is at the
heart of this project.
  We present a first example now and will describe precisely how these 
  certificates are obtained in
  Section~\ref{sec:methodology}.
\begin{example}\label{ex_sos}
  Let 
  $I = \big\langle z_1^2-z_1, z_2^2-z_2, z_1z_2 - 1\big\rangle$.
  In this case,
  \begin{align*}
    z_1 + z_2 - 2 &= (z_1 - z_2)^2 - (z_1^2-z_1) - (z_2^2-z_2) + 2(z_1z_2-1) \\ 
    &\equiv (z_1 - z_2)^2 \mod I
  \end{align*}
  and the polynomial $z_1 + z_2 - 2$ is said to be $1$-sos modulo~$I$. 
  The certificate consists of the single polynomial $s_1=z_1-z_2$.
\end{example}  
  
It is well-known that not all non-negative polynomials can be
expressed as a sum-of-squares. However, in the particular case when the
ideal is radical and the variety is finite, we can state the following.
\begin{lemma}\label{lem:NonNegVarEquivEllSos} 
  Given a radical ideal $I$ with a finite real variety and a
  polynomial~$f$ with $f(\variety{I}) \subseteq \R$.
  Then $f$ is non-negative on the variety, i.e,
  $\forall z^\ast \in \variety{I} \colon f(z^\ast) \geq 0$, if and only if
  there exists a non-negative integer $\ell$ such that $f$ is $\ell$-sos
  modulo~$I$.
\end{lemma}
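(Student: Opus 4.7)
The plan is to prove both directions separately, with the reverse direction being essentially immediate and the forward direction relying on Lagrange-style interpolation combined with Hilbert's Nullstellensatz (Theorem~\ref{thm:HilbertsNullstellensatz}).

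For the ``if'' direction, I would substitute any $z^\ast \in \variety{I}$ into a congruence $f \equiv \sum_{i=1}^t s_i^2 \mod I$. The ideal contribution vanishes by definition of $\variety{I}$, and what remains is a sum of real squares since $\variety{I} \subseteq \R^n$, so $f(z^\ast) \geq 0$.

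For the ``only if'' direction, the idea is to build a single square $s^2$ that agrees with $f$ on the (finite) variety, making the certificate even a $1$-term sum of squares. Enumerate $\variety{I} = \set{z^{(1)}, \dots, z^{(m)}}$ and set $\alpha_j = \sqrt{f(z^{(j)})} \in \R_{\geq 0}$, well-defined by the non-negativity assumption. Since the points are finitely many and distinct, I would invoke multivariate Lagrange interpolation to produce a polynomial $s$ with $s(z^{(j)}) = \alpha_j$ for every $j$ (concretely, choosing polynomials~$\ell_j$ with $\ell_j(z^{(i)}) = \delta_{ij}$ and setting $s = \sum_j \alpha_j \ell_j$). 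Then $s^2 - f$ vanishes on $\variety{I}$, and since $I$ is radical, the vanishing suffices to conclude $s^2 - f \in I$ (this is exactly the content of Remark~\ref{remark:HilbertsNullstellensatz}). Hence $f \equiv s^2 \mod I$, and one may take $\ell = \deg s$ with $t=1$ in Definition~\ref{definition:ellsos}.

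The main subtlety---which I would expect to be essentially the only real obstacle---is that the interpolating polynomial is naturally built over $\R$, whereas Definition~\ref{definition:ellsos} demands coefficients in $\K \subseteq \R$. I would resolve this either by observing that the lemma is applied in this paper with $\K = \R$ (so that the square roots $\alpha_j$ and the Lagrange basis already live in $\K$), or, for a general $\K$, by grouping Galois-conjugate points of $\variety{I}$ so that the interpolant inherits coefficients in~$\K$ and then appealing to $\K\subseteq\R$ to keep the squares non-negative. Everything else in the argument is routine once the interpolation step and the ``radical $+$ Nullstellensatz'' reduction are in place.
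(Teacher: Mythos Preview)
Your proof is correct and proceeds along the same lines as the paper---interpolation on the finite variety, then Nullstellensatz for the radical ideal---but with one genuine simplification. The paper keeps the Lagrange basis polynomials~$f_i$ separate and writes $f \equiv \sum_{i=1}^t \bigl(f_i\sqrt{f(z_i^\ast)}\bigr)^2 \bmod I$, producing as many squares as there are points in the variety. You instead collapse everything into a single interpolant $s=\sum_j \sqrt{f(z^{(j)})}\,\ell_j$ and obtain a one-term certificate $f\equiv s^2 \bmod I$. Both arguments hinge on the identical ``vanishes on $\variety{I}$ plus radical $\Rightarrow$ in $I$'' step; your version is just more economical.

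Your closing remark about coefficients in $\K$ versus $\R$ is well taken---the paper's construction faces the same issue (the factors $\sqrt{f(z_i^\ast)}$ need not lie in~$\K$) and does not comment on it. For the applications in this paper the point is moot, but you are right to flag it.
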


\begin{proof}
  Let $f$ be a polynomial that can be expressed as a
  sum-of-squares modulo~$I$, say $f \equiv \sum_{i=1}^{t} s_{i}^{2} \mod I$.
  Since all polynomials in the ideal~$I$ vanish
  on the variety by definition and since the real-valued
  $\sum_{i=1}^{t} s_{i}^{2}$ is
  clearly non-negative, $f$ is non-negative on
  the variety~$\variety{I}$.

  To
  prove the other direction, we recall a well-known argument.
  Suppose we have a polynomial $f$ with $f(z^\ast) \geq 0$
  for all $z^\ast \in \variety{I}$. Suppose further that the
  finite variety~$\variety{I}$ equals $\set{z_1^\ast,\ldots,z_t^\ast}$
  for a suitable~$t$. We now construct $t$ interpolation polynomials $f_i$ for 
  $i \in \set{1, \dots, t}$
  (see~\cite{interp})
  such that
    \begin{align*}
    f_i(z^\ast) &= \begin{cases}
    1 & z^\ast= z_i^\ast,\\
    0 & z^\ast \neq z_i^\ast
    \end{cases}
    \end{align*}
    for all $z^\ast \in \variety{I}$.
    Observe that the square of an interpolating polynomial is again an
    interpolating polynomial. Therefore, the difference polynomial
    \begin{equation}\label{eq:NonNegVarEquivEllSos:diff-poly}
    f(z) - \sum_{i = 1}^t f_i^2(z)f(z_i^\ast)
    \end{equation}
    vanishes on every point~$\set{z_1^\ast,\ldots,z_t^\ast}$ in the variety.
    We now use Hilbert's Nullstellensatz
    (Theorem~\ref{thm:HilbertsNullstellensatz}): Since the ideal~$I$ is
    radical, we apply
    Remark~\ref{remark:HilbertsNullstellensatz} on
    the difference polynomial~\eqref{eq:NonNegVarEquivEllSos:diff-poly}
    and get that it is in~$I$.
    Therefore, if we let
    \begin{equation*}
    s_i =   f_i(z)\sqrt{f(z_i^\ast)},
    \end{equation*}
    we then see that
    \begin{equation*}
    f \equiv \sum_{i=1}^t s_i^2 \mod~I.
    \end{equation*}
\end{proof}
    We observe that the $\ell$ in this case is quite large, since it is
    the degree of the interpolating polynomial $f_i$, which depends on the
    number of points in the variety. However, we will rely on the fact that
    the sum-of-squares representation is not unique, and there may
    exist Positivstellensatz certificates of much lower degree, within
    reach of computation. As we will see in Section~\ref{sec:proof_of_kG=nG_and_kH=nH-1} 
    and~\ref{sec:certificates_kGIsnGMinus1_kHEqnHMinus1}, 
    this does indeed turn out to be the case.

  \section{Vizing's Conjecture as a Sum-of-Squares Problem} \label{sec_poly_I}
  In this section, we describe Vizing's conjecture as a sum-of-squares
  problem. Towards that end, we will first define ideals associated
  with graphs $G$, $H$ and $G \Box H$, and then finally describe an
  ideal/\allowbreak polynomial pair where the polynomial is non-negative on the
  variety of the ideal if and only if Vizing's conjecture is true. We begin by
  creating an ideal where the variety of solutions
  corresponds to graphs with a given number of vertices and size of a
  minimum dominating set.

  The notation underlying all of the definitions in this section---we
  will use it also through the whole article---is as
  follows. 
  Let $n_\cG$ and $k_\cG \leq n_\cG$ be fixed positive integers, and 
  let $\cG$ be the class of graphs on $n_\cG$ vertices with a fixed\footnote{We
    fix the vertices of the dominating set without loss of generality
    as this corresponds to a simple renaming of the vertices.
    Doing this avoids the introduction of additional boolean variables for the
    vertices and reduces the size of the corresponding
    isomorphism group of the variety.
    It is therefore algorithmically favorable.} 
  minimum dominating set $D_\cG$ 
  of size $k_{\cG}$.
  We then turn the various edges ``on'' or ``off''
  (by controlling a boolean variable~$e_{gg'}$ for each possible edge~$\set{g,g'}$)
  such that each point in the
  variety corresponds to a \emph{specific} graph $G \in \cG$.

  \begin{definition}\label{def:IG}
    Set $e_\cG = \set{e_{gg'} \colon \set{g,g'} \subseteq V(\cG)}$.
  The ideal $I_\cG \subseteq P_\cG = \K[e_\cG]$
  is defined by the system of polynomial equations%
  \footnote{Being precise, the ideal~$I_\cG$ is defined by the
    polynomials on the left-hand side of the
    equations~\eqref{eq:eij-fix}, \eqref{eq:domset-fix}
    and~\eqref{eq:kcover-fix}.
    However, we 
    think that the current phrasing provides the better insight and
    is closer to the intended way of thinking for this work.
    
    If one would like to write equations in a formally correct way,
    one first needs to evaluate the polynomial on the left-hand side
    at some suitable point,
    meaning to substitute the variables of the polynomial ring by real values.
    For example, the variable~$e_{gg'} \in P_\cG$ is substituted by
    some (possibly a priori unknown)~$e_{gg'}^\ast \in \R$,
    therefore the polynomial on the left-hand side
    of~\eqref{eq:eij-fix} becomes the equation
    $(e_{gg'}^\ast)^2 - e_{gg'}^\ast = 0$.
    Notation~\ref{notation:poly-vs-evaluated-star-world} is also related
    to this issue and brings the connection to the points in the
    associated variety.

    We will, however, always be precise when the distinction between
    variable and evaluated (starred) form matters.}
  \begin{subequations}
    \label{eq:graphs-G-fix}
    \begin{align}
      e_{gg'}^2 - e_{gg'} &= 0 &
      &\text{for $\set{g,g'}\subseteq V(\cG)$,} \label{eq:eij-fix}\\
      \prod_{g' \in D_\cG} (1 - e_{gg'}) &= 0 &
      &\text{for $g \in V(\cG) \setminus D_\cG$,} \label{eq:domset-fix}\\
      \prod_{g' \in V(\cG) \setminus S} \parentheses[\bigg]{\;\sum_{ g \in S}e_{gg'}} &= 0 &
      &\text{for $S \subseteq V(\cG)$ where $\card{S} = k_\cG-1$.}\label{eq:kcover-fix}
    \end{align}
  \end{subequations}
\end{definition}

\begin{notation}\label{notation:poly-vs-evaluated-star-world}
  Throughout this paper, we will use the following notations: 
  We will use~$z$ for the tuple of variables
  of the polynomial ring~$P$, so $P=\K[z]$.
  When considering the variety $\variety{I}$ associated to an
  ideal~$I \subseteq P$, we use the notation $z^\ast \in \variety{I}$
  for the elements in this variety. 
  
  Note that the polynomial ring variables
  (which are the components of~$z$) correspond bijectively to the
  components of $z^\ast$.
  In particular we will use $e_{gg'}^\ast$ for the component of
  $z^\ast=e_\cG^\ast \in \variety{I_\cG}$ corresponding to the polynomial ring
  variable~$e_{gg'} \in P_\cG$.
\end{notation}

\begin{remark}
  Definition~\ref{def:IG} is meaningful even in the case that
  $n_\cG = 1$. The only vertex must be in the dominating set,
  so~$k_\cG=1$. Pairs~$\set{g,g'}$ cannot be chosen from the
  one-element set~$V(\cG)$, thus the set of variables $e_\cG$ is
  empty. This implies $P_\cG = \K[e_\cG]$ is the polynomial ring
  over~$\K$ in no variables (i.e., isomorphic to~$\K$).

  The polynomials defining the ideal~$I_\cG$ disappear:
  There are no polynomials coming
  from~\eqref{eq:eij-fix} again because of non-existing
  pairs~$\set{g,g'}$. Also, there are no polynomials coming
  from~\eqref{eq:domset-fix} as $V(\cG) \setminus D_\cG$ is empty
  because both $V(\cG)$ and $D_\cG$ consist exactly of the same
  vertex. There is a contribution from~\eqref{eq:kcover-fix} for $S$
  being the empty set, however this is the equation $0=0$, so again no
  true contribution. Thus, the ideal~$I_\cG \subseteq P_\cG$ only
  consists of $0$.

  This, in turn, means that the variety~$\variety{I_\cG}$ is ``full''
  meaning in our particular situation being the set containing the
  empty tuple only.

\end{remark}

\begin{theorem}\label{thm:BijectionVarietyGraphsG}
  The points in the variety $\variety{I_\cG}$ 
  are in bijection to
  the graphs in $\cG$. 
\end{theorem}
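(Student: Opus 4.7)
The plan is to define an explicit map $\phi \colon \cG \to \variety{I_\cG}$ sending each graph $G \in \cG$ to its edge indicator tuple $e^\ast = (e_{gg'}^\ast)_{\set{g,g'} \subseteq V(\cG)}$, where $e_{gg'}^\ast = 1$ if $\set{g,g'} \in E(G)$ and $e_{gg'}^\ast = 0$ otherwise, and then to show that $\phi$ is a well-defined bijection. Note that since $V(\cG)$ and $D_\cG$ are fixed beforehand, the edge tuple completely determines the graph, so injectivity of $\phi$ will be immediate.

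First I would verify that $\phi(G) \in \variety{I_\cG}$ by checking that each defining polynomial vanishes at $e^\ast$. Equation~\eqref{eq:eij-fix} vanishes since every $e_{gg'}^\ast \in \set{0,1}$ by construction. For \eqref{eq:domset-fix}, since $D_\cG$ is a dominating set in~$G$, every $g \in V(\cG) \setminus D_\cG$ has a neighbor $g' \in D_\cG$, so the factor $1 - e_{gg'}^\ast = 0$ appears in the corresponding product. For \eqref{eq:kcover-fix}, minimality of~$D_\cG$ implies that no set $S \subseteq V(\cG)$ with $\card{S} = k_\cG - 1$ is a dominating set; therefore some $g' \in V(\cG) \setminus S$ has no neighbor in $S$, which means $\sum_{g \in S} e_{gg'}^\ast = 0$ and the whole product vanishes.

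For surjectivity, given any $e^\ast \in \variety{I_\cG} \subseteq \Kbar^{\card{e_\cG}}$, I would define a graph $G_{e^\ast}$ on vertex set $V(\cG)$ whose edges are exactly those $\set{g,g'}$ with $e_{gg'}^\ast = 1$. Equation~\eqref{eq:eij-fix} forces $e_{gg'}^\ast \in \set{0,1}$ even over $\Kbar$, so the graph (and hence the candidate preimage under $\phi$) is well-defined and in fact the variety is automatically real. Equation~\eqref{eq:domset-fix} then translates directly into the statement that $D_\cG$ is a dominating set in $G_{e^\ast}$. Finally, \eqref{eq:kcover-fix} tells us that no subset of $V(\cG)$ of size exactly $k_\cG - 1$ dominates $G_{e^\ast}$, and I would combine this with the trivial observation that dominating sets are closed under taking supersets to conclude that no dominating set of size strictly smaller than $k_\cG$ can exist either; together with the existence of $D_\cG$ this pins down $\gamma(G_{e^\ast}) = k_\cG$, so $G_{e^\ast} \in \cG$ and $\phi(G_{e^\ast}) = e^\ast$.

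None of these checks involves a genuine difficulty, so there is no real obstacle; the argument is essentially a dictionary between the three types of defining polynomials and the three defining properties of $\cG$. The one step that deserves a careful sentence rather than a one-liner is the minimality translation: \eqref{eq:kcover-fix} only excludes dominating sets of size exactly $k_\cG - 1$, while membership in $\cG$ demands no dominating set of size less than $k_\cG$ at all. The superset-closure remark bridges this gap and is the only place in the proof where the reader might otherwise pause.
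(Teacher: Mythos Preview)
Your proposal is correct and follows essentially the same approach as the paper: both proofs set up the dictionary between the three families of defining polynomials \eqref{eq:eij-fix}--\eqref{eq:kcover-fix} and the three defining properties of~$\cG$ (boolean edges, $D_\cG$ dominates, $D_\cG$ is minimum). Your write-up is in fact more careful than the paper's on the one subtle point you flag---that \eqref{eq:kcover-fix} only excludes dominating sets of size exactly $k_\cG-1$ and one needs superset-closure to rule out smaller ones---which the paper leaves implicit.
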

\begin{proof}
  For $n_\cG = k_\cG = 1$ this is clearly true, as there is exactly one element in both $\variety{I_\cG}$
  and $\cG$.
    
  For $n_\cG \geqslant 2$ consider any point $z^\ast \in \variety{I_\cG}$.  
  We use Notation~\ref{notation:poly-vs-evaluated-star-world}. 
  Since
  equations~\eqref{eq:eij-fix} turn the edges ``on'' ($e_{gg'}^\ast = 1$) or
  ``off'' ($e_{gg'}^\ast = 0$), the point $z^\ast$ defines a graph~$G$ in
  $n_{\cG}$ vertices. Equations~\eqref{eq:domset-fix} iterate over all the
  vertices inside the set $D_\cG$, and ensure that for each vertex 
  outside the set at least one
  edge from a vertex inside the set to this vertex  
  is ``on''. Therefore, $D_\cG$ is a dominating set. Finally,
  equations~\eqref{eq:kcover-fix} iterate over all sets $S$ of size
  $k_\cG-1$ and ensure that at least one vertex outside~$S$ is
  not incident to any vertex inside~$S$ for any~$S$.
  Therefore, no set $S$ of size $k_\cG - 1$ is a dominating
  set. Thus, every point $z^\ast \in \variety{I_{\cG}}$ corresponds
  to a graph $G$ on $n_\cG$ vertices with a minimum dominating set of
  size $k_\cG$.
  
  With the intuition given above 
  it is straight forward to construct a point in $\variety{I_\cG}$ for 
  a graph on $n_\cG$ vertices with a minimum dominating set of size $k_\cG$.
\end{proof}

Similarly, for fixed positive integers $n_\cH$ and $k_\cH \leq n_\cH$, let
$\cH$ be the class of graphs on $n_\cH$
vertices and a minimum dominating set of size $k_\cH$. 
Again, we fix
the dominating set to some $D_\cH$ to
reduce isomorphisms within the variety.
Furthermore let 
the ideal $I_\cH$ be defined on the polynomial
ring~$P_\cH=\K[e_\cH]$ with $e_\cH = \set{e_{hh'} \colon \set{h,h'} \subseteq V(\cH)}$
such that the
solutions in the variety~$\variety{I_\cH}$ are in bijection to the graphs
in $\cH$.

Next, we define the graph class~$\cG \Box \cH$ and the ideal $I_{\cG \Box \cH}$.
For the above classes~$\cG$ and~$\cH$, the graph class~$\cG \Box \cH$ is
the set of product graphs $G \Box H$ for $G\in\cG$ and $H\in\cH$.
The new variables needed
for the ideal are the variables corresponding to the vertices in the product
graph and indicate if such a vertex is in the dominating set or not. Let
$x_{\cG\Box\cH}=\set[\big]{x_{gh} \colon g \in V(\cG),\, h \in V(\cH)}$ and
set $P_{\cG\Box\cH} = \K[e_\cG \cup e_\cH \cup x_{\cG\Box\cH}]$.

\begin{definition}\label{def:IGH}
  The ideal
  $I_{\cG \Box \cH} \subseteq P_{\cG\Box\cH}$ is defined by the
  system of polynomial equations
  \begin{subequations}
    \label{eq:graphs-GH}
    \begin{align}
      x_{gh}^2 - x_{gh} &= 0, \label{eq:xgh}\\
      \bigl(1-x_{gh}\bigr)
      \parentheses[\bigg]{
        \prod_{\substack{g' \in V(\cG) \\ g' \neq g}}
        \parentheses[\big]{1 - e_{gg'} x_{g'h}}}
      \parentheses[\bigg]{
        \prod_{\substack{h' \in V(\cH) \\ h' \neq h}}
        \parentheses[\big]{1 - e_{hh'} x_{gh'}}} &= 0, \label{eq:xgh_domset}
    \end{align}
  \end{subequations}
 for $g \in V(\cG)$ and $h \in V(\cH)$.
\end{definition}
Observe that we have no restrictions on the edge variables in this
definition. It is only used as a stepping
stone to the final and most important ideal in our polynomial model.

\begin{definition}\label{def:Isos}
  For graph classes~$\cG$ and $\cH$, we set $\Iviz$ to be the ideal
  generated by the elements of $I_\cG$, $I_\cH$ and $I_{\cG\Box\cH}$.
\end{definition}
Note that our definition of~$\Iviz$ depends on the specific parameters~$n_\cG$, 
$n_\cH$, $k_\cG$ and $k_\cH$.
\begin{notation}\label{notation:poly-vs-evaluated-star-world-2}
    Analogously to Notation~\ref{notation:poly-vs-evaluated-star-world} we will 
    write $z^\ast \in \variety{\Iviz}$ for the elements of the variety of 
    $\Iviz$. 
    We will use
    $e_{gg'}^\ast$, $e_{hh'}^\ast$ and $x_{gh}^\ast$ for the component of $z^\ast$
    corresponding to the polynomial ring
    variables~$e_{gg'} \in P_\cG$, $e_{hh'} \in P_\cH$ and 
    $x_{gh} \in P_{\cG\Box\cH}$ respectively.
\end{notation}

\begin{theorem}\label{def:IGHsdp}
  The points in
  the variety $\variety{\Iviz}$ are in bijection to the triples  $(G,H,D)$ where
  $G$ is a graph in $\cG$,
  $H$ is a graph in $\cH$ and
  $D$ is a dominating set of any size of $G\Box H$.
\end{theorem}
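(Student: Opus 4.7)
The plan is to reduce this to Theorem~\ref{thm:BijectionVarietyGraphsG} applied twice, and then analyze how the additional generators from~$I_{\cG\Box\cH}$ restrict the $x_{gh}$-coordinates of a point in the variety. Since the generators of $I_\cG$ involve only the variables~$e_\cG$, those of $I_\cH$ involve only~$e_\cH$, and the generators~\eqref{eq:xgh} and~\eqref{eq:xgh_domset} are polynomial identities that treat the $e$-variables merely as coefficients, the ideal $\Iviz$ decouples nicely along the three blocks of variables. I would therefore first show that any $z^\ast \in \variety{\Iviz}$ projects to a point $e_\cG^\ast \in \variety{I_\cG}$ and $e_\cH^\ast \in \variety{I_\cH}$, which by Theorem~\ref{thm:BijectionVarietyGraphsG} determines uniquely a graph $G \in \cG$ and $H \in \cH$.

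Next I would interpret the remaining coordinates. Equations~\eqref{eq:xgh} force $x_{gh}^\ast \in \set{0,1}$, so the set $D = \set{(g,h) \colon x_{gh}^\ast = 1}$ is a well-defined subset of $V(G \Box H)$. The task then reduces to showing that~\eqref{eq:xgh_domset} is equivalent, under this identification, to $D$ being a dominating set of $G \Box H$. For each fixed vertex $(g,h)$ I would observe that, since all the occurring values lie in $\set{0,1}$, the factor $1 - e_{gg'}^\ast x_{g'h}^\ast$ vanishes precisely when $\set{g,g'} \in E(G)$ and $(g',h) \in D$, and analogously for the factor $1 - e_{hh'}^\ast x_{gh'}^\ast$. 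Recalling the definition of $E(G \Box H)$, these factors correspond bijectively to the two families of neighbors of $(g,h)$ in the product graph. Hence the product in~\eqref{eq:xgh_domset} vanishes if and only if either $(g,h) \in D$ (via the $1-x_{gh}$ factor) or some neighbor of $(g,h)$ in $G \Box H$ lies in $D$, which is exactly the domination condition at~$(g,h)$.

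Conversely, given a triple $(G,H,D)$ with $G \in \cG$, $H \in \cH$ and $D$ any dominating set of $G \Box H$, I would construct the corresponding $z^\ast$ by first invoking Theorem~\ref{thm:BijectionVarietyGraphsG} to obtain the $e_\cG^\ast$ and $e_\cH^\ast$ parts, and then setting $x_{gh}^\ast = 1$ if $(g,h) \in D$ and $0$ otherwise. The analysis above, read in reverse, guarantees that all generators of $\Iviz$ vanish at this point, and the map $z^\ast \mapsto (G,H,D)$ and its inverse are visibly two-sided inverses, giving the bijection.

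The main obstacle is the careful case analysis showing the equivalence between equation~\eqref{eq:xgh_domset} and the dominating-set property at each vertex; nothing here is technically hard, but one has to check that the three different ways the product can vanish (the initial factor $1-x_{gh}$, a $G$-type factor, or an $H$-type factor) together cover precisely the three ways a vertex can be dominated in the Cartesian product, and no spurious solutions arise because all variables are boolean.
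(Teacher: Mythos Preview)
Your proposal is correct and follows essentially the same approach as the paper's proof: reduce to Theorem~\ref{thm:BijectionVarietyGraphsG} for the $e_\cG$ and $e_\cH$ parts, then argue that the boolean constraints~\eqref{eq:xgh} together with~\eqref{eq:xgh_domset} encode exactly the dominating-set condition on the Cartesian product. If anything, you are slightly more explicit than the paper about the projection/decoupling structure and about the inverse map, but the argument is the same.
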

\begin{proof}
  We have already demonstrated in Theorem~\ref{thm:BijectionVarietyGraphsG} that
  $\variety{I_\cG}$, $\variety{I_\cH}$ are in bijection to the
  graphs in $n_\cG$, $n_\cH$ vertices with minimum dominating sets of
  size $k_\cG$, $k_\cH$ respectively. It remains to investigate the
  restrictions placed on the $x_{gh}$ variables, which denote whether
  or not the vertex $gh \in V(\cG \Box \cH)$ appears in the dominating set of the product
  graph.
  
  Let $z^\ast \in \variety{\Iviz}$ be a point in the variety.
  We use Notation~\ref{notation:poly-vs-evaluated-star-world}. 
  With the arguments above this point induces a graph $G \in \cG$ and a graph 
  $H \in \cH$.
  Furthermore equations~\eqref{eq:xgh} force the vertex variables
  $x_{gh}$ to evaluate to ``on'' ($x_{gh}^\ast = 1$) or ``off'' 
  ($x_{gh}^\ast = 0$). We define $D$ such that the vertex $gh$ is in $D$ if 
  $x_{gh}^\ast = 1$ and is outside $D$
  otherwise. Equations~\eqref{eq:xgh_domset} ensure that $D$ is a dominating 
  set, because every vertex $gh$ is dominated. 
  Indeed, it is either in the set itself (i.e., $1 - x_{gh}^\ast = 0$),
  or it is adjacent to a vertex in the dominating set $D$ via an edge from
  the underlying graph in $\cG$ or the underlying graph in $\cH$. In
  particular, the edge $\set{g,g'}$ is ``on'' and the vertex $g'h$ is
  in the dominating set ($e_{gg'}^\ast = 1$ and $x_{g'h}^\ast=1$), or the edge 
  $\set{h,h'}$ is ``on'' and the vertex
  $gh'$ is in the dominating set
  ($e_{hh'}^\ast = 1$ and $x_{gh'}^\ast=1$). In either of these cases, the
  vertex $gh$ of the box graph is dominated. Therefore, the points in the variety
  $\variety{\Iviz}$ are in bijection to the graphs in
  $n_\cG$, $n_\cH$ vertices with minimum dominating sets of size
  $k_\cG$, $k_\cH$ respectively, and their corresponding product graph
  with a dominating set $D$ of any size.
  
  With the intuition given above 
  it is straight forward to construct a point in $\variety{I_\cG}$ for graphs 
  $G$, $H$ and a dominating set $D$ in $G\Box H$. 
\end{proof}
Observe that there are no polynomials in $\Iviz$ enforcing minimality
on the dominating set in the product graph. This is essential when we
tie all of the definitions together and model Vizing's
conjecture as a sum-of-squares problem. In particular, we model
Vizing's conjecture as an ideal/polynomial pair, where the polynomial
must be non-negative on the variety associated with the ideal if and
only if Vizing's conjecture is true.

\begin{definition}\label{def:fStar}
  Given the graph classes $\cG$ and $\cH$, define
  \begin{equation*}
    \fviz
    = \biggl(\, \sum_{gh \in V(\cG) \times V(\cH)} x_{gh} \biggr) - k_\cG k_\cH.
  \end{equation*}
\end{definition}
\begin{theorem}\label{proposition:VizSDPConj}
  Vizing's conjecture is true if and only if for all positive integer values of $n_\cG$, $k_\cG$, $n_\cH$ and $k_\cH$ with $k_\cG \le n_\cG$ and $k_\cH \le n_\cH$, 
   $\fviz$ is non-negative on~$\variety{\Iviz}$, i.e.,
  \begin{equation*}
    \forall z^\ast \in \variety{\Iviz} \colon \fviz(z^\ast) \geq 0.
  \end{equation*}
\end{theorem}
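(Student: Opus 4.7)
The plan is to reduce this statement directly to the bijection already established in Theorem~\ref{def:IGHsdp}; once that bijection is in hand, both directions become essentially unpackings of definitions. First I would observe that if $z^\ast \in \variety{\Iviz}$ corresponds to a triple $(G,H,D)$ under Theorem~\ref{def:IGHsdp}, then \eqref{eq:xgh} forces each $x_{gh}^\ast$ to be $0$ or $1$, and the proof of that theorem identifies the vertices $gh$ with $x_{gh}^\ast = 1$ as exactly those in $D$. Hence $\fviz(z^\ast) = |D| - k_\cG k_\cH$, and non-negativity of $\fviz$ on $\variety{\Iviz}$ for a fixed parameter tuple $(n_\cG, k_\cG, n_\cH, k_\cH)$ is equivalent to: every dominating set $D$ of $G \Box H$, for every $G \in \cG$ and $H \in \cH$, satisfies $|D| \geq k_\cG k_\cH$. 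Since $\gamma(G \Box H)$ is the minimum such $|D|$, and $k_\cG = \gamma(G)$, $k_\cH = \gamma(H)$ by the very definitions of $\cG$ and $\cH$, this is exactly the statement of Vizing's conjecture restricted to the class of graph pairs with these parameters.

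For the forward direction I would assume Vizing's conjecture, fix an arbitrary allowed tuple $(n_\cG, k_\cG, n_\cH, k_\cH)$ and any $z^\ast \in \variety{\Iviz}$, pull back to the associated triple $(G,H,D)$, and chain the inequalities $|D| \geq \gamma(G \Box H) \geq \gamma(G)\gamma(H) = k_\cG k_\cH$ to conclude $\fviz(z^\ast) \geq 0$.

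For the reverse direction I would start from arbitrary graphs $G_0$ and $H_0$ and instantiate the parameters as $n_\cG := |V(G_0)|$, $k_\cG := \gamma(G_0)$, and analogously for $H_0$. After a vertex relabeling (which preserves all relevant graph-theoretic quantities) I may arrange that $D_\cG$ and $D_\cH$ are actual minimum dominating sets of $G_0$ and $H_0$, placing these graphs into $\cG$ and $\cH$. Taking a minimum dominating set $D^\ast$ of $G_0 \Box H_0$, Theorem~\ref{def:IGHsdp} provides a corresponding $z^\ast \in \variety{\Iviz}$ for the triple $(G_0, H_0, D^\ast)$; applying the assumed non-negativity then gives $\gamma(G_0 \Box H_0) = |D^\ast| \geq k_\cG k_\cH = \gamma(G_0)\gamma(H_0)$, which is Vizing's conjecture for the arbitrary pair $(G_0, H_0)$.

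There is no serious obstacle: all the combinatorial content is already packaged inside Theorem~\ref{def:IGHsdp}. The only mild subtlety, and what I would be careful about, is that the classes $\cG$ and $\cH$ are defined with \emph{specific} fixed dominating sets $D_\cG$, $D_\cH$ rather than merely fixing their sizes; this is what forces the vertex-relabeling step in the converse direction, and is consistent with the footnote accompanying Definition~\ref{def:IG}.
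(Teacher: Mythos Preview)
Your proposal is correct and follows essentially the same approach as the paper: both directions hinge on the bijection of Theorem~\ref{def:IGHsdp} and the identification $\fviz(z^\ast) = |D| - k_\cG k_\cH$. If anything, you are slightly more explicit than the paper in the reverse direction, spelling out the vertex-relabeling step needed to place an arbitrary pair $(G_0,H_0)$ into the classes $\cG$, $\cH$ with their fixed dominating sets.
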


\begin{proof}
  Assume that Vizing's conjecture is true, and
  fix the values of $n_\cG$, $k_\cG$, $n_\cH$ and $k_\cH$.
  Therefore, for all graphs
  $G\in\cG$ and $H\in\cH$, we have $\gamma(G \Box H) \geq \gamma(G)\gamma(H)$
  which is equivalent to $\gamma(G \Box H) - k_\cG k_\cH \geq 0$.
  The sum~$\sum_{gh \in V(\cG) \times V(\cH)} x_{gh}^\ast$ coming from $\fviz$
  equals exactly the size of the dominating
  set in the box graph $G\Box H$. Therefore, we have $\fviz(z^\ast) \geq 0$ 
  for all $z^\ast \in \variety{\Iviz}$.

  Similarly,
  if $\fviz(z^\ast) \geq 0$ for all $z^\ast \in \variety{\Iviz}$, every
  dominating set in $G \Box H$ has size at least $k_\cG k_\cH$.
  In particular, the minimum dominating set in $G \Box H$ has
  size at least $k_\cG k_\cH$ and Vizing's conjecture is true.
\end{proof}

\begin{corollary}\label{proposition:VizSDPlsos}
  Vizing's conjecture is true if and only if for all positive integer values of $n_\cG$, $k_\cG$, $n_\cH$ and $k_\cH$ with $k_\cG \le n_\cG$ and $k_\cH \le n_\cH$,
  there exists an integer $\ell$ such that $\fviz$
  is $\ell$-sos modulo $\Iviz$.
\end{corollary}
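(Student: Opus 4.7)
The plan is to combine Theorem~\ref{proposition:VizSDPConj}, which already characterizes Vizing's conjecture as non-negativity of $\fviz$ on the variety~$\variety{\Iviz}$, with Lemma~\ref{lem:NonNegVarEquivEllSos}, which characterizes non-negativity on a variety as existence of an $\ell$-sos certificate. So the corollary will follow immediately once the hypotheses of Lemma~\ref{lem:NonNegVarEquivEllSos} are verified for the fixed choice of parameters $n_\cG$, $k_\cG$, $n_\cH$, $k_\cH$; namely, that $\Iviz$ is radical, that $\variety{\Iviz}$ is finite and real, and that $\fviz$ takes real values on it.

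First, I would verify radicality via Lemma~\ref{lem_kreuzer_rad}. By Definition~\ref{def:Isos}, $\Iviz$ is generated by the generators of $I_\cG$, $I_\cH$ and $I_{\cG\Box\cH}$, which in particular include the boolean polynomials $e_{gg'}^2-e_{gg'}$, $e_{hh'}^2-e_{hh'}$ (from~\eqref{eq:eij-fix}) and $x_{gh}^2-x_{gh}$ (from~\eqref{eq:xgh}). Each of these is a univariate square-free polynomial $z(z-1)$ in the corresponding variable of $P_{\cG\Box\cH}$, so Lemma~\ref{lem_kreuzer_rad} applies, provided finiteness of the variety has already been checked.

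Second, I would check finiteness and realness of $\variety{\Iviz}$. The boolean polynomials above force every coordinate of every point $z^\ast \in \variety{\Iviz}$ to lie in $\set{0,1}$, so $\variety{\Iviz}$ is contained in $\set{0,1}^N$ where $N$ is the total number of variables in $P_{\cG\Box\cH}$. Hence $\variety{\Iviz}$ is finite and real. (Alternatively, Theorem~\ref{def:IGHsdp} already puts $\variety{\Iviz}$ in bijection with the finite set of triples $(G,H,D)$, which also yields finiteness.) Since $\fviz$ is a polynomial with integer coefficients evaluated at $\set{0,1}$-tuples, $\fviz(\variety{\Iviz}) \subseteq \R$ as well.

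With these ingredients in place, Lemma~\ref{lem:NonNegVarEquivEllSos} applied to $I=\Iviz$ and $f=\fviz$ yields that $\fviz$ is non-negative on $\variety{\Iviz}$ if and only if there exists a non-negative integer $\ell$ with $\fviz$ being $\ell$-sos modulo $\Iviz$. Quantifying this equivalence over all positive integers $n_\cG$, $k_\cG\le n_\cG$, $n_\cH$, $k_\cH\le n_\cH$ and invoking Theorem~\ref{proposition:VizSDPConj} gives exactly the statement of the corollary. There is no real obstacle here; the only point requiring a moment of care is making sure that the boolean generators of $\Iviz$ are indeed present, so that both the radicality criterion of Lemma~\ref{lem_kreuzer_rad} and the confinement of $\variety{\Iviz}$ to $\set{0,1}^N$ go through.
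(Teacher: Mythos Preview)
Your proposal is correct and follows essentially the same approach as the paper: verify that $\Iviz$ is radical via Lemma~\ref{lem_kreuzer_rad} using the boolean generators, that $\variety{\Iviz}$ is finite and real (the paper invokes Theorem~\ref{def:IGHsdp} for finiteness, you use the $\set{0,1}^N$ argument directly), then apply Lemma~\ref{lem:NonNegVarEquivEllSos} and Theorem~\ref{proposition:VizSDPConj}. You are in fact slightly more careful than the paper in noting that finiteness of the variety is a hypothesis of Lemma~\ref{lem_kreuzer_rad} and should be checked before invoking it.
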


\begin{proof}
  The ideal $\Iviz$ contains the univariate polynomial $z^2 - z$ for
  each variable. Therefore, by Lemma~\ref{lem_kreuzer_rad}, $\Iviz$ is
  radical. Due to Lemma~\ref{def:IGHsdp}, the variety~$\variety{\Iviz}$ is
  finite and obviously it is real. Therefore, by 
  Lemma~\ref{lem:NonNegVarEquivEllSos}, we know
  that a polynomial is non-negative on $\variety{\Iviz}$, if and only if there
  exists an integer $\ell$ such that the polynomial is $\ell$-sos modulo
  $\Iviz$.
  Hence the result follows from Theorem~\ref{proposition:VizSDPConj}.
\end{proof}

In this section, we have drawn a parallel between Vizing's conjecture
and a sum-of-squares problem. We defined the ideal/polynomial pair
$(\Iviz$, $ \fviz)$ such that
$\fviz(z^\ast) \geq 0$ for all $z^\ast \in \variety{\Iviz}$ if and only if
Vizing's conjecture is true. In the next section, we describe
how to find these Positivstellensatz certificates of non-negativity,
or equivalently, these Positivstellensatz certificates that Vizing's conjecture is
true.

\section{Methodology}
\label{sec:methodology}

\subsection{Overview of the Methodology}
In our approach to Vizing's conjecture we ``partition'' the graphs $G$, $H$ and $G \Box H$ by their sizes (number of vertices) $n_\cG$ and $n_\cH$ and by the sizes of
their dominating sets $k_\cG$ and $k_\cH$. 
Note that we aim
for certificates for all partitions as this would prove the conjecture.
However in the following we present
our method which works for a fixed partition (i.e., for fixed values of
$n_\cG$, $k_\cG$, $n_\cH$ and $k_\cH$), and only later relax this and
generalize to parametrized partitions. 

The outline is as follows:

\begin{itemize}
\item Step 1: Model the graph classes as ideals
\item Step 2: \begin{minipage}[t]{0.8\linewidth}
    Formulate Vizing’s conjecture as 
    sum-of-squares existence question
  \end{minipage}
\item Step 3: Transform to a semidefinite program
\item Step 4: Obtain a numeric certificate by solving the semidefinite program
\item Step 5: Guess an exact certificate
\item Step 6: Computationally verify the certificate
\item Step 7: Generalize the certificate
\item Step 8: Prove correctness of the certificate
\end{itemize}

For fixed values of $n_\cG$, $k_\cG$, $n_\cH$ and $k_\cH$ the first
step is to create the ideal $\Iviz$ as described in
Section~\ref{sec_poly_I}, in particular Definition~\ref{def:Isos}. To
summarize, we create the ideal $\Iviz$ in a suitable polynomial ring
in such a way that the points in
the variety $\variety{\Iviz}$ are in bijection to the triples $(G,H,D)$ 
where $G$ is a graph in $\cG$, $H$ is a graph in $\cH$ 
and $D$ is a dominating set of any size of $G \Box H$; see Theorem~\ref{def:IGHsdp}.
In this polynomial ring there is a variable for each possible edge
of $\cG$ and $\cH$ (indicating whether this edge is
present or not in the particular graphs $G$ and $H$) and for each vertex of 
$\cG \Box \cH$ (indicating whether this vertex is in the
dominating set of $G \Box H$ or not).

The second step is to use the polynomial ring variables mentioned above to 
reformulate Vizing's conjecture: It is true for a fixed partition if the polynomial~$\fviz$ (Definition~\ref{def:fStar}) is
non-negative if evaluated at all points in the variety~$\variety{\Iviz}$ of the
constructed ideal. 
For showing that the polynomial is non-negative, we aim for rewriting it
as a finite sum of squares of polynomials (modulo the ideal~$\Iviz$). If we
find such polynomials, then these polynomials form a certificate for Vizing's
conjecture for the fixed partition.
To be more precise and as already described in
Section~\ref{sec_poly_I}, Vizing's conjecture is true for these fixed values
of $n_\cG$, $k_\cG$, $n_\cH$ and $k_\cH$ if and only if
$\fviz$ is $\ell$-sos modulo $\Iviz$.

In the subsequent Section~\ref{sec:sdp} we describe how to perform step three and do another
reformulation, namely as a semidefinite program. 
Note that in order to do so, we need to have
specified $\ell$, the degree of the certificate.
Note also that in order to prepare the
semidefinite program, we use 
basis polynomials (i.e., special generators)
of the ideals. These
are obtained by computing a Gröbner basis of the ideal;
see~\cite{coxetal} for more information on Gröbner bases.

The fourth step (Section~\ref{sec:step:numerical-certificate}) is now to solve the
semidefinite program.
If the program is infeasible (i.e., there exists no feasible solution),
we increase $\ell$. 
On the other hand, if the program is 
feasible, then we can construct a numeric
sum-of-squares certificate.
As the underlying system of equations---therefore
the future certificate---is quite large, we iterate
the following tasks: Find a numeric solution to the semidefinite
program, find or guess some structure in the solution, use these new
relations to reduce the size of the semidefinite program, and begin
again with solving the new program. This reduces the solution space and therefore
potentially also the size (number~$t$ of summands) of the certificate and
the number of monomials of the $s_i$ from Definition~\ref{definition:ellsos}.
The procedure above goes hand-in-hand with our next step
(Section~\ref{sec:step:exact-certificate}), namely obtaining (one
might call it guessing) an exact certificate out of the numeric
certificate.

Once we have a candidate for an exact certificate, we can check its
validity computationally by summing up the squares and reducing modulo the ideal; see
our step~six described in Section~\ref{sec:step:check-certificate}. 

We want to
point out, that we still consider Vizing's conjecture for a particular partition of graphs.
However, having such certificates for some partitions, one can go for
generalizing them by introducing parametrized partitions of
graphs. Our seventh step in Section~\ref{sec:step:generalize-certificate} provides more
information.

The final step is to 
prove that the newly obtained, generalized certificate candidate is
indeed a certificate; see as well Section~\ref{sec:step:generalize-certificate}.
Further certificates and different generalizations together with their proofs can be found in
Sections~\ref{sec:proof_of_kG=nG_and_kH=nH-1}
and~\ref{sec:certificates_kGIsnGMinus1_kHEqnHMinus1}.

\subsection{Transform to a Semidefinite Program}
\label{sec:sdp}
Semidefinite programming refers to the class of optimization problems 
where a linear function with a symmetric matrix variable is optimized
subject to linear constraints and the constraint that the matrix
variable must be positive semidefinite. A semidefinite program (SDP) can be solved to arbitrary precision in
polynomial time~\cite{VandenbergheBoyd}.
In practice the most prominent methods for solving an SDP
efficiently are interior-point algorithms.
We use the solver MOSEK~\cite{mosek} within MATLAB. For more details on solving 
SDPs and on interior-point algorithms see~\cite{handbook-1}.

It is possible to check whether a polynomial $f$ is 
$\ell$-sos modulo an ideal with semidefinite programming. We refer
to \cite[pg.~298]{sos-instructions} for detailed information and
examples. We will now present how to do so for our setting only. 

Let us first fix (for example, by computing) a reduced Gröbner basis~$B$ of~$\Iviz$ and
fix a non-negative integer~$\ell$. Denote by $v$ the vector of all
monomials in our polynomial ring~$P$ of degree at most $\ell$ which can not be
reduced\footnote{Algorithmically speaking, we say that a polynomial~$f$ is
  reduced modulo the ideal~$I$ if $f$ is the representative of~$f+I$ which is
  invariant under reduction by a reduced Gröbner basis of the ideal~$I$.}
modulo~$\Iviz$ by the Gröbner basis~$B$.
Let $p$ be the length of the vector
$v$. Then $\fviz$ (of Definition~\ref{def:fStar}) is $\ell$-sos modulo $\Iviz$ if and only if there
is a positive semidefinite matrix $X\in \R^{p \times p}$
such that $\fviz$ is equal to
\begin{equation*}
  v^{T}Xv
\end{equation*}
when reduced over~$B$. Hence the SDP we end up with optimizes the matrix
variable
$X\in \R^{p \times p}$ subject to linear constraints that
guarantee $\fviz$ being $v^{T}Xv$ as above. The objective function can be chosen
arbitrarily because any matrix satisfying the constraints is
sufficient for our purpose. More will be said on this later.

If the SDP is feasible, then due to the positive
semidefiniteness we can decompose the solution $X$ into $X = S^{T}S$ for some $S\in \R^{t\times p}$ and $t\le p$.
Subsequently, we
define the polynomial~$s_{i}$ by the $i$th row of $Sv$ and obtain
\begin{equation}\label{eq:sdp-sos-certificate}
 v^{T}Xv = (Sv)^T(Sv) = \sum_{i=1}^t
  s_{i}^2 \equiv \fviz \mod \Iviz.
\end{equation}
Note that the last congruence holds due to the constraints in
the SDP. 
Equation~\eqref{eq:sdp-sos-certificate} then certifies that $\fviz$ can
be written as a sum of squares of
the~$s_i$, and hence, $\fviz$ is $\ell$-sos modulo~$\Iviz$ according to
Definition~\ref{definition:ellsos}. 

If the SDP is infeasible, we have an indication that there is no certificate of
degree~$\ell$. We increase~$\ell$ to $\ell+1$, because $\fviz$ could still be
$(\ell+1)$-sos modulo~$\Iviz$ or posses a certificate of even higher degree.
However, if no new reduced monomials appear in this increment, then
by Lemma~\ref{lem:NonNegVarEquivEllSos} and Theorem~\ref{proposition:VizSDPConj}
Vizing's conjecture does not hold.

\begin{example} \label{example:3232}
    We consider the graph classes $\cG$ and $\cH$ with $n_\cG = 3$,
    $k_\cG = 2$, $n_\cH=3$ and $k_\cH = 2$. Using
    SageMath~\cite{sagemath} we
    construct the ideal $\Iviz$, generated by 32~polynomials in
    15~variables. Again using SageMath, we find a Gröbner basis of
    size~95. 

    First, we check the existence of a 1-sos certificate.
    The vector $v$ for $\ell=1$ has length~$12$, i.e., we set
    up an SDP with a matrix variable $X \in \R^{12 \times 12}$. Imposing
    the necessary constraints to guarantee $\sum_{i=1}^{12} s_{i}^{2} \equiv \fviz
    \mod \Iviz$ leads to $67$~linear equality constraints. Interior-point algorithms detect
    infeasibility of this SDP in less than half a second, this indicates
    that there is no 1-sos certificate. 

    Setting up the SDP for checking the existence of a 2-sos
    certificate results in a problem with a matrix variable of dimension~$67$
    and~$359$ linear constraints. Interior-point algorithms find a
    solution $X$ of this SDP in $0.72$~seconds, this guarantees the
    existence of a numeric $2$-sos certificate for these graph classes. 
\end{example}

\subsection{Obtain a Numeric Certificate}
\label{sec:step:numerical-certificate}


As described in Section~\ref{sec:sdp} above, after solving the SDP we decompose
the solution~$X$. We do so by computing the eigenvalue decomposition
$X=V^TDV$ and then setting $S=D^{1/2}V$, 
where $D$ is the diagonal matrix having the eigenvalues on the main diagonal. 
Since
$X$ is positive semidefinite, all eigenvalues are non-negative and we can
compute $D^{1/2}$ by taking the square root of each of the diagonal entries.
The matrices $X$, $V$ and $D$ are obtained
through numeric computations, hence there might be entries in $D$ that are rather
close to zero but not considered as zero. 
We may try setting these almost-zero eigenvalues to zero, which
reduces the number of polynomials of the sum-of-squares certificate. 

Furthermore, a zero column in $S$ means that the corresponding
monomial is not needed in the certificate. Hence, we may try to
compute a certificate where we remove all monomials corresponding to
almost-zero columns. This can decrease the size of the SDP considerably and a smaller size of the matrix and fewer constraints is favorable for solving the SDP.
Of course, if removing these monomials leads to infeasibility of the SDP, then removing these monomials was not correct.


As already mentioned we can choose the objective function arbitrarily.  
Our
experiments show that different objective functions lead to
(significantly) different solutions. Therefore, we carefully choose a
suitable objective function leading to a ``nice'' solution for each
instance.

\begin{example}
\label{example:3232-2}
    We look again at the case we considered in Example~\ref{example:3232}, that is $\cG$ and $\cH$ with $n_\cG = 3$,
    $k_\cG = 2$, $n_\cH=3$ and $k_\cH = 2$, for which we already obtained an optimal solution $X$ and a numeric $2$-sos certificate.
    
    After computing (numerically) the eigenvalue decomposition $X =
    V^TDV$, we set all almost-zero eigenvalues to zero and compute $S
    = D^{1/2}V$, which results in a $12 \times 67$ matrix, i.e.,
    55~eigenvalues are considered as zero.
    In Figure~\ref{fig:heatmap1} a heat map of matrix 
    $S$ is displayed. It seems unattainable to convert this obtained solution
    to an exact certificate (see Section~\ref{sec:step:exact-certificate}),
    so we take a different path.
    \begin{figure}
    \includegraphics[width=0.9\columnwidth]{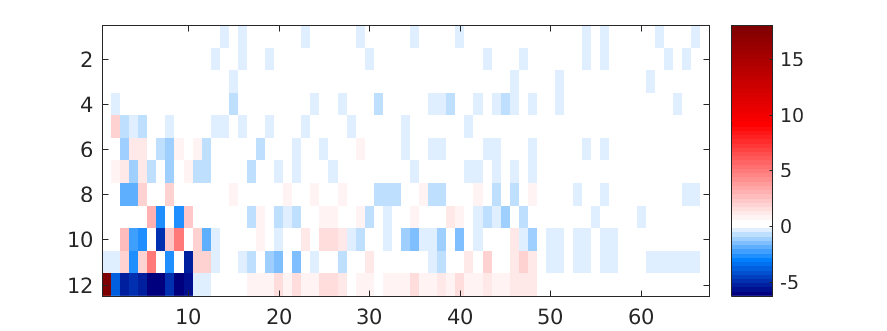}
    \caption{Plotting the entries of the matrix $S$ 
   for $v$ being the vector of all
   monomials in~$P$.
   Every row of $S$ corresponds to one polynomial $s_i$ of the numeric sum-of-squares certificate
   and every column of $S$ corresponds to one monomial in $v$.}
    \label{fig:heatmap1}
    \end{figure}
    
    Using different objective functions and aiming for a certificate
    where only certain monomials appear can lead to results with a
    clearer structure. If the $i$th monomial should not be included,
    we can set the $i$th row and column of $X$ equal to
    zero and obtain another SDP, where we have fewer variables and
    modified constraints. 
    We now try to use only the 19 monomials $1$, $x_{gh}$ and $x_{gh}x_{gh'}$ 
    for all $g \in V(\cG)$ and all $h$, $h' \in V(\cH)$ with $h'\neq h$.
    
    This results in an SDP with a matrix variable of dimension~$19$ and $99$ constraints. The
    SDP can be solved in $0.48$~seconds, and again, we obtain matrix
    $S$ (after setting almost-zero eigenvalues to zero), which now is
    of dimension $4 \times 19$. A heat map is given in Figure~\ref{fig:heatmap2}.
    
    \begin{figure}
    \includegraphics[width=0.9\columnwidth]{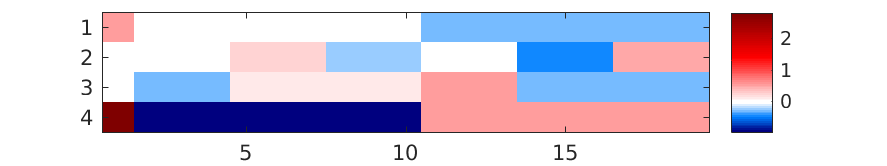}
\caption{Plotting the entries of matrix $S$ as in Figure~\ref{fig:heatmap1}, 
   but now we only allow the coefficients of 19 monomials to be non-zero. 
   The numeric sum-of-squares certificate consists of 4 (number of rows) 
   polynomials in 19 (number of columns) monomials.
   In particular the first three rows correspond to $s_1$, $s_2$ and $s_3$ 
   and the last row corresponds to $s_0$ as given in~\eqref{eq:possible-certificate-3232-difficult}.
}
    \label{fig:heatmap2}
    \end{figure}
    

    As one sees in Figure~\ref{fig:heatmap2}, $S$ has a certain block structure,
    suggesting that
    in each $s_i$ the coefficients of the monomials depend only on the index $g\in V(\cG)$ 
    and there is no dependence on the indices $h \in V(\cH)$.
    Therefore, we aim for a $2$-sos certificate of the form
    $\sum_{i=0}^{n_\cG} s_{i}^2 \equiv \fviz\!\!\mod \Iviz$ with
    \begin{subequations}\label{eq:possible-certificate-3232-difficult}
    \begin{align}
      s_{i} &=  \nu_i
              + \sum_{g \in V(\cG)} \lambda_{g,i} \biggl( \sum_{h \in V(\cH)}  x_{gh}\biggr)
              +\sum_{g \in V(\cG)} \mu_{g,i} \biggl( \sum_{\set{h, h'} \subseteq V(\cH)}  x_{gh}x_{gh'}\biggr)
              \intertext{for $i \in \set{1, \dots, n_\cG}$ and}
              s_{0} &= \alpha
                      + \beta \biggl( \sum_{g \in V(\cG)} \sum_{h \in V(\cH)} x_{gh} \biggr)
                      + \gamma \sum_{g \in V(\cG)} \biggl( \sum_{\set{h, h'} \subseteq V(\cH)} x_{gh}x_{gh'}\biggr),
    \end{align} 
    \end{subequations}
   where the coefficients $\alpha$, $\beta$, $\gamma$, $\nu_{i}$,
   $\lambda_{g,i}$ and $\mu_{g,i}$ are the entries of $S$. However, we only
   have the numeric values
   \begin{equation*}
     S = \begin{small}\left(\begin{array}{rrrrrrr}
0.535&0.011&0.011&0.011&-0.289&-0.289&-0.289\\
0.000&0.000&0.236&-0.236&-0.001&-0.471&0.472\\
-0.000&-0.272&0.136&0.136&0.544&-0.273&-0.272\\
2.778&-0.962&-0.962&-0.962&0.536&0.536&0.536\\
\end{array}\right)
\end{small}

   \end{equation*}
   at hand and it is not obvious how to guess suitable
   exact numbers from it. 
   In contrast, looking at the values
   \begin{equation*}
     X = \begin{small}\left(\begin{array}{rrrrrrr}
8.000&-2.667 &-2.667&-2.667& 1.333& 1.333&1.333\\
-2.667&1.000 &0.889 & 0.889&-0.667&-0.444&-0.444\\
-2.667&0.889 &1.000 & 0.889&-0.444&-0.667&-0.444\\
-2.667&0.889 &0.889 & 1.000&-0.444&-0.444&-0.667\\
 1.333&-0.667&-0.444&-0.445& 0.667& 0.222&0.222\\
 1.333&-0.444&-0.667&-0.445& 0.222& 0.667&0.222\\
 1.333&-0.444&-0.444&-0.667& 0.222& 0.222&0.667\\
\end{array}\right)
\end{small}
,
   \end{equation*}
   it seems almost
   obvious which simple algebraic numbers the entries of $X$ could be,
   for example $0.667$ could be $2/3$.
    We will use that in the following section.
\end{example}

\subsection{Guess an Exact Certificate}
\label{sec:step:exact-certificate}

We now have a guess for the structure of the certificate, but coefficients that are simple algebraic numbers are hard to
determine from the numbers in $S$. On the other hand, the exact numbers in $X$
seem to be rather obvious so we go back to the relation
$X=S^{T}S$. It implies that if we fix two monomials then the inner product of the vectors of the coefficients of these monomials in all the $s_i$ has to be equal to the corresponding number in $X$.

Setting up a system of equations using all possible inner products, we may
obtain a solution to this system. This solution determines the coefficients in
the certificate (and the certificate might be simplified even further).

\begin{example}
\label{example:3232-3}

    We continue Example~\ref{example:3232}, that is we consider the graph classes $\cG$ and $\cH$ with $n_\cG = 3$,
    $k_\cG = 2$, $n_\cH=3$ and $k_\cH = 2$. 
  
    The exact numbers in $X$ given in Example~\ref{example:3232-2} can be guessed easily.
    In fact, if this guess for $X$ is correct, every choice of $S$ such that
    $S^TS = X$ gives a certificate. Using the relation $S^TS = X$ we set up a
    system of equations on the parameters of~\eqref{eq:possible-certificate-3232-difficult}. 
    To be more precise, let $\lambda_g = (\lambda_{g,i})_{i=1,\dots,n_\cG}$,
    $\mu_g = (\mu_{g,i})_{i=1,\dots,n_\cG}$ and $\nu =
    (\nu_{i})_{i=1,\dots,n_\cG}$. Then we can define the vectors $a =
    \binom{\nu}{\alpha}$, $b_g = \binom{\lambda_g}{\beta}$ and $c_g =
    \binom{\mu_g}{\gamma}$, and $S^TS = X$ (together with the guessed values for $X$) implies that
    \begin{align*}
    \langle a,a \rangle     &= 2(n_\cG - 1)^2, & & \\
    \langle a,b_g \rangle   &= -\tfrac{4}{3}(n_\cG - 1),&    \langle a,c_g \rangle &= \tfrac{2}{3}(n_\cG - 1),\\
    \langle b_g,b_g \rangle &= 1 ,          &    \langle b_g,b_{g'} \rangle &= \tfrac{8}{3},\\
    \langle c_g,c_g \rangle &= \tfrac{6}{9}, &    \langle c_g,c_{g'} \rangle &= \tfrac{2}{9},\\
    \langle b_g,c_g \rangle &= -\tfrac{6}{9},&    \langle b_g,c_{g'} \rangle &= -\tfrac{4}{9}
    \end{align*}
    has to hold for each $g\in V(\cG)$, where $\langle\cdot,\cdot\rangle$
    denotes the standard inner product.
    Under the assumption that our guess for $X$ was correct,
    each solution to this system of equations leads to a valid sum-of-squares
    certificate~\eqref{eq:possible-certificate-3232-difficult}. 

    We want a sparse certificate and the numeric solution suggests that $\nu_2 = \nu_3 = 0$ holds, so we try to obtain a solution with also $\nu_1 = 0$ (even though the numeric solution does not fit into that setting).
    Using these values, the equations involving the vector $a$ determine the exact values for $\alpha$, $\beta$ and $\gamma$ as $\alpha = \sqrt{2}(n_\cG - 1)$, $\beta = -\frac{2}{3}\sqrt{2}$ and $\gamma = \frac{1}{3}\sqrt{2}$. 
    With that, the system of equations simplifies to
    \begin{align*}
    \langle \lambda_g,\lambda_g \rangle &= \tfrac{1}{9},  & \langle \lambda_g,\lambda_{g'} \rangle &= 0,\\
    \langle \mu_g,\mu_g \rangle         &= \tfrac{4}{9},  & \langle \mu_g,\mu_{g'} \rangle &= 0,\\
    \langle \lambda_g,\mu_g \rangle     &= -\tfrac{2}{9}, & \langle \lambda_g,\mu_{g'} \rangle &= 0.
    \end{align*}
    Calculating $\sum_{i=1}^{n_\cG} s_i^2$ we find out that, due to the system of equations, the sum-of-squares simplifies to
    \begin{align*}
    \sum_{i=1}^{n_\cG} s_i^2 = \frac{1}{9}\sum_{g \in V(\cG)}\biggl( 
    \sum_{h \in V(\cH)}  x_{gh} - 2 \sum_{\set{h, h'} \subseteq V(\cH)}  x_{gh}x_{gh'}
    \biggr)^2.
    \end{align*}
    Hence, if~\eqref{eq:possible-certificate-3232-difficult} is a sum-of-squares certificate then also
    \begin{subequations}\label{eq:possible-certificate-3232-easy}
    \begin{align}
      s_{0} &=  \alpha
              + \beta \biggl( \sum_{g \in V(\cG)} \sum_{h \in V(\cH)} x_{gh} \biggr)
              + \gamma \biggl(\sum_{g \in V(\cG)}  \sum_{\set{h, h'} \subseteq V(\cH)} x_{gh}x_{gh'}\biggr),\\    
    s_{g} &= \frac{1}{3}\biggl(  \sum_{h \in V(\cH)}  x_{gh} -  2 \sum_{\set{h, h'} \subseteq V(\cH)}  x_{gh}x_{gh'} \biggr)
    \quad\text{for $g \in V(\cG)$},
    \end{align}
  \end{subequations}
    where $\alpha = \sqrt{2}(n_\cG - 1)$, $\beta = -\frac{2}{3}\sqrt{2}$ and $\gamma = \frac{1}{3}\sqrt{2}$ is a sum-of-squares certificate.
\end{example}

To close this section let us highlight once more that we use the SDP and its 
solution 
to find out which monomials are used in the 
certificate and to obtain a structure of their coefficients. 
In particular we do not need a solution of the SDP of high precision, 
so 
solving the SDP is not a bottleneck in this example.
It will turn out that this is also true for all other examples we consider.

\subsection{Computationally Verify the Certificate}
\label{sec:step:check-certificate}

When a certificate is conjectured, it is straightforward to verify
it computationally via SageMath~\cite{sagemath}. To do so, it is necessary to compute the
Gröbner basis of $\Iviz$. Observe that at this point, semidefinite programming is no longer needed.

\begin{example}
 We computationally verified (using SageMath) the certificate derived in
 Example~\ref{example:3232-3} for the graph classes $\cG$ and $\cH$ with $n_\cG = 3$,
    $k_\cG = 2$, $n_\cH=3$ and $k_\cH = 2$. 
\end{example}

\subsection{Generalize the Certificate and Prove Correctness}
\label{sec:step:generalize-certificate}

In Sections \ref{sec:sdp} to \ref{sec:step:check-certificate}, we presented a methodology for obtaining a sum-of-squares certificate for graph classes $\cG$ and $\cH$ with fixed parameters $n_\cG$,
$k_\cG$, $n_\cH$ and $k_\cH$. Assuming that the previously found pattern generalizes, one can iterate the steps outlined above to obtain certificates for larger classes of graphs.

\begin{example}
    We want to generalize the certificate for the graph classes $\cG$ and $\cH$ with $n_\cG = 3$,
    $k_\cG = 2$, $n_\cH=3$ and $k_\cH = 2$ to the case $k_\cG = n_\cG - 1\ge 
    1$, $n_\cH = 3$ and $k_\cH = 2$ for any $n_\cG \ge 2$.

    Solving the SDP for the cases $n_\cG=4$ and $n_\cG=5$ again yields nicely structured matrices and in fact, all the calculations done for the case $n_\cG=3$ (which we already wrote down parametrized by $n_\cG$ above) go through.
   
    Hence, we are able to generalize the sum-of-squares certificate~\eqref{eq:possible-certificate-3232-easy} in the following way.
\end{example}
   
    \begin{theorem} \label{thm:sosCertificate_kGEqnGMinus1_kH2nH3}
      For $k_\cG = n_\cG - 1 \ge 1$, $n_\cH = 3$ and $k_\cH = 2$
      Vizing's conjecture is true as the polynomials
        \begin{align*}
          s_{0} &=  \alpha
                  + \beta \biggl( \sum_{g \in V(\cG)} \sum_{h \in V(\cH)} x_{gh} \biggr)
                  + \gamma \biggl(\sum_{g \in V(\cG)}  \sum_{\set{h, h'} \subseteq V(\cH)} x_{gh}x_{gh'}\biggr)   
            \intertext{and}
            s_{g} &= \frac{1}{3}\biggl(  \sum_{h \in V(\cH)}  x_{gh} -  2 \sum_{\set{h, h'} \subseteq V(\cH)}  x_{gh}x_{gh'} \biggr)
                    \quad\text{for $g \in V(\cG)$},
        \end{align*}
        where $\alpha = \sqrt{2}(n_\cG-1)$, $\beta = -\frac{2}{3}\sqrt{2}$
        and $\gamma = \frac{1}{3}\sqrt{2}$,
        are a sum-of-squares certificate with degree~$2$ of $\fviz$.
    \end{theorem}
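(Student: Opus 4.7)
The plan is to verify the polynomial congruence $s_0^2 + \sum_{g \in V(\cG)} s_g^2 \equiv \fviz \pmod{\Iviz}$, which by Definition~\ref{definition:ellsos} is exactly what it means for the $s_i$ to form a degree-$2$ sum-of-squares certificate. The ideal $\Iviz$ is radical (it contains $z^2 - z$ for every variable, so Lemma~\ref{lem_kreuzer_rad} applies) and $\variety{\Iviz}$ is finite and real; consequently Lemma~\ref{lem:NonNegVarEquivEllSos} permits me to check the congruence pointwise: it suffices to show $s_0(z^*)^2 + \sum_g s_g(z^*)^2 = \fviz(z^*)$ for every $z^* \in \variety{\Iviz}$. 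By Theorem~\ref{def:IGHsdp}, each such $z^*$ corresponds to a triple $(G, H, D)$ with $G \in \cG$, $H \in \cH$ and $D$ a dominating set of $G \Box H$, and the polynomials $s_i$ depend only on the $x$-coordinates of $z^*$.

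My first step would be to rewrite the pointwise identity in terms of compact invariants. For $g \in V(\cG)$ let $d_g := \sum_{h \in V(\cH)} x_{gh}^*$ denote the size of $D$ in the $g$-th column, and set $\sigma_i := \card{\set{g \in V(\cG) : d_g = i}}$ for $i \in \set{0, 1, 2, 3}$. Using $(x_{gh}^*)^2 = x_{gh}^*$, a short direct computation yields $s_g(z^*) = \tfrac{1}{3} d_g (2 - d_g)$ and $s_0(z^*) = \sqrt{2}(\sigma_0 + \sigma_1/3 - 1)$. After substitution and simplification the identity reduces to the scalar equation
\begin{equation*}
    (3\sigma_0 + \sigma_1)^2 = 9\sigma_0 + \sigma_1,
\end{equation*}
whose only non-negative integer solutions are $(\sigma_0, \sigma_1) \in \set{(0, 0), (0, 1), (1, 0)}$.

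The remaining and principal task is the combinatorial claim that every achievable configuration lies in this set. Here I would exploit that $\gamma(G) = n_\cG - 1$ forces $G$ to have exactly one edge $\set{g_0, g_1}$, since two independent edges or a path of length two would already yield $\gamma(G) \le n_\cG - 2$; analogously $H$ has a single edge from the non-dominating vertex to a vertex of $D_\cH$. Thus $G \Box H$ decomposes into $n_\cG - 2$ isolated columns, in which the middle vertex has no neighbour (forcing it into $D$) while the other two share only a vertical edge (at least one of them in $D$, giving $d_g \ge 2$), plus two distinguished edge columns whose six vertices form a disjoint matching edge together with a $4$-cycle. A direct case analysis on these six vertices rules out $(d_{g_0}, d_{g_1}) = (0, 0)$ and $(1, 1)$, and shows that $d_{g_i} = 0$ forces $d_{g_{1-i}} = 3$; hence $(\sigma_0, \sigma_1)$ is constrained to the three configurations above.

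The main obstacle is precisely this six-vertex case analysis, together with the preliminary structural claim about graphs with $\gamma = n-1$; once these are dispatched, the identity follows immediately from the algebraic reduction described above.
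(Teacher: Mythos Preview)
Your approach is correct and genuinely different from the paper's. The paper proves $s_0^2 + \sum_g s_g^2 \equiv \fviz \pmod{\Iviz}$ purely algebraically: it expands everything in terms of the polynomials $\SigSubset{i}$ and $\tauij{i}{j}$, invokes Lemma~\ref{lem:compute_tau22} (which rests on Lemma~\ref{lem:prodxghj_kGeqnGMinus1_kHeqnHMinusi}) to reduce $\tauij{2}{2}$, and then verifies a system of coefficient equations. You instead pass to the variety via radicality---note that the correct reference here is Hilbert's Nullstellensatz in the form of Remark~\ref{remark:HilbertsNullstellensatz}, not Lemma~\ref{lem:NonNegVarEquivEllSos}---rewrite the identity in terms of the column sizes $d_g$, and reduce it to $(3\sigma_0+\sigma_1)^2 = 9\sigma_0+\sigma_1$, which you then dispatch by a direct combinatorial classification of dominating sets in the one-edge structure of $G$ and $H$. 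Your argument is more elementary and more transparent for these specific parameters; the six-vertex case analysis is short and the scalar reduction is clean. The paper's route costs more in setting up Lemmata~\ref{lem:eggPrimeZero_kGeqnGMinus1}--\ref{lem:compute_tau22}, but that machinery handles $n_\cH=2$ in the same breath (Theorem~\ref{thm:sosCertificate_kGEqnGMinus1_kH1nH2}) and mirrors the $\sigma$-calculus framework used throughout Section~\ref{sec:proof_of_kG=nG_and_kH=nH-1}, whereas your combinatorial analysis is tailored to this case and would have to be redone from scratch for other parameter regimes.
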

    The proof will be given later on after introducing some more auxiliary results; see Section~\ref{sec:certificates_kGIsnGMinus1_kHEqnHMinus1}.
    Of course, once having the theorem above, it
    can be verified computationally for particular parameter values, for example
    for $k_\cG=4$ and $n_\cG=5$, where the computation of a Gröbner basis
    is feasible.

\subsection{Summary}
In this section we saw by an example how to use our
machinery combined with clever guessing in order to obtain sum-of-squares
certificates for proving that Vizing's conjecture holds for fixed
values of $n_\cG$, $k_\cG$, $n_\cH$ and $k_\cH$, and how this can be used to
obtain certificates for a less restricted set of parameters. We will use the next
sections in order to present further certificates and generalizations that we found using
our method and for which we were able to prove
correctness.

\section{Exact Certificates for \texorpdfstring{$k_\cG = n_\cG$}{kG=nG} and \texorpdfstring{$k_\cH = n_\cH-d$}{kH=nH-d}}
\label{sec:proof_of_kG=nG_and_kH=nH-1}

In this section we deal with certificates for the case $k_\cG = n_\cG$ and 
$k_\cH = n_\cH-d$. Towards this end we will first prove several auxiliary
results in Section~\ref{sec:sigma-calculus}.
Next we present and prove certificates for $d = 0$, $d=1$ and
$d=2$ in 
Sections~\ref{sec:sosCertificaten_kGeqnG_kHeqnH_easy},~\ref{sec:sosCertificaten_kGeqnG_kHeqnHminus1_easy}
 and~\ref{sec:sosCertificaten_kGeqnG_kHeqnHminus2_easy}.
Then in Section~\ref{sec:sosCertificaten_kGeqnG_kHeqnHMinusd} we will see how 
this brings insight on the structure of
the certificates. We are therefore able to formulate
a conjecture on the structure of the certificate for general $d$ 
and also present a strategy for proving it. This will be complemented
by a more computational approach for checking the conjecture for a
given value~$d$; in particular we will prove the conjecture for $d=3$
and $d=4$ with the help of SageMath~\cite{sagemath}.

\subsection{Auxiliary Results: Sigma Calculus}
\label{sec:sigma-calculus}

In this section we will develop the machinery needed to prove the
correctness of our (exact) certificates. It will turn out that the key is to be
able to do operations with certain symmetric polynomials, 
which will be introduced in 
Definition~\ref{def:SigSubset_g_i}.
Another important tool will be again
Theorem~\ref{thm:HilbertsNullstellensatz}, 
Hilbert's Nullstellensatz. It's implications formulated
as~Remark~\ref{remark:HilbertsNullstellensatz}
will be used repeatedly, for example in the proof of the following first lemma.

\begin{lemma}\label{lem:eggPrimeZero_kGeqnG}
  \label{eq:1MinuseG}
  Let $k_\cG = n_\cG \geq 1$. Then $e_{gg'} \in I_\cG \subseteq \Iviz$
  holds for all $\set{g,g'} \subseteq V(\cG)$.
\end{lemma}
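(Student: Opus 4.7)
The plan is to use the radicality of $I_\cG$ together with Hilbert's Nullstellensatz (Theorem~\ref{thm:HilbertsNullstellensatz} via Remark~\ref{remark:HilbertsNullstellensatz}) to reduce the claim to showing that $e_{gg'}$ vanishes identically on the variety $\variety{I_\cG}$. Radicality is immediate from Lemma~\ref{lem_kreuzer_rad}, since $I_\cG$ contains the univariate square-free polynomial $e_{gg'}^2 - e_{gg'}$ for each edge variable. So it suffices to verify $e_{gg'}^\ast = 0$ for every $z^\ast = e_\cG^\ast \in \variety{I_\cG}$.

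First I would dispatch the degenerate case $n_\cG = 1$: then the set of pairs $\set{g,g'} \subseteq V(\cG)$ is empty and there is nothing to prove. For $n_\cG \geq 2$, I would exploit the assumption $k_\cG = n_\cG$, which forces $D_\cG = V(\cG)$. The upshot is that the domination family~\eqref{eq:domset-fix} is vacuous (indexed by the empty set $V(\cG) \setminus D_\cG$), so the only non-trivial equations left in $I_\cG$ are~\eqref{eq:eij-fix} and the minimality family~\eqref{eq:kcover-fix}.

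The key step is to unpack~\eqref{eq:kcover-fix} at a fixed vertex $g^\ast \in V(\cG)$ by taking $S = V(\cG) \setminus \set{g^\ast}$, which has cardinality $n_\cG - 1 = k_\cG - 1$. Then $V(\cG) \setminus S = \set{g^\ast}$, so the outer product collapses to a single factor and~\eqref{eq:kcover-fix} becomes the single polynomial equation
\begin{equation*}
    \sum_{g \in S} e_{g g^\ast} \;=\; 0 \qquad \text{in } I_\cG.
\end{equation*}
Evaluated at any $z^\ast \in \variety{I_\cG}$, this says that a sum of quantities $e_{g g^\ast}^\ast$ is zero. By~\eqref{eq:eij-fix}, each $e_{g g^\ast}^\ast \in \set{0,1}$, so a sum of non-negative integers being zero forces every summand to vanish, i.e.\ $e_{g g^\ast}^\ast = 0$ for all $g \neq g^\ast$. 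Ranging over $g^\ast$ covers all pairs $\set{g,g'} \subseteq V(\cG)$, and radicality then yields $e_{gg'} \in I_\cG$.

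No step looks genuinely hard; the mild subtlety is that from $\sum_g e_{g g^\ast} \in I_\cG$ one cannot peel off the individual $e_{g g^\ast}$ by purely formal ideal manipulations, so the argument really wants to pass through the variety (equivalently, use boolean-style case analysis together with Nullstellensatz). A secondary bookkeeping point is making sure $k_\cG - 1 = n_\cG - 1 \geq 1$ so that $S$ is non-empty, which is guaranteed by the $n_\cG \geq 2$ reduction. Finally, since $I_\cG \subseteq \Iviz$ by Definition~\ref{def:Isos}, the inclusion $e_{gg'} \in \Iviz$ is automatic.
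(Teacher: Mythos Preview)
Your proposal is correct and follows essentially the same approach as the paper: both arguments handle $n_\cG=1$ as vacuous, then apply Hilbert's Nullstellensatz together with radicality of $I_\cG$, evaluating the minimality constraint~\eqref{eq:kcover-fix} at $S = V(\cG)\setminus\{g^\ast\}$ and using the boolean nature of the $e_{gg'}^\ast$ to force each summand to vanish. Your write-up is slightly more explicit about why radicality holds and why one must pass through the variety rather than peel off summands formally, but the substance is identical.
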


Translating this lemma in terms of congruence relations, we have
$e_{gg'} \equiv 0 \mod I_\cG$ and $e_{gg'} \equiv 0 \mod \Iviz$ for
all $\set{g,g'} \subseteq V(\cG)$. 

Let us briefly consider Lemma~\ref{lem:eggPrimeZero_kGeqnG} from a graph theoretic 
point of view. 
Due to Theorem~\ref{thm:BijectionVarietyGraphsG} the points in the variety of 
$I_\cG$ are in bijection to the graphs in $\cG$, which are the graphs on 
$n_\cG$ 
vertices with domination number $k_\cG=n_\cG$. 
It is easy to see that such graphs can not have any edges, because otherwise the 
domination number would be strictly less than $n_\cG$.
Hence $e_{gg'}^\ast = 0$ holds for all points $z^\ast$ in the variety of 
$\Iviz$, 
when we use Notation~\ref{notation:poly-vs-evaluated-star-world}. 
This intuitively justifies Lemma~\ref{lem:eggPrimeZero_kGeqnG} by 
graph theoretical considerations.

\begin{proof}[Proof of Lemma~\ref{lem:eggPrimeZero_kGeqnG}]
  For $k_\cG = n_\cG = 1$ there is no $\set{g,g'} \subseteq V(\cG)$,
  so there is nothing to prove.

  For each $\set{g,g'} \subseteq V(\cG)$, we apply
  Hilbert's Nullstellensatz on the polynomial $f = e_{gg'}$.

  We use Notation~\ref{notation:poly-vs-evaluated-star-world}, and
  let $z^\ast \in \variety{I_\cG}$, i.e., $z^\ast$ is a
  common zero of \eqref{eq:eij-fix}, \eqref{eq:domset-fix} and
  \eqref{eq:kcover-fix}. Then clearly $e^\ast_{gg'} \in \set{0,1}$ due to
  \eqref{eq:eij-fix}.
  Furthermore $k_\cG = n_\cG$ implies that
  \eqref{eq:kcover-fix} simplifies to the equations
  \begin{equation*}
    \sum_{\substack{g \in V(\cG) \\ g \neq g'}} e^\ast_{gg'} = 0
    \quad\text{for $g' \in V(\cG)$}.
  \end{equation*} 
  Therefore $e^\ast_{gg'} = 0$ for all
  $\set{g,g'} \subseteq V(\cG)$. Hence $z^\ast$ is also a zero
  of $f=e_{gg'}$ and Hilbert's Nullstellensatz
  (Theorem~\ref{thm:HilbertsNullstellensatz} and
  Remark~\ref{remark:HilbertsNullstellensatz}) implies $f = e_{gg'} \in I_\cG$.
\end{proof}

\begin{lemma}\label{lem:prodxghj_kGeqnG_kHeqnHMinusi}
  Let $k_\cG = n_\cG \geq 1$ 
    and $k_\cH = n_\cH - d \geq 1$ 
    for some $d \geq 0$.
  Moreover, let $g \in V(\cG)$ and $T \subseteq V(\cH)$ be a subset
  of size~$\card{T}=d+1$.
  Then
  \begin{equation}\label{eq:prod1MinusxghInIviz}
    \prod_{h \in T} (1-x_{gh}) \in \Iviz.
  \end{equation}
  Moreover, we have
  \begin{equation}\label{eq:prodxghj_kGeqnG_kHeqnHMinusi}
    \prod_{h \in T} x_{gh} \equiv 
    \sum_{r=0}^{d} (-1)^{d+r}\sum_{\substack{U \subseteq T\\\card{U}=r}}\;\prod_{h \in U} x_{gh}
    \mod \Iviz.
  \end{equation}
\end{lemma}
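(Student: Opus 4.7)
The plan is to deduce both statements from \eqref{eq:prod1MinusxghInIviz}. For the first part, I would apply Hilbert's Nullstellensatz (Theorem~\ref{thm:HilbertsNullstellensatz} combined with Remark~\ref{remark:HilbertsNullstellensatz}), which is legitimate because $\Iviz$ is radical (as noted in the proof of Corollary~\ref{proposition:VizSDPlsos}). Thus it suffices to verify that $\prod_{h \in T}(1-x_{gh})$ vanishes at every $z^\ast \in \variety{\Iviz}$.

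The heart of the argument is a graph-theoretic observation. Fix $z^\ast \in \variety{\Iviz}$ and use Notation~\ref{notation:poly-vs-evaluated-star-world-2}. For fixed $g \in V(\cG)$, let $D_g = \set{h \in V(\cH) \colon x_{gh}^\ast = 1}$. I claim $D_g$ is a dominating set of the graph $H$ corresponding to $z^\ast$. Indeed, Lemma~\ref{lem:eggPrimeZero_kGeqnG} gives $e_{gg'}^\ast = 0$ for all $\set{g,g'} \subseteq V(\cG)$, so the middle factor of~\eqref{eq:xgh_domset} equals~$1$ at~$z^\ast$, and~\eqref{eq:xgh_domset} reduces to
\begin{equation*}
  (1-x_{gh}^\ast)\prod_{h' \in V(\cH),\,h' \neq h}\!\!(1 - e_{hh'}^\ast x_{gh'}^\ast) = 0
  \quad\text{for all $h \in V(\cH)$.}
\end{equation*}
This forces every $h \notin D_g$ to satisfy $e_{hh'}^\ast x_{gh'}^\ast = 1$ for some $h' \neq h$, i.e., $h$ is adjacent in $H$ to some $h' \in D_g$. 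Hence $D_g$ dominates $H$, so $\card{D_g} \geq \gamma(H) = k_\cH = n_\cH - d$, and therefore $\card{V(\cH) \setminus D_g} \leq d$. Since $\card{T} = d+1$, pigeonhole gives some $h \in T \cap D_g$, for which $1-x_{gh}^\ast = 0$, so the product vanishes at $z^\ast$. This establishes~\eqref{eq:prod1MinusxghInIviz}.

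For the second part, I would simply expand the product via the standard inclusion-exclusion identity
\begin{equation*}
  \prod_{h \in T}(1-x_{gh}) = \sum_{r=0}^{d+1}(-1)^{r}\sum_{\substack{U \subseteq T\\\card{U}=r}}\;\prod_{h \in U} x_{gh},
\end{equation*}
which is a formal polynomial identity requiring no use of $\Iviz$. Combining with~\eqref{eq:prod1MinusxghInIviz}, isolating the top-degree term (coming from $U = T$), and multiplying through by $(-1)^{d+1}$ yields exactly~\eqref{eq:prodxghj_kGeqnG_kHeqnHMinusi}, since $-(-1)^{d+1}(-1)^{r} = (-1)^{d+r}$.

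The main obstacle is conceptual rather than technical: one has to recognize that the graph-theoretic intuition (rows of $G \Box H$ are independent copies of $H$ when $G$ is edgeless, so each row restricts to a dominating set of $H$) can be extracted directly from~\eqref{eq:xgh_domset} once Lemma~\ref{lem:eggPrimeZero_kGeqnG} has trivialized the $G$-factor. Once this observation is made, both Hilbert's Nullstellensatz and the inclusion-exclusion expansion are routine.
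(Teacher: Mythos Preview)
Your proposal is correct and takes essentially the same approach as the paper: Nullstellensatz plus Lemma~\ref{lem:eggPrimeZero_kGeqnG} to trivialize the $G$-factor, then the domination structure of $H$, followed by the inclusion--exclusion expansion. The only cosmetic difference is that you phrase the key step via $\card{D_g} \geq \gamma(H) = k_\cH$ and pigeonhole (exactly the graph-theoretic intuition the paper states just before its proof), whereas the paper argues by contradiction directly from~\eqref{eq:kcover-fix} for $\cH$ with $S = V(\cH) \setminus T$.
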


Note that also Lemma~\ref{lem:prodxghj_kGeqnG_kHeqnHMinusi} can be 
justified intuitively from the graph theoretic perspective. 
According to Theorem~\ref{def:IGHsdp}, a point in the variety of $\Iviz$ 
corresponds to two graphs $G$ and $H$ with $n_\cG$ and $n_\cH$ vertices and 
domination numbers~$k_\cG=n_\cG$ and~$k_\cH=n_\cH - d$ respectively, and a 
dominating set $D$ in $G\Box H$.
Due to Lemma~\ref{lem:eggPrimeZero_kGeqnG} there is no edge in $G$. 
Hence each vertex $gh$ in $G\Box H$ either must be in the 
dominating set $D$ itself, 
or there must be a vertex $h'\in V(H)$ 
such that $gh'$ is in $D$  and the edge $\set{h,h'}$ is in $E(H)$.
In other words, for fixed $g \in V(G)$, the vertices $h \in V(H)$ with $x^\ast_{gh} 
= 1$ have 
to form a dominating set in $H$. Since every dominating set in $H$ has at least 
$k_\cH = n_\cH - d$ vertices, at most $d$ vertices are not in a dominating 
set. Therefore, whenever we choose $d+1$ vertices from $V(H)$, 
at least one vertex has to be in $D$.
Equivalently, in every set $T$ of $d+1$ vertices of $V(H)$ there is at least one 
vertex $h$ with $x^\ast_{gh} = 1$, 
which is stated in~\eqref{eq:prod1MinusxghInIviz}.

\begin{proof}[Proof of Lemma~\ref{lem:prodxghj_kGeqnG_kHeqnHMinusi}]
  We use Hilbert's Nullstellensatz for $f = \prod_{h \in T} (1-x_{gh})$.

  Let $z^\ast \in \variety{\Iviz}$, i.e., $z^\ast$ is a common zero of
  \eqref{eq:eij-fix}, \eqref{eq:domset-fix} and \eqref{eq:kcover-fix}
  for both $\cG$ and $\cH$, and of \eqref{eq:xgh} and
  \eqref{eq:xgh_domset}. Note that we use 
  Notation~\ref{notation:poly-vs-evaluated-star-world-2}.

  Let us consider the second factor of~\eqref{eq:xgh_domset}.
  If $n_\cG = 1$, then there is no $g' \neq g \in V(\cG)$,
  so this product is empty and equals~$1$. If $n_\cG \ge 2$, then
  $e_{gg'}^\ast = 0$ (the component of $z^\ast$ corresponding
  to $e_{gg'}$) for all $g' \in V(\cG)$ because of
  Lemma~\ref{lem:eggPrimeZero_kGeqnG}, and the product equals~$1$ again.
  Hence \eqref{eq:xgh_domset}
  implies
  \begin{equation}
    \label{eq:IdealEquationSimplifiedProofProd1MinusxghIsZero}
    \bigl(1-x^\ast_{gh}\bigr)
    \parentheses[\bigg]{
    \prod_{\substack{h' \in V(\cH) \\ h' \neq h}}
    \parentheses[\big]{1 - e^\ast_{hh'} x^\ast_{gh'}}} = 0
    \quad\text{for $h \in V(\cH)$}. 
  \end{equation}
  Furthermore $e_{hh'}^\ast \in \set{0,1}$ for all
  $\set{h,h'} \subseteq V(\cH)$ because of \eqref{eq:eij-fix}, and
  $x_{gh}^\ast \in \set{0,1}$ for all $h \in V(\cH)$ due to
  \eqref{eq:xgh}.
    
  Assume that $z^\ast$ is not a zero of $f$. Then clearly
  $x^\ast_{gh}=0$ for all $h \in T$. In particular,
  \eqref{eq:IdealEquationSimplifiedProofProd1MinusxghIsZero}
  simplifies to
  \begin{equation*}
    \prod_{\substack{h' \in V(\cH) \\ h' \neq h}}
    \parentheses[\big]{1 - e^\ast_{hh'} x^\ast_{gh'}} =0
    \quad\text{for $h \in T$}.
  \end{equation*} 
  Therefore, for each $h \in T$, there is a $h' \in V(\cH)$ such that
  $e_{hh'}^\ast=1$ and $x_{gh'}^\ast = 1$. As $x_{gh}^\ast=0$ for all
  $h \in T$, we have $h' \not\in T$.
  
  If $n_\cH = 1$, then $d = 0$ and $\card{T} = 1$. 
  But then $V(\cH) \setminus T$ is empty, so no choice for $h'$ is left,
  a contradiction.
  If $n_\cH \ge 2$, then with $S = V(\cH) \setminus T$ the equation \eqref{eq:kcover-fix} for $\cH$
  simplifies to
  \begin{equation*}
    \prod_{h \in T} \parentheses[\bigg]{\;\sum_{h'' \in S} e^\ast_{h''h}} = 0.
  \end{equation*}
  For each $h \in T$, the $h'$ (defined above) is in~$S$, so the
  summand $e^\ast_{h''h} = e^\ast_{hh'}=1$ for $h''=h'$. All other
  summands are either $0$ or $1$,
  hence each sum
  $\sum_{h'' \in S} e^\ast_{h''h}$ is at least one, so in particular
  non-zero. This is again a contradiction. 
  
  Hence for all $n_\cG \ge 1$ 
  our assumption was wrong, so $z^\ast$ is a zero of $f$.
  Now, Hilbert's Nullstellensatz
  (Theorem~\ref{thm:HilbertsNullstellensatz} and
  Remark~\ref{remark:HilbertsNullstellensatz}) implies $f \in \Iviz$, 
  so~\eqref{eq:prod1MinusxghInIviz} is satisfied.
  
  Furthermore, \eqref{eq:prod1MinusxghInIviz} above can be rewritten as
  \begin{equation*}
    \prod_{h \in T} (1-x_{gh}) \equiv 0 \mod \Iviz.
  \end{equation*}
  Therefore, the congruence~\eqref{eq:prodxghj_kGeqnG_kHeqnHMinusi}
  follows from the fact that
  \begin{align*}
    \prod_{h \in T} (1-x_{gh}) 
    =
    \sum_{r=0}^{d+1} (-1)^{r}
    \sum_{\substack{U \subseteq T\\\card{U}=r}}\;\prod_{h \in U} x_{gh}
    \end{align*}
    holds.
\end{proof}

\begin{remark}
   \label{rem:Prod_of_xgh}
  In particular 
  for $k_\cG = n_\cG \geq 1$ and $k_\cH = n_\cH \ge 1$,
  Lemma~\ref{lem:prodxghj_kGeqnG_kHeqnHMinusi} implies
  \begin{equation*}   
  x_{gh} \equiv 1  \mod  \Iviz
  \end{equation*}
  for all  $g \in V(\cG)$ and all $h \in V(\cH)$.
  For $k_\cH = n_\cH - 1$,
  Lemma~\ref{lem:prodxghj_kGeqnG_kHeqnHMinusi} implies
    \begin{equation*}   
    x_{gh}x_{gh'} \equiv x_{gh} + x_{gh'} - 1  \mod  \Iviz
    \end{equation*}
    for all  $g \in V(\cG)$ and all $\set{h, h'} \subseteq V(\cH)$.
    For $k_\cH = n_\cH - 2$,
    Lemma~\ref{lem:prodxghj_kGeqnG_kHeqnHMinusi} implies
    \begin{equation*}
    x_{gh}x_{gh'}x_{gh''} \equiv x_{gh}x_{gh'} + x_{gh'}x_{gh''} + x_{gh}x_{gh''} - x_{gh} - x_{gh'} - x_{gh''} + 1  \mod  \Iviz
    \end{equation*}
    for all  $g \in V(\cG)$ and all $\set{h, h',h''} \subseteq V(\cH)$.
\end{remark}

Note that from a high-level point of view, if $k_\cH = n_\cH - d$,
then Lemma~\ref{lem:prodxghj_kGeqnG_kHeqnHMinusi} allows us to rewrite
particular products of $d+1$ terms as a sum of products of at most $d$
terms and therefore to reduce the degree of polynomials.

To continue and in order to apply the above findings, we introduce the
following polynomials.

\begin{definition}\label{def:SigSubset_g_i}
  Let $g \in V(\cG)$ and $i$ be a non-negative integer. We define
    \begin{align*}
    \SigSubset{i} = \sum_{\substack{S \subseteq V(\cH)\\\card{S}=i}}\;\prod_{h \in S} x_{gh}.
    \end{align*}
\end{definition}

In a classical setting the polynomial $\SigSubset{i}$ is the
elementary symmetric polynomial of degree $i$ in $n_\cH$ variables.
In the following we will investigate the arithmetic of the $\SigSubset{i}$ over the ideal $\Iviz$ and 
aim for
getting nice expressions for products of $\SigSubset{i}$.

\begin{lemma}\label{lem:ProdOfSumsAsSumOfSums}
    Let $k_\cG$, $n_\cG$, $k_\cH$, $n_\cH \geq 1$ and let $i \geq j$. Then
    \begin{align*}
    \SigSubset{i}\,\SigSubset{j} \equiv \sum_{r=0}^{\min\set{j,n_\cH-i}} \binom{j}{r}\binom{i+r}{j} \SigSubset{i+r} \mod \Iviz
    \end{align*}
    holds.
\end{lemma}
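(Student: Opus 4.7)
The plan is to expand the product $\SigSubset{i}\,\SigSubset{j}$ directly as a double sum over pairs of subsets of $V(\cH)$, and then collapse overlapping factors using the idempotence relation $x_{gh}^2 \equiv x_{gh} \mod \Iviz$, which holds because $x_{gh}^2 - x_{gh}$ is one of the defining polynomials \eqref{eq:xgh} of $\Iviz$. Concretely, I would write
\begin{equation*}
  \SigSubset{i}\,\SigSubset{j}
  = \sum_{\substack{S \subseteq V(\cH)\\\card{S}=i}}\,\sum_{\substack{T \subseteq V(\cH)\\\card{T}=j}}\;\prod_{h \in S}x_{gh}\prod_{h \in T}x_{gh}
  \equiv \sum_{\substack{S, T \subseteq V(\cH)\\\card{S}=i,\,\card{T}=j}}\;\prod_{h \in S \cup T}x_{gh} \mod \Iviz,
\end{equation*}
so that each monomial on the right is indexed by a subset $U = S \cup T$ of $V(\cH)$.

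Next I would regroup the sum by the union $U$. Setting $r = \card{T \setminus S}$, so that $\card{U} = i+r$, the task reduces to counting, for a fixed $U$ of size $i+r$, the number of ordered pairs $(S,T)$ with $S,T \subseteq U$, $\card{S}=i$, $\card{T}=j$ and $S \cup T = U$. Choosing $S$ contributes $\binom{i+r}{i}$; given $S$, the set $T$ is forced to contain the $r$ elements of $U \setminus S$ and may otherwise be any $(j-r)$-subset of $S$, giving $\binom{i}{j-r}$. Hence
\begin{equation*}
  \SigSubset{i}\,\SigSubset{j} \equiv \sum_{r} \binom{i+r}{i}\binom{i}{j-r} \SigSubset{i+r} \mod \Iviz.
\end{equation*}
A direct manipulation of factorials shows $\binom{i+r}{i}\binom{i}{j-r} = \binom{j}{r}\binom{i+r}{j}$, matching the claimed coefficient.

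Finally, I would justify the range of $r$: the combinatorial constraint $r = \card{T \setminus S} \le \card{T} = j$ gives $r \le j$, and $\card{U} = i+r \le n_\cH$ gives $r \le n_\cH - i$; outside these bounds the count is zero (and $\SigSubset{i+r}$ is in any case empty once $i+r > n_\cH$), which is exactly the upper limit $\min\set{j,n_\cH-i}$.

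I do not expect any genuine obstacle here: the only nontrivial step is the double-counting argument that produces $\binom{i+r}{i}\binom{i}{j-r}$, and verifying its equality with $\binom{j}{r}\binom{i+r}{j}$. Everything else is a bookkeeping application of the boolean relation on the $x_{gh}$, which is already encoded in $\Iviz$ and was used in the same spirit in Lemma~\ref{lem:prodxghj_kGeqnG_kHeqnHMinusi}. Note also that the hypothesis $i \ge j$ is used only implicitly: it ensures $i - j + r \ge 0$ so the binomial coefficients are the usual non-negative integers, but the formula is symmetric in the roles of $i$ and $j$ up to this indexing convention.
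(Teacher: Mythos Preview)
Your proof is correct and follows essentially the same approach as the paper: reduce via the idempotence $x_{gh}^2 \equiv x_{gh}$ and then count how many pairs $(S,T)$ collapse to a given square-free monomial of degree $i+r$. The only cosmetic difference is that the paper counts globally (fixing $S$, counting admissible $T$, multiplying by the number of $S$'s, and then dividing by $\binom{n_\cH}{i+r}$ to extract the coefficient), whereas you count directly per fixed union $U$; both routes yield $\frac{(i+r)!}{r!\,(j-r)!\,(i-j+r)!} = \binom{j}{r}\binom{i+r}{j}$.
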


Note that we can extend the summation range to $0 \le r \le j$ as
$\SigSubset{i}=0$ for all $i>n_\cH$. This makes the formulation of the lemma
completely independent of the parameters $k_\cG$, $n_\cG$, $k_\cH$ and $n_\cH$.
Moreover,
we will see in the proof that we actually only need generators~$x^2-x$
in the ideal, making the lemma valid in a more general setting.

\begin{remark}\label{remark:ProdOfSumsAsSumOfSums:small-values}
  As needed later, we state Lemma~\ref{lem:ProdOfSumsAsSumOfSums} for
  some particular values of $i$ and $j$. We have
  \begin{alignat*}{2}
    \SigSubset{1}^2
    &\equiv \SigSubset{1} + 2\SigSubset{2} &&\mod \Iviz, \\
    \SigSubset{2}\SigSubset{1}
    &\equiv 2\SigSubset{2} + 3\SigSubset{3} &&\mod \Iviz, \\
    \SigSubset{2}^2
    &\equiv \SigSubset{2} + 6\SigSubset{3} + 6\SigSubset{4} &&\mod \Iviz.
  \end{alignat*}
\end{remark}

Now we come back to the proof of Lemma~\ref{lem:ProdOfSumsAsSumOfSums}.
In the following, we use the phrase \emph{power products} to
refer to products of powers of variables
with non-negative exponent, or in other words, to the summands of a
polynomial without their coefficient. 

\begin{proof}[Proof of Lemma~\ref{lem:ProdOfSumsAsSumOfSums}]
  We start with a couple of remarks. All summands of $\SigSubset{i}$
  and $\SigSubset{j}$ have degree $i$ and $j$ respectively. Hence all summands in the
  product $\SigSubset{i}\,\SigSubset{j}$ are summands of degree
  $i+j$.  Furthermore, whenever two summands in $\SigSubset{i}$ and
  $\SigSubset{j}$ contain the same factor $x_{gh}$, a resulting
  factor~$x_{gh}^2$ can be reduced to $x_{gh}$ over $\Iviz$ because of
  \eqref{eq:xgh}. Therefore, all summands in
  $\SigSubset{i}\,\SigSubset{j}$ are square-free and will have degree
  at least $i$ and at most $i+j$. Clearly the degree is also bounded
  by $n_\cH$. Moreover $\SigSubset{i}$ and $\SigSubset{j}$ are
  symmetric in $h \in V(\cH)$. By all these considerations, it is
  therefore possible to write
  \begin{equation}\label{eq:ProdOfSumsAsSumOfSums:shape}
    \SigSubset{i}\,\SigSubset{j} \equiv
    \sum_{r=0}^{\min\set{j,n_\cH-i}} \delta_r \SigSubset{i+r} \mod \Iviz
  \end{equation}
  for some coefficients~$\delta_r \in \Z$. In fact, these coefficients
  are non-negative.

  For the following considerations, we always reduce modulo~$\Iviz$,
  therefore reducing exponents of monomials larger than one to
  exponents exactly one. So let us fix a power product $x_i$ of
  $\SigSubset{i}$ of degree~$i$ 
  (i.e., $x_i = \prod_{h \in S} x_{gh}$ for some $S$ with $\card{S} = i$)
  and count power products $x_j$ of
  $\SigSubset{j}$ so that the product $x_ix_j$ is of degree $i+r$ (as
  said, after reducing the power product over $\Iviz$). Apparently,
  there have to be $r$ factors in $x_j$ which are not factors of
  $x_i$; there are $\binom{n_\cH-i}{r}$ possible such choices. The
  remaining $j-r$ factors of $x_j$ have to be among the factors of
  $x_i$, hence there are $\binom{i}{j-r}$ possible choices. Finally,
  we note that there are $\binom{n_\cH}{i}$ choices for the fixed
  power product~$x_i$ above.

  In total, expanding the product $\SigSubset{i}\,\SigSubset{j}$
  results in a sum of $\binom{n_\cH-i}{r}\binom{i}{j-r}\binom{n_\cH}{i}$
  power-products of degree $i+r$ for each~$r$.
  We now collect these power products
  to determine the coefficients~$\delta_r$ of the corresponding summand. Each
  sum~$\SigSubset{i+r}$ consists of $\binom{n_\cH}{i+r}$
  power products of degree~$i+r$. Hence and due to the
  representation~\eqref{eq:ProdOfSumsAsSumOfSums:shape}, we have
  \begin{equation*}
    \delta_r
    = \binom{n_\cH-i}{r}\binom{i}{j-r}\binom{n_\cH}{i} \bigg/ \binom{n_\cH}{i+r}
    = \frac{(i+r)!}{r!\,(j-r)!\,(i-j+r)!} = \binom{j}{r}\binom{i+r}{j},
  \end{equation*}
  which completes the proof.
\end{proof}

Lemma~\ref{lem:ProdOfSumsAsSumOfSums}
allows us to replace products of our symmetric
polynomials~$\SigSubset{i}$ by sums.
This will become very handy in
proving certificates.

We now go back to our particular set-up with $k_\cH = n_\cH - d$.  The
next important ingredient is the following lemma, which allows us to
reduce some~$\SigSubset{d+j+1}$ of ``high'' degree.

\begin{lemma}\label{lem:reduceSigTooLarge}
  Let $k_\cG = n_\cG \geq 1$ and $k_\cH = n_\cH - d \geq 1$ for some
  $d \geq 1$. Let $j$ be a non-negative integer. Then
  \begin{equation*}
    \SigSubset{d+j+1}
    \equiv \binom{n_\cH}{d+j+1} \sum_{r=0}^{d} 
    \frac{\binom{d+1}{r}}{\binom{n_\cH}{j+r}} (-1)^{d+r}
     \SigSubset{j+r}
    \mod \Iviz
  \end{equation*}
  holds.
\end{lemma}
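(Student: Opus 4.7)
The target formula expresses $\SigSubset{d+j+1}$ (a ``high-degree'' elementary symmetric in the $x_{gh}$'s) as a linear combination of lower-degree $\SigSubset{j+r}$ modulo $\Iviz$. The natural tool is Lemma~\ref{lem:prodxghj_kGeqnG_kHeqnHMinusi}, which, under the hypotheses, reduces any product of $d+1$ distinct variables $x_{gh}$ to a signed sum of products of at most $d$ of them. The plan is to apply this lemma pointwise and then sum cleverly so that all the products reassemble into full $\SigSubset{\cdot}$ terms.

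Concretely, I would fix $g \in V(\cG)$ and sum Lemma~\ref{lem:prodxghj_kGeqnG_kHeqnHMinusi} over all pairs $(T, S')$ of disjoint subsets of $V(\cH)$ with $\card{T} = d+1$ and $\card{S'} = j$. For each such pair, multiply the congruence from the lemma by $\prod_{h \in S'} x_{gh}$ to obtain
\begin{equation*}
  \prod_{h \in S'} x_{gh} \cdot \prod_{h \in T} x_{gh}
  \equiv \sum_{r=0}^{d} (-1)^{d+r}
  \sum_{\substack{U \subseteq T \\ \card{U} = r}} \prod_{h \in S' \cup U} x_{gh}
  \mod \Iviz,
\end{equation*}
and then sum over $(T, S')$. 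This is the main computational step.

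Next comes the bookkeeping. On the left, each $W \subseteq V(\cH)$ with $\card{W} = d+j+1$ arises from $\binom{d+j+1}{d+1}$ choices of $(T, S')$ with $T \cup S' = W$, so the left-hand side equals $\binom{d+j+1}{d+1}\SigSubset{d+j+1}$. On the right, using $x_{gh}^2 \equiv x_{gh}$ from~\eqref{eq:xgh} (recall $S'$ and $U$ are automatically disjoint), each subset $V \subseteq V(\cH)$ with $\card{V} = j + r$ appears once for each way to split $V = S' \sqcup U$ (factor $\binom{j+r}{r}$) and each way to extend $U$ to a $T$ of size $d+1$ inside $V(\cH) \setminus V$ (factor $\binom{n_\cH - j - r}{d+1-r}$). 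Collecting, I obtain
\begin{equation*}
  \binom{d+j+1}{d+1} \SigSubset{d+j+1}
  \equiv \sum_{r=0}^{d} (-1)^{d+r}
  \binom{n_\cH - j - r}{d+1-r} \binom{j+r}{r} \SigSubset{j+r}
  \mod \Iviz.
\end{equation*}

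The last step is a purely algebraic binomial identity: dividing by $\binom{d+j+1}{d+1}$ and showing
\begin{equation*}
  \frac{\binom{n_\cH - j - r}{d+1-r} \binom{j+r}{r}}{\binom{d+j+1}{d+1}}
  = \binom{n_\cH}{d+j+1}\,\frac{\binom{d+1}{r}}{\binom{n_\cH}{j+r}},
\end{equation*}
which follows immediately by expanding both sides as ratios of factorials and observing they both equal $\frac{(d+1)!\,(j+r)!\,(n_\cH-j-r)!}{r!\,(d+1-r)!\,(d+j+1)!\,(n_\cH-d-j-1)!}$. The ``obstacle'' here is only the careful counting in the bookkeeping step, since one must verify that Lemma~\ref{lem:prodxghj_kGeqnG_kHeqnHMinusi} (which treats disjoint products) is applied in a way consistent with the subset decomposition; once the counts are correctly tracked, the final identity is a straightforward factorial cancellation.
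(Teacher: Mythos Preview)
Your proof is correct and follows essentially the same approach as the paper: apply Lemma~\ref{lem:prodxghj_kGeqnG_kHeqnHMinusi} to factorizations of the degree-$(d+j+1)$ monomials and then count. The only difference is bookkeeping: the paper fixes \emph{one} partition $S = T \cup W$ per subset~$S$ and then invokes symmetry in $V(\cH)$ to conclude the result is a combination of the~$\SigSubset{j+r}$, whereas you sum over \emph{all} disjoint pairs $(T,S')$, which makes the symmetry automatic at the cost of an extra factor $\binom{d+j+1}{d+1}$ on the left that you cancel via the final binomial identity.
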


\begin{proof}
  For each $\prod_{h \in S} x_{gh}$ in the definition of
  $\SigSubset{d+j+1}$, we fix an arbitrary partition $S = T \cup W$ with $T$ and $W$
  disjoint in a way that $\card{T}=d+1$ and $\card{W}=j$. Therefore, we
  obtain
  \begin{equation*}
    \SigSubset{d+j+1} =
    \sum_{\substack{S \subseteq V(\cH)\\\card{S}=d+j+1}}\;\prod_{h \in S} x_{gh}
    = \sum_{\substack{T \cup W \subseteq V(\cH)\\ \text{$T$, $W$ disjoint} \\\card{T}=d+1\\\card{W}=j}}
    \biggl(\,\prod_{h \in T} x_{gh} \biggr)
    \biggl(\,\prod_{h' \in W} x_{gh'} \biggr).
  \end{equation*}
  With
  Lemma~\ref{lem:prodxghj_kGeqnG_kHeqnHMinusi}, we can reformulate
  this to
  \begin{equation}
    \label{eq:sumsumsumOfSigmaionej}
    \SigSubset{d+j+1} \equiv
    \sum_{\substack{T \cup W \subseteq V(\cH)\\ \text{$T$, $W$ disjoint} \\\card{T}=d+1\\\card{W}=j}}\;
    \sum_{r=0}^{d} (-1)^{d+r}
    \sum_{\substack{U \subseteq T\\\card{U}=r}}
    \biggl(\,\prod_{h \in T} x_{gh} \biggr)
    \biggl(\,\prod_{h' \in W} x_{gh'} \biggr)
    \mod \Iviz.
  \end{equation}
  
  Due to symmetry in $h\in V(\cH)$, minimum degree $j$ and maximum
  degree $d+j$ of the right-hand side, we can rewrite \eqref{eq:sumsumsumOfSigmaionej} to a
  representation
  \begin{equation*}
    \SigSubset{d+j+1} \equiv  
    \sum_{r=0}^{d} (-1)^{d+r}
    \beta_r \SigSubset{j+r}
    \mod \Iviz
  \end{equation*}
  for some coefficients $\beta_r \in \Z$.

  In order to determine these $\beta_r$, we count the number of
  power products of degree $j+r$ on the right-hand side of
  \eqref{eq:sumsumsumOfSigmaionej}. There are $\binom{n_\cH}{d+j+1}$
  possible choices for $S$, only the one particular fixed 
  partition~$S = T \cup W$ and $\binom{d+1}{r}$ possible choices for
  $U$ out of~$T$. Hence there are
  $\binom{n_\cH}{d+j+1}\binom{d+1}{r}$ power products of degree $j+r$
  appearing in \eqref{eq:sumsumsumOfSigmaionej}, all of which have the
  same sign. Due to the fact that $\SigSubset{j+r}$ contains
  $\binom{n_\cH}{j+r}$ monomials, this implies that
  \begin{equation*}
    \beta_r = \binom{n_\cH}{d+j+1}\binom{d+1}{r} \bigg/ \binom{n_\cH}{j+r} .
    \end{equation*}
\end{proof}

As mentioned, the lemmata above provide ``reduction rules'' for some
quantities $\SigSubset{i}$ or products of such quantities. We now
derive explicit formulas for particular instances.

\begin{remark}
  \label{remark:rules:SigSubset:d1}
  Suppose we have $d=1$, i.e., our full set-up is
  $k_\cG = n_\cG \geq 1$ and $k_\cH = n_\cH-1 \geq 1$. Then, because of
  Lemma~\ref{lem:reduceSigTooLarge} (with $d=1$ and $j=0$) and
  Lemma~\ref{lem:ProdOfSumsAsSumOfSums} (with $i=j=1$,
  see also Remark~\ref{remark:ProdOfSumsAsSumOfSums:small-values}),
  and because $\SigSubset{0} = 1$,
  we have
  \begin{align*}
    \SigSubset{2}
    &\equiv -\frac{1}{2}n_\cH(n_\cH-1)\SigSubset{0} + (n_\cH-1)\SigSubset{1}
      = -\frac{1}{2}n_\cH(n_\cH-1) + (n_\cH-1)\SigSubset{1}
      \mod \Iviz \\    
    \SigSubset{1}^2
    &\equiv \SigSubset{1} + 2\SigSubset{2}
      \equiv (2n_\cH-1)\SigSubset{1} - n_\cH(n_\cH-1)
      \mod \Iviz.
  \end{align*}
\end{remark}

\begin{remark}
  \label{remark:rules:SigSubset:d2}
  Suppose we have $d=2$, i.e., our full set-up is
  $k_\cG = n_\cG \geq 1$ and $k_\cH = n_\cH-2 \geq 1$. Then, because of
  Lemma~\ref{lem:reduceSigTooLarge} and
  Lemma~\ref{lem:ProdOfSumsAsSumOfSums}
  (see also Remark~\ref{remark:ProdOfSumsAsSumOfSums:small-values})
  we have
    \begin{align*}
    \SigSubset{3} &\equiv 
    \frac{1}{3!\, (n_\cH-3)!}\bigl( 
    n_\cH!\, \SigSubset{0} - 3(n_\cH - 1)!\SigSubset{1}
    + 6(n_\cH - 2)!\, \SigSubset{2}
    \bigr) \\
    &\equiv 
    \tfrac{1}{6}n_\cH(n_\cH-1)(n_\cH-2)\SigSubset{0} 
    - \tfrac{1}{2}(n_\cH - 1)(n_\cH-2)\SigSubset{1} +
    (n_\cH - 2)\SigSubset{2}
    \mod \Iviz
    \intertext{and}  
    \SigSubset{4} &\equiv 
    \frac{1}{4!\, (n_\cH-4)!}\bigl( 
    (n_\cH-1)!\, \SigSubset{1} - 6(n_\cH - 2)!\, \SigSubset{2}
    + 18(n_\cH - 3)!\, \SigSubset{3}
    \bigr) \\ 
    &\equiv
    \frac{1}{4!\, (n_\cH-4)!}\big( 
    3n_\cH!\,\SigSubset{0} - 8(n_\cH - 1)!\, \SigSubset{1}
    + 12(n_\cH - 2)!\, \SigSubset{2}
    \big) \\ 
    &\equiv
    \tfrac{1}{8}n_\cH(n_\cH-1)(n_\cH-2)(n_\cH-3)\SigSubset{0}
    - \tfrac{1}{3}(n_\cH-1)(n_\cH-2)(n_\cH-3)\SigSubset{1} + \\
    &\qquad \qquad  \qquad \qquad
    \tfrac{1}{2}(n_\cH-2)(n_\cH-3)\SigSubset{2} \mod \Iviz\\
    \end{align*}
    as well as
    \begin{align*}
    \SigSubset{1}^2 &\equiv  \SigSubset{1} + 2\SigSubset{2} \mod \Iviz\\
    \SigSubset{2}\SigSubset{1} &\equiv
    2\SigSubset{2} + 3\SigSubset{3} \\
    &\equiv
    \tfrac{1}{2}n_\cH(n_\cH-1)(n_\cH-2)\SigSubset{0} 
    - \tfrac{3}{2}(n_\cH - 1)(n_\cH-2)\SigSubset{1} + \\
    &\qquad \qquad  \qquad \qquad
    (3n_\cH - 4)\SigSubset{2}
    \mod \Iviz \\
    \SigSubset{2}^2 &\equiv
    \SigSubset{2} + 6\SigSubset{3} + 6\SigSubset{4} \\ 
    &\equiv
    \tfrac{1}{4}(3n_\cH-5)n_\cH(n_\cH-1)(n_\cH-2)\SigSubset{0}
    - (2n_\cH-3)(n_\cH-1)(n_\cH-2)\SigSubset{1} + \\
    &\qquad \qquad  \qquad \qquad
    (1 + 3(n_\cH-1)(n_\cH-2))\SigSubset{2} \mod \Iviz.
    \end{align*}
    As usual, we have $\SigSubset{0} = 1$ everywhere.
\end{remark}

\begin{remark}\label{remark:sigma-calculus}
  Let us fix $d$, i.e., our full set-up is
  $k_\cG = n_\cG \geq 1$ and $k_\cH = n_\cH-d \geq 1$,
  and let us fix $g \in V(\cG)$.

  Then,
  more systematically speaking, whenever $f$ is a finite $\K$-linear
  combination of terms of the form $\SigSubset{i}$ and
  $\SigSubset{i}\,\SigSubset{j}$ for non-negative integers $i$ and $j$,
  we can reduce~$f$ to a form
  \begin{equation*}
    f \equiv \sum_{i=0}^d \phi_i \SigSubset{i}
    \mod \Iviz
  \end{equation*}
  for efficiently computable~$\phi_i \in \K$.

  The idea is to use Lemma~\ref{lem:ProdOfSumsAsSumOfSums} for
  $\SigSubset{i}\,\SigSubset{j}$ in order to get rid of these
  products and replace them by terms of the form $\SigSubset{i}$.
  After this step, one can repeatedly use
  Lemma~\ref{lem:prodxghj_kGeqnG_kHeqnHMinusi} in order to replace all
  $\SigSubset{i}$ for $i > d$ by linear combinations of $\SigSubset{i}$
  with $i \le d$. All these operations are efficient; the coefficients of
  the individual steps are given directly in
  Lemmata~\ref{lem:ProdOfSumsAsSumOfSums}
  and~\ref{lem:prodxghj_kGeqnG_kHeqnHMinusi}.
\end{remark}

Finally let us mention that an implementation of the arithmetic described 
in the previous
remark, for example with SageMath~\cite{sagemath}, is handy:
It makes it easily possible to verify the results of
Remarks~\ref{remark:rules:SigSubset:d1}
and~\ref{remark:rules:SigSubset:d2}.

This completes the section on our auxiliary results which we need in
the following to prove our certificates.

\subsection{Certificates for \texorpdfstring{$k_\cG = n_\cG$}{kG=nG} and \texorpdfstring{$k_\cH = n_\cH$}{kH=nH}}
\label{sec:sosCertificaten_kGeqnG_kHeqnH_easy}
The easiest and almost trivial case is the one with $k_\cG = n_\cG$ and 
$k_\cH = n_\cH$, so $d = 0$.
We get the following certificate and
therefore have proven with our method that
Vizing's conjecture holds in this case.

\begin{theorem}\label{thm:sosCertificaten_kGeqnG_kHeqnH_easy}
    For $k_\cG = n_\cG \geq 1$ and $k_\cH = n_\cH \geq 1$,
    Vizing's conjecture is true as the polynomials
    \begin{equation*}
    s_{g} = 0
    \quad\text{for $g \in V(\cG)$}
    \end{equation*}
    are a $0$-sos certificate of $\fviz$.
\end{theorem}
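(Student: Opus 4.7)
The plan is to observe that a $0$-sos certificate with all $s_g = 0$ is simply the assertion that $\fviz \equiv 0 \mod \Iviz$, so the entire task reduces to checking this single congruence. I would proceed as follows.

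First, I would invoke Lemma~\ref{lem:prodxghj_kGeqnG_kHeqnHMinusi} in the extremal case $d = 0$. For every $g \in V(\cG)$ and every singleton $T = \{h\} \subseteq V(\cH)$, the lemma gives $1 - x_{gh} \in \Iviz$, that is $x_{gh} \equiv 1 \mod \Iviz$; this is exactly the first statement of Remark~\ref{rem:Prod_of_xgh}. Intuitively, since $k_\cG = n_\cG$ forces every graph $G \in \cG$ to be edgeless (Lemma~\ref{lem:eggPrimeZero_kGeqnG}), each vertex of $G \Box H$ must dominate itself in any dominating set projected onto a fixed $g$-fiber, and similarly $k_\cH = n_\cH$ forces $H$ to be edgeless, so the unique dominating set of $G \Box H$ is the full vertex set.

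Second, I would substitute this into the definition of $\fviz$:
\begin{equation*}
  \fviz = \biggl(\,\sum_{gh \in V(\cG) \times V(\cH)} x_{gh} \biggr) - k_\cG k_\cH
  \equiv n_\cG n_\cH - n_\cG n_\cH = 0 \mod \Iviz,
\end{equation*}
using $k_\cG k_\cH = n_\cG n_\cH$. Hence $\fviz \equiv \sum_{g \in V(\cG)} 0^2 \mod \Iviz$, so the collection $(s_g)_{g \in V(\cG)}$ with $s_g = 0$ is indeed a $0$-sos certificate in the sense of Definition~\ref{definition:ellsos}. Combining this with Theorem~\ref{proposition:VizSDPConj} (or Corollary~\ref{proposition:VizSDPlsos}) yields Vizing's conjecture for these parameters.

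There is essentially no obstacle in this case: the work is entirely hidden in the preparatory lemmata of Section~\ref{sec:sigma-calculus}, and the proof serves mostly as a sanity check that the $\sigma$-calculus machinery collapses to the trivial identity when $d = 0$.
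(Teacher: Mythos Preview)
Your proof is correct and follows essentially the same approach as the paper: invoke Lemma~\ref{lem:prodxghj_kGeqnG_kHeqnHMinusi} (via Remark~\ref{rem:Prod_of_xgh}) with $d=0$ to get $x_{gh}\equiv 1\bmod\Iviz$, then substitute into $\fviz$ to obtain $\fviz\equiv n_\cG n_\cH - k_\cG k_\cH = 0\bmod\Iviz$. The paper's proof is slightly terser and omits the graph-theoretic intuition and the closing appeal to Theorem~\ref{proposition:VizSDPConj}, but the argument is the same.
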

Note that we can simplify this $0$-sos certificate of
Theorem~\ref{thm:sosCertificaten_kGeqnG_kHeqnH_easy} to an empty sum
using no polynomial, but we give the formulation of
Theorem~\ref{thm:sosCertificaten_kGeqnG_kHeqnH_easy} to highlight the
similarity to the other certificates we will present in this section.

\begin{proof}[Proof of Theorem~\ref{thm:sosCertificaten_kGeqnG_kHeqnH_easy}]
    We have $x_{gh} \equiv 1 \mod \Iviz$ for all $g \in V(\cG)$ and 
    $h\in V(\cH)$ as already mentioned in Remark~\ref{rem:Prod_of_xgh} 
    due to Lemma~\ref{lem:prodxghj_kGeqnG_kHeqnHMinusi}.
    Hence we obtain
    \begin{align*}
    \fviz &= -k_\cG k_\cH + \sum_{g \in V(\cG)} \sum_{h \in V(\cH) } x_{gh}
    \equiv -k_\cG k_\cH + n_\cG n_\cH 
    = 0  
    = \sum_{g \in V(\cG)} s_g^2 \mod \Iviz,
    \end{align*}
    so the $s_g$ form indeed a $0$-sos certificate for $\fviz$. 
\end{proof}

Note that the certificate of
Theorem~\ref{thm:sosCertificaten_kGeqnG_kHeqnH_easy} has the
lowest degree possible.

\subsection{Certificates for \texorpdfstring{$k_\cG = n_\cG$}{kG=nG} and \texorpdfstring{$k_\cH = n_\cH-1$}{kH=nH-1}}
\label{sec:sosCertificaten_kGeqnG_kHeqnHminus1_easy}
The easiest non-trivial case is the one with $k_\cG = n_\cG$ and 
$k_\cH = n_\cH-1$, so $d=1$. Using
the above machinery (explained in the methodology Section~\ref{sec:methodology})
we first found a complicated sum-of-squares certificate, 
which is presented in Appendix~\ref{sec:firstFoundMoreComplexCertificates_d1}.
We were eventually able to transform this complicated certificate to the
following much easier
certificate and 
therefore have proven with our method that Vizing's conjecture holds in this 
case.

\begin{theorem}\label{thm:sosCertificaten_kGeqnG_kHeqnHminus1_easy}
  For $k_\cG = n_\cG \geq 1$ and $k_\cH = n_\cH-1 \geq 1$,
  Vizing's conjecture is true as the polynomials
    \begin{equation*}
      s_{g} = \biggl( \sum_{h \in V(\cH)}  x_{gh} \biggr) - (n_\cH-1)
      \quad\text{for $g \in V(\cG)$}
    \end{equation*}
    are a $1$-sos certificate of $\fviz$.
\end{theorem}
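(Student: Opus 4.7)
The plan is to verify directly that $\sum_{g \in V(\cG)} s_g^2 \equiv \fviz \mod \Iviz$, using the sigma calculus developed in Section~\ref{sec:sigma-calculus}. First I would rewrite the certificate in compact form: observing that $\sum_{h \in V(\cH)} x_{gh} = \SigSubset{1}$, we have $s_g = \SigSubset{1} - (n_\cH - 1)$ for each $g \in V(\cG)$, so that
\begin{equation*}
  s_g^2 = \SigSubset{1}^2 - 2(n_\cH - 1)\SigSubset{1} + (n_\cH - 1)^2.
\end{equation*}

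Next I would apply the reduction rule from Remark~\ref{remark:rules:SigSubset:d1}, which gives
\begin{equation*}
  \SigSubset{1}^2 \equiv (2n_\cH - 1)\SigSubset{1} - n_\cH(n_\cH - 1) \mod \Iviz.
\end{equation*}
Substituting and collecting terms, the coefficient of $\SigSubset{1}$ simplifies to $(2n_\cH - 1) - 2(n_\cH - 1) = 1$, while the constants collapse to $(n_\cH-1)^2 - n_\cH(n_\cH-1) = -(n_\cH - 1)$. Thus $s_g^2 \equiv \SigSubset{1} - (n_\cH - 1) \mod \Iviz$.

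Summing over $g \in V(\cG)$ yields $\sum_{g \in V(\cG)} s_g^2 \equiv \sum_{g \in V(\cG)} \SigSubset{1} - n_\cG(n_\cH - 1) \mod \Iviz$. Since $\sum_{g \in V(\cG)} \SigSubset{1} = \sum_{gh \in V(\cG) \times V(\cH)} x_{gh}$ and $k_\cG k_\cH = n_\cG(n_\cH - 1)$, this is exactly $\fviz$ by Definition~\ref{def:fStar}. Hence the $s_g$ form a $1$-sos certificate, and Vizing's conjecture holds in this regime by Corollary~\ref{proposition:VizSDPlsos}. There is no real obstacle here: once the sigma calculus of Section~\ref{sec:sigma-calculus} is available, the proof is a short and mechanical computation, and the only content is the clean identification of the correct linear combination, which was already extracted from the SDP via the methodology of Section~\ref{sec:methodology}.
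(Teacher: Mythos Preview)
Your proof is correct and follows essentially the same approach as the paper: rewrite $s_g$ in terms of $\SigSubset{1}$, apply the reduction $\SigSubset{1}^2 \equiv (2n_\cH-1)\SigSubset{1} - n_\cH(n_\cH-1)$ from Remark~\ref{remark:rules:SigSubset:d1}, simplify to $s_g^2 \equiv \SigSubset{1} - (n_\cH-1)$, and sum over $g$ to recover $\fviz$. The only difference is cosmetic---you spell out the coefficient arithmetic a bit more explicitly and append the reference to Corollary~\ref{proposition:VizSDPlsos}.
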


\begin{proof}
  The polynomials $s_g$ can alternatively be written as
  $s_g = \SigSubset{1} - (n_\cH-1)$. Using 
  Remark~\ref{remark:rules:SigSubset:d1} yields
  \begin{align*}
    s_g^2 
    &= (\SigSubset{1} - (n_\cH-1))^2 
    = \SigSubset{1}^2 - 2(n_\cH-1)\SigSubset{1} + (n_\cH-1)^2 \\
    &\equiv (2n_\cH-1)\SigSubset{1} - n_\cH(n_\cH-1) - 2(n_\cH-1)\SigSubset{1} + (n_\cH-1)^2 \\
    &= \SigSubset{1} -(n_\cH - 1)
      \mod \Iviz.
  \end{align*}
  Consequently, this evaluates to
    \begin{equation*}
    \sum_{g \in V(\cG)} s_g^2 
    = \sum_{g \in V(\cG)} \bigl( \SigSubset{1} -(n_\cH - 1) \bigr)
    = -n_\cG(n_\cH - 1) + \sum_{g \in V(\cG)} \SigSubset{1} 
    = \fviz
      \mod \Iviz,
    \end{equation*}
    so the $s_g$ form indeed a $1$-sos certificate for $\fviz$. 
\end{proof}

Note that the certificate of
Theorem~\ref{thm:sosCertificaten_kGeqnG_kHeqnHminus1_easy} has the
lowest positive degree possible and furthermore only uses very particular
monomials of degree at most 1.

\subsection{Certificates for \texorpdfstring{$k_\cG = n_\cG$}{kG=nG} and \texorpdfstring{$k_\cH = n_\cH-2$}{kH=nH-2}}
\label{sec:sosCertificaten_kGeqnG_kHeqnHminus2_easy}

The next slightly more difficult case is the one for $k_\cG = n_\cG$ and 
$k_\cH = n_\cH-2$, so $d=2$.
Also in this case we first found a more complicated certificate (see
Appendix~\ref{sec:firstFoundMoreComplexCertificates_d2}) which we were able
to transform to the following simple certificate. 

\begin{theorem}\label{thm:sosCertificaten_kGeqnG_kHeqnHminus2_easy}
  For $k_\cG = n_\cG \geq 1$ and $k_\cH = n_\cH-2 \geq 1$,
  Vizing's conjecture is true as the polynomials
    \begin{equation*}
      s_{g} = \alpha
      + \beta \biggl( \sum_{h \in V(\cH)}   x_{gh} \biggr)
      + \gamma \biggl( \sum_{\set{h, h'} \subseteq V(\cH)} x_{gh}x_{gh'}\biggr)
      \quad\text{for $g \in V(\cG)$},
    \end{equation*}
    where
    \begin{align*}
      \alpha &= (n_\cH - 2)\bigl(n_\cH + \tfrac{1}{2}(n_\cH - 1)\sqrt{2}\bigr), \\
      \beta &= -\bigl( (2n_\cH-3) + (n_\cH - 2)\sqrt{2}\bigr), \\
      \gamma &= 2 + \sqrt{2},
    \end{align*}
    are a $2$-sos certificate of $\fviz$.
  \end{theorem}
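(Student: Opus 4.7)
The plan is to follow exactly the same template as the proof of
Theorem~\ref{thm:sosCertificaten_kGeqnG_kHeqnHminus1_easy}, but one
degree higher. Each candidate summand can be rewritten in the sigma-calculus as
$s_g = \alpha + \beta\,\SigSubset{1} + \gamma\,\SigSubset{2}$, so
$s_g^2$ expands into the six terms
$\alpha^2$, $2\alpha\beta\,\SigSubset{1}$, $2\alpha\gamma\,\SigSubset{2}$,
$\beta^2\,\SigSubset{1}^2$, $2\beta\gamma\,\SigSubset{2}\,\SigSubset{1}$
and $\gamma^2\,\SigSubset{2}^2$. Applying the three product reductions
$\SigSubset{1}^2$, $\SigSubset{2}\,\SigSubset{1}$ and $\SigSubset{2}^2$
from Remark~\ref{remark:rules:SigSubset:d2} lets us rewrite
$s_g^2$ modulo $\Iviz$ as a $\K$-linear combination of $1$,
$\SigSubset{1}$ and $\SigSubset{2}$ only (this is the general mechanism
sketched in Remark~\ref{remark:sigma-calculus}).

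Next, I would sum $\sum_{g \in V(\cG)} s_g^2$. On the other hand,
$\fviz = \sum_{g \in V(\cG)} \SigSubset{1} - n_\cG(n_\cH-2)$, so by
symmetry in $g$ the certification condition
$\sum_g s_g^2 \equiv \fviz \mod \Iviz$ is equivalent to the per-$g$
identity
\begin{equation*}
  s_g^2 \equiv \SigSubset{1} - (n_\cH-2) \mod \Iviz.
\end{equation*}
Matching the coefficients of $1$, $\SigSubset{1}$ and $\SigSubset{2}$
on both sides yields a system of three polynomial equations in the
three unknowns $\alpha$, $\beta$, $\gamma$ (with the scalars
$n_\cH$ appearing as parameters). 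Concretely, the coefficient of
$\SigSubset{2}$ must vanish, the coefficient of $\SigSubset{1}$ must
equal $1$, and the constant term must equal $-(n_\cH-2)$.

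The final step is simply to verify that the three values stated in the
theorem solve this $3\times 3$ system. Because the middle and top
equations are quadratic in the unknowns, it is to be expected (and is
the reason) that the solution involves $\sqrt{2}$; one can actually
read the structure of $\gamma = 2+\sqrt{2}$ off the $\SigSubset{2}^2$
coefficient, then back-substitute to determine $\beta$ and finally
$\alpha$. The main obstacle is nothing conceptual but rather the
bookkeeping: the $\SigSubset{2}\,\SigSubset{1}$ and $\SigSubset{2}^2$
reductions in Remark~\ref{remark:rules:SigSubset:d2} contain cubic
polynomials in $n_\cH$, so the verification is a somewhat long but
purely mechanical computation in $\K[n_\cH,\sqrt{2}]$. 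I would carry
it out either by expanding by hand and collecting, or, safer, by
performing the substitution and simplification in SageMath using the
sigma-calculus routines alluded to at the end of
Section~\ref{sec:sigma-calculus}; once all three coefficient equations
check out, the resulting identity is a $2$-sos certificate of
$\fviz$ by Definition~\ref{definition:ellsos}, and
Corollary~\ref{proposition:VizSDPlsos} then gives Vizing's conjecture
in this parameter range.
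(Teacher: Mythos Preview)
Your proposal is correct and follows essentially the same route as the paper: rewrite $s_g$ in terms of $\SigSubset{0}$, $\SigSubset{1}$, $\SigSubset{2}$, expand $s_g^2$, apply the product reductions of Remark~\ref{remark:rules:SigSubset:d2} to collapse everything to a $\K$-linear combination of $\SigSubset{0}$, $\SigSubset{1}$, $\SigSubset{2}$, and then match coefficients against $\fviz$. The paper even records the resulting $3\times 3$ system explicitly (your three coefficient conditions) as~\eqref{eq:systemForkHnHMinus2} and then checks that the stated $\alpha$, $\beta$, $\gamma$ satisfy it, exactly as you describe.
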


\begin{remark}
  We want to point out that 
  Theorem~\ref{thm:sosCertificaten_kGeqnG_kHeqnHminus2_easy} is true
  whenever $\alpha$, $\beta$, $\gamma$ are solutions to the system of equations 
  \begin{subequations}
    \label{eq:systemForkHnHMinus2}
    \begin{align}
      -(n_\cH-2) &= \alpha^2 + \frac{1}{4}n_\cH(n_\cH-1)(n_\cH-2)(3n_\cH-5)\gamma^2 \hspace{15pt} \nonumber\\
      &\phantom{= \alpha^2}\; + n_\cH(n_\cH-1)(n_\cH-2)\beta\gamma,\\
        1 &= \beta^2 + 2\alpha\beta - (n_\cH-1)(n_\cH-2)(2n_\cH-3)\gamma^2 \nonumber\\
      &\phantom{= \beta^2}\; - 3(n_\cH-1)(n_\cH-2)\beta\gamma,\\
        0 &= 2\beta^2 + 2\alpha\gamma + (1+3(n_\cH-1)(n_\cH-2))\gamma^2 \nonumber\\
      &\phantom{= 2\beta^2}\; + 2(3n_\cH-4)\beta\gamma,
    \end{align}
  \end{subequations} 
  and that in Theorem~\ref{thm:sosCertificaten_kGeqnG_kHeqnHminus2_easy}
  one particular easy solution is stated.
\end{remark}

\begin{proof}[Proof of Theorem~\ref{thm:sosCertificaten_kGeqnG_kHeqnHminus2_easy}]
  The polynomials $s_g$ can alternatively be written as
  $s_g = \alpha\SigSubset{0} + \beta\SigSubset{1} +\gamma\SigSubset{2}$.
  (Note that $\SigSubset{0} = 1$.)
  Using Remark~\ref{remark:rules:SigSubset:d1} yields
  \begin{align*}
    s_g^2 
    &= (\alpha + \beta\SigSubset{1} +\gamma\SigSubset{2})^2 \\
    &= \alpha^2 + \beta^2\SigSubset{1}^2 + \gamma^2\SigSubset{2}^2
    + 2\alpha\beta\SigSubset{1} + 2\alpha\gamma\SigSubset{2} + 2\beta\gamma\SigSubset{1}\SigSubset{2} \\
    &\equiv
    \alpha^2 + 
    \beta^2(\SigSubset{1} + 2\SigSubset{2}) +
    2\alpha\beta\SigSubset{1} + 
    2\alpha\gamma\SigSubset{2}  \\
    &\phantom{\equiv \alpha^2}\;+ \gamma^2
    \bigl( 
    \tfrac{1}{4}(3n_\cH-5)n_\cH(n_\cH-1)(n_\cH-2)\SigSubset{0}
    - (2n_\cH-3)(n_\cH-1)(n_\cH-2)\SigSubset{1} \\
    &\hspace*{5em} +
    (1 + 3(n_\cH-1)(n_\cH-2))\SigSubset{2}    
    \bigr) \\
    &\phantom{\equiv \alpha^2}\; + 2\beta\gamma
    \bigl(   \tfrac{1}{2}n_\cH(n_\cH-1)(n_\cH-2)\SigSubset{0} 
    - \tfrac{3}{2}(n_\cH - 1)(n_\cH-2)\SigSubset{1}\\
    &\hspace*{5.5em} +
    (3n_\cH - 4)\SigSubset{2}
    \bigr) \mod \Iviz
  \end{align*}
  and consequently, we evaluate to
    \begin{align*}
     \sum_{g \in V(\cG)} s_g^2 
    &\equiv
    \bigl( 
    \alpha^2 +
    \gamma^2\tfrac{1}{4}(3n_\cH-5)n_\cH(n_\cH-1)(n_\cH-2)\\
    &\phantom{\equiv(\alpha^2}\hspace*{0.35em} +
    \beta\gamma n_\cH(n_\cH-1)(n_\cH-2)
    \bigr) \sum_{g \in V(\cG)} \SigSubset{0} \\
    &\phantom{\equiv}\;\, + \bigl( 
    \beta^2 + 
    2\alpha\beta -
    \gamma^2(2n_\cH-3)(n_\cH-1)(n_\cH-2) \\
    &\phantom{\equiv + (\beta^2}\hspace*{0.665em} -
    3\beta\gamma (n_\cH - 1)(n_\cH-2)
    \bigr) \sum_{g \in V(\cG)} \SigSubset{1} \\
    &\phantom{\equiv}\;\, + \bigl( 
    2\beta^2  +
    2\alpha\gamma +
    \gamma^2(1 + 3(n_\cH-1)(n_\cH-2)) \\
    &\phantom{\equiv + (2\beta^2}\hspace*{0.665em} +
    2\beta\gamma (3n_\cH - 4)
    \bigr) \sum_{g \in V(\cG)} \SigSubset{2}
      \mod \Iviz.
    \end{align*}
    Due to the particular values of $\alpha$, $\beta$ and $\gamma$ this simplifies to
    \begin{align*}
      \sum_{g \in V(\cG)} s_g^2
      &\equiv -(n_\cH-2)\sum_{g \in V(\cG)} \SigSubset{0} + \sum_{g \in V(\cG)} \SigSubset{1} \\
      &= -n_\cG k_\cH + \sum_{g \in V(\cG)} \SigSubset{1}
      = \fviz
      \mod \Iviz.
    \end{align*}
  \end{proof}




Note that for all computationally considered instances of the form $k_{\cG}=n_{\cG}$ and $k_{\cH}=n_{\cH}-2$, the SDP
for $\ell=1$ was infeasible, so for all of those instances there
seems to be no $1$-sos certificate and one really needs monomials of degree
2 in the $s_i$ in order to obtain a certificate.    
Nevertheless, degree 2 is still very low. Furthermore also in this sum-of-squares
certificate only very particular monomials are used; it can be considered
sparse therefore. This is confirmed by the following example.

\begin{example}
If we consider the case $k_\cG= n_\cG = 4$, $n_\cH = 5$ and $k_\cH=3$, there are $432$ monomials of degree at most 2 but the certificate of Theorem~\ref{thm:sosCertificaten_kGeqnG_kHeqnHminus2_easy} uses only $61$ of them.
\end{example}

\subsection{Computational Certificates for \texorpdfstring{$k_\cG = 
n_\cG$}{kG=nG} and \texorpdfstring{$k_\cH = n_\cH-d$}{kH=nH-d}}
\label{sec:sosCertificaten_kGeqnG_kHeqnHMinusd}

When taking a closer look at the certificates in
Theorem~\ref{thm:sosCertificaten_kGeqnG_kHeqnH_easy}, 
Theorem~\ref{thm:sosCertificaten_kGeqnG_kHeqnHminus1_easy} and
Theorem~\ref{thm:sosCertificaten_kGeqnG_kHeqnHminus2_easy}, one can
guess a structure from the certificates found so far.  In
particular there seems to be a $d$-sos certificate
for the case $k_\cG = n_\cG$ and
$k_\cH = n_\cH-d$.
Hence, at this point, we can formulate a conjecture which
intuitively seems to be the ``correct'' generalization.

\begin{conjecture}\label{con:sosCertificaten_kGeqnG_kHeqnHminusi_easy}
  For $k_\cG = n_\cG \ge 1$ and $k_\cH = n_\cH-d \ge 1$ with $d\geq 0$,
  Vizing's conjecture is true as the polynomials
    \begin{equation*}
    s_{g} = \sum_{i=0}^{d} \alpha_i \biggl(\sum_{\substack{S \subseteq 
    V(\cH)\\\card{S}=i}}\;\prod_{h \in S} x_{gh} \biggr)
    \quad\text{for $g \in V(\cG)$},
    \end{equation*}
    where $\alpha_i$ are the solutions to a certain system of polynomial 
    equations,
    are a $d$-sos certificate of $\fviz$.
\end{conjecture}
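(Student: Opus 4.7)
The plan is to reduce the existence question to a one-dimensional Lagrange interpolation problem, working \emph{per} $g \in V(\cG)$ and exploiting that when $k_\cG = n_\cG$ the variable blocks $(x_{gh})_{h \in V(\cH)}$ for different $g$ become rigidly constrained modulo $\Iviz$. Specifically, I would aim for the stronger per-$g$ congruence
\begin{equation*}
  s_g^2 \;\equiv\; \SigSubset{1} - k_\cH \mod \Iviz
  \quad\text{for each $g \in V(\cG)$;}
\end{equation*}
summing over $g$ and using $k_\cG k_\cH = n_\cG k_\cH$ together with $\SigSubset{0} = 1$ then gives $\sum_g s_g^2 \equiv \fviz \mod \Iviz$, so only the coefficients $\alpha_i$ remain to be chosen.

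Because $\Iviz$ contains a square-free univariate polynomial in each variable, it is radical (Lemma~\ref{lem_kreuzer_rad}), so by Remark~\ref{remark:HilbertsNullstellensatz} the per-$g$ congruence is equivalent to the polynomial $s_g^2 - (\SigSubset{1} - k_\cH)$ vanishing on every point of $\variety{\Iviz}$. On any such point $z^\ast$, \eqref{eq:xgh} forces $x_{gh}^\ast \in \set{0,1}$, and applying Lemma~\ref{lem:prodxghj_kGeqnG_kHeqnHMinusi} to every $(d{+}1)$-subset of $V(\cH)$ forces at most $d$ of them to be zero. Thus $k := \sum_{h \in V(\cH)} x_{gh}^\ast$ ranges over the $d{+}1$ integers in $\set{k_\cH, k_\cH+1,\ldots,n_\cH}$; and since $\SigSubset{i}$ is the $i$-th elementary symmetric polynomial in these $n_\cH$ binary variables, it evaluates to the integer $\binom{k}{i}$. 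Setting $p(k) := \sum_{i=0}^{d} \alpha_i \binom{k}{i}$, the desired congruence therefore reduces to the finite scalar identity
\begin{equation*}
  p(k)^2 \;=\; k - k_\cH \quad\text{for every $k \in \set{k_\cH, k_\cH+1, \ldots, n_\cH}$,}
\end{equation*}
which is the "certain system of polynomial equations" in the conjecture (an equivalent quadratic system in $\alpha_0,\ldots,\alpha_d$ is obtained by rewriting $s_g^2$ in the basis $\set{\SigSubset{r}}_{r=0}^{d}$ via the sigma calculus of Remark~\ref{remark:sigma-calculus}, which is precisely the route that produced the explicit system~\eqref{eq:systemForkHnHMinus2} for $d=2$).

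Solvability of the reduced problem is now a clean Lagrange interpolation: the polynomials $\binom{k}{0}, \binom{k}{1}, \ldots, \binom{k}{d}$ form a basis of the space of polynomials in $k$ of degree at most $d$, so for any choice of signs $\epsilon_{k_\cH}, \ldots, \epsilon_{n_\cH} \in \set{-1,+1}$ there is a unique real polynomial $p$ of degree at most $d$ with $p(k) = \epsilon_k\sqrt{k - k_\cH}$ at the given $d{+}1$ points; reading off its coordinates in the $\binom{k}{i}$-basis yields real $\alpha_0,\ldots,\alpha_d$. Taking all $\epsilon_k = +1$ produces one canonical tuple, while the $2^{d+1}$ sign freedom explains the multiple closed forms one can obtain; specializing to $d \in \set{0,1,2}$ reproduces exactly Theorems~\ref{thm:sosCertificaten_kGeqnG_kHeqnH_easy}, \ref{thm:sosCertificaten_kGeqnG_kHeqnHminus1_easy} and~\ref{thm:sosCertificaten_kGeqnG_kHeqnHminus2_easy}.

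The hard part is the bridge from congruence modulo $\Iviz$ to a pointwise identity in the single integer variable $k$: it simultaneously uses radicality of $\Iviz$, the decoupling Lemma~\ref{lem:eggPrimeZero_kGeqnG} forcing all edges of $\cG$ to vanish, the combinatorial description of the $x$-coordinate part of $\variety{\Iviz}$ from Lemma~\ref{lem:prodxghj_kGeqnG_kHeqnHMinusi}, and the observation that the only information about a $\set{0,1}$-vector visible to a symmetric polynomial $\SigSubset{i}$ is its Hamming weight. Once this reduction is in hand, existence of the $\alpha_i$ is immediate from interpolation, and the remaining work is cosmetic bookkeeping to pick a canonical sign pattern and present the $\alpha_i$ in a recognizable algebraic closed form.
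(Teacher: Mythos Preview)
Your argument is correct and in fact goes further than the paper does: the paper states this as a \emph{conjecture}, verifies it computationally for $d\le 4$ via Proposition~\ref{proposition:compute-certificate-kGeqnG_kHeqnHminusd}, and explicitly lists proving it for $d\ge 5$ as future work. Your evaluation-plus-interpolation argument settles it for all $d$.

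The two approaches differ in where they work. The paper stays on the ideal side: it uses the sigma calculus (Lemmata~\ref{lem:ProdOfSumsAsSumOfSums} and~\ref{lem:reduceSigTooLarge}, summarised in Remark~\ref{remark:sigma-calculus}) to rewrite $s_g^2$ as a $\K$-linear combination of $\SigSubset{0},\dots,\SigSubset{d}$ modulo~$\Iviz$, reads off the system~\eqref{eq:systemOfEquationsForConjecture} by coefficient comparison, and then \emph{solves} that system --- symbolically for $d\le 2$, computationally with SageMath for $d\in\{3,4\}$. What is missing in the paper is a uniform argument that the system is solvable for every~$d$. You obtain exactly that by passing to the variety side: radicality of $\Iviz$ reduces the congruence to a pointwise identity, Lemma~\ref{lem:prodxghj_kGeqnG_kHeqnHMinusi} pins down the admissible Hamming weights as $\{k_\cH,\dots,n_\cH\}$, symmetry collapses the dependence to the single integer~$k$, and Lagrange interpolation of $\pm\sqrt{k-k_\cH}$ at $d+1$ nodes trivially produces the required $\alpha_i$. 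This also explains the paper's observation in Remark~\ref{remark:question-alpha-d} about multiple solutions via sign choices on the square roots.

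One small correction: the sign freedom is $2^d$, not $2^{d+1}$, because at the node $k=k_\cH$ the value $\sqrt{0}=0$ carries no sign; this matches what the paper records computationally at the end of Remark~\ref{remark:question-alpha-d}.
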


Moreover, the proofs of
Theorems~\ref{thm:sosCertificaten_kGeqnG_kHeqnHminus1_easy}
and~\ref{thm:sosCertificaten_kGeqnG_kHeqnHminus2_easy} give rise to an
algorithmic approach for finding certificates. We formulate this as
the following proposition.

\begin{proposition}\label{proposition:compute-certificate-kGeqnG_kHeqnHminusd}
  Let $d$ be a non-negative integer. Then there is an algorithm that
  either finds a certificate of the form as given in
  Conjecture~\ref{con:sosCertificaten_kGeqnG_kHeqnHminusi_easy} or
  outputs that there is no certificate of that form.
\end{proposition}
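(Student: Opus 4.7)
My plan is to reduce the existence of a certificate of the conjectured form to a polynomial system in the coefficients $\alpha_0,\ldots,\alpha_d$ and then to invoke standard effective procedures from real algebraic geometry.

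First I would substitute the ansatz $s_g = \sum_{i=0}^{d} \alpha_i \SigSubset{i}$ into $\sum_{g \in V(\cG)} s_g^2$ and apply the sigma calculus of Remark~\ref{remark:sigma-calculus}: Lemma~\ref{lem:ProdOfSumsAsSumOfSums} collapses each product $\SigSubset{i}\SigSubset{j}$ into a sum of $\SigSubset{k}$'s, and Lemma~\ref{lem:reduceSigTooLarge} iteratively reduces every $\SigSubset{k}$ with $k>d$ to a linear combination of $\SigSubset{0},\ldots,\SigSubset{d}$. This yields an algorithmically computable normal form
\[
\sum_{g \in V(\cG)} s_g^2 \;\equiv\; \sum_{k=0}^{d} c_k(\alpha_0,\ldots,\alpha_d)\,\sum_{g \in V(\cG)} \SigSubset{k} \mod \Iviz,
\]
in which each $c_k$ is a homogeneous quadratic form in the $\alpha_i$ with coefficients that are explicit polynomials in $n_\cH$ produced by the above reductions.

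Next I would observe that the family $\{\sum_{g}\SigSubset{k}\}_{k=0}^{d}$ is linearly independent modulo $\Iviz$: for any fixed $g$, the square-free monomials $\prod_{h\in S}x_{gh}$ with $\card{S}\le d$ cannot be reduced further by $\Iviz$ (Lemma~\ref{lem:prodxghj_kGeqnG_kHeqnHMinusi} only simplifies products of at least $d+1$ such variables, and the remaining generators of $\Iviz$ involve disjoint sets of variables). Since $\fviz \equiv -(n_\cH-d)\sum_{g}\SigSubset{0} + \sum_{g}\SigSubset{1} \mod \Iviz$, matching coefficients turns the congruence $\sum_g s_g^2 \equiv \fviz \mod \Iviz$ into the polynomial system
\[
c_0 = -(n_\cH-d),\qquad c_1 = 1,\qquad c_k = 0 \text{ for } 2\le k\le d,
\]
which consists of $d+1$ quadratic equations in the $d+1$ unknowns $\alpha_0,\ldots,\alpha_d$ over the field $\Q(n_\cH)$.

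Finally the algorithm would run an effective real-algebraic decision procedure---for example a Gröbner basis computation combined with real-root isolation, or cylindrical algebraic decomposition---on this system to decide whether real solutions exist and to produce them explicitly when they do. The hard part will be handling the parameter $n_\cH$ uniformly, since one would like coefficients $\alpha_i$ that are algebraic over $\Q(n_\cH)$ and valid for all sufficiently large $n_\cH$; this requires either solving the system symbolically over $\Q(n_\cH)$ and checking that denominators and leading coefficients do not vanish identically, or invoking parametric real quantifier elimination. Both approaches are effective for fixed $d$ as the number of unknowns and the degrees of the defining equations are bounded in terms of $d$ alone, so the procedure ultimately outputs either the $\alpha_i$ of a valid certificate or a proof that no certificate of the conjectured form exists.
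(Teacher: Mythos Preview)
Your proposal is essentially the same as the paper's proof: both expand $s_g^2$ via the sigma calculus (Lemmata~\ref{lem:ProdOfSumsAsSumOfSums} and~\ref{lem:reduceSigTooLarge}) to a linear combination of $\SigSubset{0},\ldots,\SigSubset{d}$, sum over $g$, match coefficients against $\fviz$, and then hand the resulting polynomial system in $\alpha_0,\ldots,\alpha_d$ to a decision procedure. You are somewhat more explicit than the paper in two places---you justify the linear independence of $\{\sum_g\SigSubset{k}\}_{k=0}^d$ modulo $\Iviz$ before comparing coefficients, and you name concrete real-algebraic tools (CAD, parametric quantifier elimination) for the final step---whereas the paper simply says ``computing the variety associated to the system of equations'' and leaves it at that.
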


\begin{proof}
  We first describe our algorithm by following the proofs of
  Theorems~\ref{thm:sosCertificaten_kGeqnG_kHeqnHminus1_easy}
  and~\ref{thm:sosCertificaten_kGeqnG_kHeqnHminus2_easy}.

  Suppose $s_{g}$ is of the form as given in
  Conjecture~\ref{con:sosCertificaten_kGeqnG_kHeqnHminusi_easy}, so 
  \begin{equation*}
    s_{g} = \sum_{i=0}^{d} \alpha_i \SigSubset{i}
    \quad\text{for $g \in V(\cG)$},
  \end{equation*}
  where simply the definition of $\SigSubset{i}$ (see
  Definition~\ref{def:SigSubset_g_i}) was used.
  We now use binomial expansion for $s_{g}^2$. In
  the result there will be terms of the form
  $\SigSubset{i}\,\SigSubset{j}$ for $i$, $j \leq d$. We can now use
  the arithmetic described in Remark~\ref{remark:sigma-calculus} to end up with
  \begin{equation*}
    s_{g}^2 \equiv \sum_{i=0}^d \phi_i(\alpha)\, \SigSubset{i}
    \mod \Iviz
    \quad\text{for $g \in V(\cG)$}.
  \end{equation*}
  Here, $\phi_i(\alpha)$ is a polynomial in
  $\alpha = (\alpha_0, \dots, \alpha_d)$ for each $0 \leq i \leq d$.
  Note that all coefficients of this polynomial additionally depend on the
  variable~$n_\cH$.
  The formal summation over all ${g \in V(\cG)}$ is trivial; we obtain
  \begin{equation*}
    \sum_{g \in V(\cG)} s_{g}^2 \equiv \sum_{i=0}^d \phi_i(\alpha) \sum_{g \in V(\cG)}\SigSubset{i}
    \mod \Iviz.
  \end{equation*}
  In order to obtain a $d$-sos certificate, this has to be equal to
  \begin{equation*}
    \fviz = -(n_\cH-d)\sum_{g \in V(\cG)}\SigSubset{0}
    + \sum_{g \in V(\cG)}\SigSubset{1}.
  \end{equation*}
  Comparing the coefficients of
  $n_\cG = \sum_{g \in V(\cG)}\SigSubset{0}$ and the other
  $\sum_{g \in V(\cG)}\SigSubset{i}$ (for $1 \leq i \leq d$) yields
  the system of equations
  \begin{subequations}
   \label{eq:systemOfEquationsForConjecture}
  \begin{align}
    \phi_0(\alpha) &= -(n_\cH-d),\\
    \phi_1(\alpha) &= 1,\\
    \phi_i(\alpha) &= 0 \quad\text{for $2 \leq i \leq d$}.
  \end{align}
  \end{subequations}
  We want to point out that the existence of a real-valued solution
  $\alpha_0$, \dots, $\alpha_d$ (as functions in~$n_\cH$) is equivalent
  to the fact that to the $s_g$ being
  as in Conjecture~\ref{con:sosCertificaten_kGeqnG_kHeqnHminusi_easy} form
  a $d$-sos certificate.
  Therefore computing the variety associated to the system of
  equations~\eqref{eq:systemOfEquationsForConjecture}, i.e., finding all solutions of this system, 
  is the last step of an algorithm 
  that has the properties stated in
  Proposition~\ref{proposition:compute-certificate-kGeqnG_kHeqnHminusd},
  and the proof is completed.
\end{proof}

\begin{remark}
    The system of equations~\eqref{eq:systemOfEquationsForConjecture} 
    does not depend on $n_\cG$, but only on $d$ and $n_\cH$. 
    Hence, whenever we find a solution 
    to~\eqref{eq:systemOfEquationsForConjecture}---this might be
    for a fixed value of $n_\cH$ or parametrized in $n_\cH$---then
    this gives rise to a certificate for those values and
    \emph{all} possible values of $n_\cG=k_\cG$.
\end{remark}

Before we exploit the algorithm provided by
Proposition~\ref{proposition:compute-certificate-kGeqnG_kHeqnHminusd}
which finds a certificate of the 
form as given in Conjecture~\ref{con:sosCertificaten_kGeqnG_kHeqnHminusi_easy}, let us mention 
that it consists of two main steps:
The first step is to construct the system of 
equations~\eqref{eq:systemOfEquationsForConjecture} and the second is 
to 
find a solution to this system of equations.

Let us reconsider the proofs of 
Theorems~\ref{thm:sosCertificaten_kGeqnG_kHeqnHminus1_easy}
and~\ref{thm:sosCertificaten_kGeqnG_kHeqnHminus2_easy}. There, we already
have a particular certificate at hand, and we prove that it is in fact a certificate by 
performing essentially the first main step of the algorithm.
In fact the system of equations~\eqref{eq:systemForkHnHMinus2} corresponds 
to~\eqref{eq:systemOfEquationsForConjecture} for $d=2$ as the
variables
$(\alpha,\beta,\gamma)$ (of~\eqref{eq:systemForkHnHMinus2}) equal
$(\alpha_0,\alpha_1,\alpha_2)$ (of~\eqref{eq:systemOfEquationsForConjecture}). 
Even though the computations for proving the theorems above are tedious,
they are straight forward.

So, let us come back to the algorithm of 
Proposition~\ref{proposition:compute-certificate-kGeqnG_kHeqnHminusd}. For
finding a certificate for general $d \geq 3$ the situation is more difficult:
The computations get very
messy, so it seems infeasible to get the system of 
equations~\eqref{eq:systemOfEquationsForConjecture} in closed form depending on
the parameter~$d$. Moreover,
even for the case $d=2$ it is not
obvious that the system of equations~\eqref{eq:systemForkHnHMinus2}
even has a solution.
Still, we want to use 
Proposition~\ref{proposition:compute-certificate-kGeqnG_kHeqnHminusd}
for obtaining more certificates,
so let us consider the cases $d=1$ and $d=2$ once 
more, but this time with the help of SageMath~\cite{sagemath}.

Using the algorithm provided in the proof of
Proposition~\ref{proposition:compute-certificate-kGeqnG_kHeqnHminusd}
allows to reprove
Theorems~\ref{thm:sosCertificaten_kGeqnG_kHeqnHminus1_easy}
and~\ref{thm:sosCertificaten_kGeqnG_kHeqnHminus2_easy}
computationally with SageMath. It turns out that the variety of the system of 
equations~\eqref{eq:systemOfEquationsForConjecture}, whose points are the
solutions~$(\alpha_0, \dots, \alpha_d)$ 
of~\eqref{eq:systemOfEquationsForConjecture}, is of dimension~$1$ which
means that the dependency on~$n_\cH$ is the only dependency on a free
parameter. For $d=1$, the solution is essentially unique (except for the obvious
replacement of $s_g$ by $-s_g$). For $d=2$, in the solution presented in 
Theorem~\ref{thm:sosCertificaten_kGeqnG_kHeqnHminus2_easy} 
we can additionally replace each 
occurrence of $\sqrt{2}$ in any of $(\alpha,\beta,\gamma)$ by 
$-\sqrt{2}$ and obtain another solution.
In other words we can choose
the signs of $\pm\sqrt{1}$ and $\pm\sqrt{2}$.
This observation will be revisited in Remark~\ref{remark:question-alpha-d}.

In the same manner and with a lot of patience, we can let the
algorithm run for $d=3$ and get the result presented as 
Theorem~\ref{thm:sosCertificaten_kGeqnG_kHeqnHminus3} below. 
However, the computation can
be speeded up in the following way. This will allow to also
also cover the case $d=4$.

\begin{remark}\label{remark:compute-certificate-kGeqnG_kHeqnHminusd:general-nH}
  Suppose that the coefficients~$\alpha_0$, \dots, $\alpha_d$ are
  polynomials in~$n_\cH$ with degrees bounded by~$d$. (This is the case
  for Theorems~\ref{thm:sosCertificaten_kGeqnG_kHeqnHminus1_easy}
  and~\ref{thm:sosCertificaten_kGeqnG_kHeqnHminus2_easy}, so this
  assumption is reasonable.)  Then, by fixing a particular value
  for~$n_\cH$, the time of the computation of the $\alpha_0$, \dots, 
  $\alpha_d$ is now dramatically reduced. Doing this for $d+1$
  different values~$n_\cH$ allows to compute the
  coefficients~$\alpha_0$, \dots,  $\alpha_d$ as interpolation
  polynomials in~$n_\cH$.

  It should be noted that this interpolation trick is
  technically/computationally not as innocent as one might think: One
  has to carefully choose the values~$n_\cH$ in order to ``keep
  track'' of the branch of one particular solution,
  as the solution of~\eqref{eq:systemOfEquationsForConjecture} is not unique.
\end{remark}

By using the strategy explained in the previous remark, we are able to
show the following.

\begin{theorem}\label{thm:sosCertificaten_kGeqnG_kHeqnHminus3}
  For $k_\cG = n_\cG \geq 1$ and $k_\cH = n_\cH-3 \geq 1$,
  Vizing's conjecture is true as the polynomials
  \begin{equation*}
    s_{g} = \sum_{i=0}^3 \alpha_i \SigSubset{i} 
    \quad\text{for $g \in V(\cG)$},
  \end{equation*}
  where
  \begin{align*}
    \alpha_{0} &=
                 - \tfrac{1}{6} \, n_\cH^{3} {\left(\sqrt{3} + 3 \, \sqrt{2} + 3\right)}
                 + \tfrac{1}{2} \, n_\cH^{2} {\left(2 \, \sqrt{3} + 5 \, \sqrt{2} + 4\right)}\\
               &\phantom{=}\;
                 - \tfrac{1}{2} \, n_\cH {\left(\tfrac{11}{3} \sqrt{3} + 6\sqrt{2} + 3\right)}
                 + \sqrt{3}, \\
    \alpha_{1} &=
                 + \tfrac{1}{2} \, n_\cH^{2} {\left(\sqrt{3} + 3\sqrt{2} + 3\right)}
                 - \tfrac{1}{2} \, n_\cH {\left(5\sqrt{3} + 13\sqrt{2} + 11\right)}
                 + 3 \, \left(\sqrt{3} + 2\sqrt{2}\right)
                 + 4, \\
    \alpha_{2} &= - n_\cH {\left(\sqrt{3} + 3 \, \sqrt{2} + 3\right)}
                 + 3 \, \sqrt{3} + 8 \, \sqrt{2} + 7, \\
    \alpha_{3} &= \sqrt{3} + 3 \, \sqrt{2} + 3,
  \end{align*}
  are a $3$-sos certificate of $\fviz$.
\end{theorem}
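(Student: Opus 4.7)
The plan is to follow exactly the algorithm described in the proof of Proposition~\ref{proposition:compute-certificate-kGeqnG_kHeqnHminusd}, specialized to $d = 3$. That is, starting from $s_g = \sum_{i=0}^3 \alpha_i \SigSubset{i}$ with the $\alpha_i$ stated in the theorem, we compute $s_g^2$ by binomial expansion, which produces a $\K$-linear combination of the products $\SigSubset{i}\,\SigSubset{j}$ for $0 \le i \le j \le 3$. We then rewrite this combination modulo~$\Iviz$ as $\sum_{i=0}^{3} \phi_i(\alpha)\,\SigSubset{i}$ and finally sum over $g \in V(\cG)$. The certificate condition then reduces to the four scalar equations
\begin{equation*}
  \phi_0(\alpha) = -(n_\cH - 3), \qquad \phi_1(\alpha) = 1, \qquad \phi_2(\alpha) = 0, \qquad \phi_3(\alpha) = 0,
\end{equation*}
and it suffices to verify that the explicit $\alpha_0, \dots, \alpha_3$ given in the statement satisfy these equations identically in~$n_\cH$.

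The reduction of $s_g^2$ into the form $\sum_{i=0}^{3} \phi_i(\alpha) \SigSubset{i}$ is carried out with the sigma calculus of Section~\ref{sec:sigma-calculus}. First, Lemma~\ref{lem:ProdOfSumsAsSumOfSums} converts each $\SigSubset{i}\,\SigSubset{j}$ into a linear combination of the $\SigSubset{m}$ with $i \le m \le i+j \le 6$. Second, Lemma~\ref{lem:reduceSigTooLarge}, applied with $d = 3$ and $j \in \{0,1,2,3\}$, expresses each $\SigSubset{m}$ with $m \in \{4,5,6\}$ as a linear combination of $\SigSubset{0}, \SigSubset{1}, \SigSubset{2}, \SigSubset{3}$ with coefficients that are rational functions in~$n_\cH$. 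Composing these two reductions yields each $\phi_i(\alpha)$ as an explicit polynomial in $\alpha_0, \dots, \alpha_3$ with coefficients polynomial in~$n_\cH$ (as in Remarks~\ref{remark:rules:SigSubset:d1} and~\ref{remark:rules:SigSubset:d2}, but now one step further).

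The main obstacle is purely computational: the system of four equations $\phi_0 = -(n_\cH - 3)$, $\phi_1 = 1$, $\phi_2 = \phi_3 = 0$ consists of polynomial identities in $n_\cH$ whose coefficients involve $\sqrt{2}$ and $\sqrt{3}$ through the~$\alpha_i$. Substituting the stated $\alpha_i$ and simplifying by hand is unwieldy, so the verification is best delegated to SageMath~\cite{sagemath} along the lines already used for $d=1, 2$: implement the sigma calculus (Lemmata~\ref{lem:ProdOfSumsAsSumOfSums} and~\ref{lem:reduceSigTooLarge}), symbolically compute $\sum_{g \in V(\cG)} s_g^2 - \fviz$ modulo~$\Iviz$, and check that each coefficient of $\sum_{g \in V(\cG)} \SigSubset{i}$ vanishes as a polynomial in $n_\cH$ over $\Q(\sqrt{2}, \sqrt{3})$.

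To arrive at the explicit $\alpha_i$ in the first place (rather than merely verifying them), the interpolation strategy of Remark~\ref{remark:compute-certificate-kGeqnG_kHeqnHminusd:general-nH} is decisive: one fixes four distinct numerical values of~$n_\cH$, solves the resulting zero-dimensional polynomial system for $(\alpha_0, \dots, \alpha_3)$ at each value while consistently tracking one branch of the solution, and then recovers the displayed cubic-in-$n_\cH$ expressions by Lagrange interpolation. Thus the hard part is not the structure of the proof, which is a direct continuation of the pattern used for $d \in \{0,1,2\}$, but the care needed to keep the symbolic algebra tractable and to select a coherent branch of solutions so that the interpolated $\alpha_i$ indeed satisfy the identity for all admissible $n_\cH$.
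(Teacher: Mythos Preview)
Your proposal is correct and follows essentially the same approach as the paper: apply the algorithm of Proposition~\ref{proposition:compute-certificate-kGeqnG_kHeqnHminusd} together with the interpolation speed-up of Remark~\ref{remark:compute-certificate-kGeqnG_kHeqnHminusd:general-nH}, carrying out the sigma-calculus reductions and the final verification in SageMath. The paper's proof is in fact even terser than yours, simply citing those two results and delegating the construction and solution of the system~\eqref{eq:systemOfEquationsForConjecture} to the computer.
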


\begin{proof}
  We apply the algorithm provided by
  Proposition~\ref{proposition:compute-certificate-kGeqnG_kHeqnHminusd}
  and
  Remark~\ref{remark:compute-certificate-kGeqnG_kHeqnHminusd:general-nH} and
  the claimed result follows.
  In particular we use SageMath~\cite{sagemath} in order to 
  construct the system of equations~\eqref{eq:systemOfEquationsForConjecture}
  and to obtain a solution of it.
\end{proof}

\begin{theorem}\label{thm:sosCertificaten_kGeqnG_kHeqnHminus4}
  For $k_\cG = n_\cG \geq 1$ and $k_\cH = n_\cH-4 \geq 1$,
  Vizing's conjecture is true as the polynomials
  \begin{equation*}
    s_{g} = \sum_{i=0}^4 \alpha_i \SigSubset{i} 
    \quad\text{for $g \in V(\cG)$},
  \end{equation*}
  where
  \begin{align*}
    \alpha_{0} &=
                 \tfrac{1}{12} \, n_\cH^{4} {\left(2 \, \sqrt{3} + 3 \, \sqrt{2} + 1\right)}
                 - \tfrac{1}{6} \, n_\cH^{3} {\left(9 \, \sqrt{3} + 12 \, \sqrt{2} + 2\right)} \\
               &\phantom{=}\;
                 + \tfrac{1}{12} \, n_\cH^{2} {\left(52 \, \sqrt{3} + 57 \, \sqrt{2} - 7\right)}
                 - \tfrac{1}{6} \, n_\cH {\left(24 \, \sqrt{3} + 18 \, \sqrt{2} - 17\right)} - 2, \\
    \alpha_{1} &=
                 -\tfrac{1}{3} \, n_\cH^{3} {\left(2 \, \sqrt{3} + 3 \, \sqrt{2} + 1\right)}
                 + \tfrac{1}{2} \, n_\cH^{2} {\left(11 \, \sqrt{3} + 15 \, \sqrt{2} + 3\right)} \\
               &\phantom{=}\;
                 - \tfrac{1}{6} \, n_\cH {\left(83 \, \sqrt{3} + 99 \, \sqrt{2} + 1\right)}
                 + 10 \, \sqrt{3} + 10 \, \sqrt{2} - 3, \\
    \alpha_{2} &=
                 n_\cH^{2} {\left(2 \, \sqrt{3} + 3 \, \sqrt{2} + 1\right)}
                 - n_\cH {\left(13 \, \sqrt{3} + 18 \, \sqrt{2} + 4\right)}
                 + 5 \left(4\sqrt{3} + 5\sqrt{2}\right)  + 2, \\
    \alpha_{3} &=
                 -2 \, n_\cH {\left(2 \, \sqrt{3} + 3 \, \sqrt{2} + 1\right)}
                 + 15 \, \sqrt{3} + 21 \, \sqrt{2} + 5, \\
    \alpha_{4} &=
                 4 \, \sqrt{3} + 6 \, \sqrt{2} + 2,
  \end{align*}
  are a $4$-sos certificate of $\fviz$.
\end{theorem}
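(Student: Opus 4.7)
The plan is to invoke the algorithm from Proposition~\ref{proposition:compute-certificate-kGeqnG_kHeqnHminusd} together with the interpolation speed-up from Remark~\ref{remark:compute-certificate-kGeqnG_kHeqnHminusd:general-nH}, exactly as was done for $d=3$ in Theorem~\ref{thm:sosCertificaten_kGeqnG_kHeqnHminus3}. The assumption driving the interpolation is that each coefficient $\alpha_i$ ($0 \le i \le 4$) is a polynomial in $n_\cH$ of degree at most $4$, which is consistent with the pattern observed in Theorems~\ref{thm:sosCertificaten_kGeqnG_kHeqnH_easy}, \ref{thm:sosCertificaten_kGeqnG_kHeqnHminus1_easy}, \ref{thm:sosCertificaten_kGeqnG_kHeqnHminus2_easy} and~\ref{thm:sosCertificaten_kGeqnG_kHeqnHminus3} and is also the shape of the claimed $\alpha_i$ in the statement.

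First I would expand $s_g^2 = \bigl(\sum_{i=0}^4 \alpha_i \SigSubset{i}\bigr)^2$ and use the sigma calculus from Section~\ref{sec:sigma-calculus} (in particular Lemma~\ref{lem:ProdOfSumsAsSumOfSums} to eliminate products $\SigSubset{i}\SigSubset{j}$, followed by repeated applications of Lemma~\ref{lem:reduceSigTooLarge} with $d=4$ to reduce every $\SigSubset{m}$ with $m > 4$ down to a $\K$-linear combination of $\SigSubset{0}, \dots, \SigSubset{4}$). This produces an identity
\begin{equation*}
  \sum_{g \in V(\cG)} s_g^2 \equiv \sum_{i=0}^{4} \phi_i(\alpha)\, \sum_{g \in V(\cG)}\SigSubset{i} \mod \Iviz,
\end{equation*}
where each $\phi_i(\alpha)$ is an explicit polynomial in $\alpha = (\alpha_0,\dots,\alpha_4)$ whose coefficients are rational functions of $n_\cH$. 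Comparing with $\fviz = -(n_\cH-4)\sum_{g}\SigSubset{0} + \sum_{g}\SigSubset{1}$ yields the system~\eqref{eq:systemOfEquationsForConjecture} specialised to $d=4$: $\phi_0 = -(n_\cH-4)$, $\phi_1 = 1$, $\phi_2 = \phi_3 = \phi_4 = 0$.

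Rather than attempting to solve this system symbolically in $n_\cH$ (which is the computational bottleneck), I would fix five integer values $n_\cH \in \{5,6,7,8,9\}$ (each satisfying $n_\cH - 4 \ge 1$), solve the resulting numerical systems with SageMath over the appropriate algebraic extension of $\Q$, and then Lagrange-interpolate the five polynomials $\alpha_i(n_\cH)$ of degree at most $4$ from the sampled data. The resulting polynomials should match the $\alpha_i$ stated in the theorem. Finally, to actually certify the result, I would substitute these $\alpha_i$ back into the reduced expression for $\sum_{g} s_g^2 \bmod \Iviz$ and verify symbolically (in the polynomial ring $\Q[\sqrt{2},\sqrt{3}][n_\cH]$) that the identity $\sum_{g} s_g^2 \equiv \fviz \bmod \Iviz$ holds; this step does not require numerical approximation.

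The main obstacle is \emph{branch tracking} at the interpolation nodes. As already noted in the discussion after Theorem~\ref{thm:sosCertificaten_kGeqnG_kHeqnHminus2_easy}, the variety of~\eqref{eq:systemOfEquationsForConjecture} has several irreducible components corresponding to different sign choices of the radicals appearing in the $\alpha_i$ (here $\sqrt{2}$ and $\sqrt{3}$, matching the shape of the $d=3$ case). If at different values of $n_\cH$ one inadvertently picks solutions from different components, the Lagrange interpolant will not satisfy the system globally and the final symbolic verification will fail. To avoid this, at each node one extracts \emph{all} real solutions, identifies them by their signature with respect to $\sqrt{2}$ and $\sqrt{3}$, and consistently selects the same signature across all five nodes before interpolating. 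Once a coherent branch is chosen, the final verification is purely mechanical: Gröbner-basis reduction of $\sum_g s_g^2 - \fviz$ modulo $\Iviz$ in SageMath returns zero, establishing that the stated $\alpha_i$ indeed give a $4$-sos certificate of $\fviz$.
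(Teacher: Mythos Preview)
Your proposal is correct and takes essentially the same approach as the paper: the paper's proof simply invokes Proposition~\ref{proposition:compute-certificate-kGeqnG_kHeqnHminusd} together with the interpolation trick of Remark~\ref{remark:compute-certificate-kGeqnG_kHeqnHminusd:general-nH}, carried out in SageMath, and your write-up is a faithful (and more explicit) elaboration of exactly that procedure, including the branch-tracking caveat already flagged in the remark. The only minor imprecision is your closing sentence about ``Gr\"obner-basis reduction modulo $\Iviz$''---the symbolic verification you describe just before that (checking the system~\eqref{eq:systemOfEquationsForConjecture} as polynomial identities in $n_\cH$ over $\Q[\sqrt{2},\sqrt{3}]$) is the right step, whereas a Gr\"obner-basis reduction over $\Iviz$ would only confirm the certificate for fixed numerical parameters.
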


\begin{proof}
  We again use SageMath~\cite{sagemath} and apply the algorithm provided by
  Proposition~\ref{proposition:compute-certificate-kGeqnG_kHeqnHminusd}
  and
  Remark~\ref{remark:compute-certificate-kGeqnG_kHeqnHminusd:general-nH}
  to obtain the claimed certificate.
\end{proof}

It should be noted once more that once having algorithmically proven
Theorems~\ref{thm:sosCertificaten_kGeqnG_kHeqnHminus3}
and~\ref{thm:sosCertificaten_kGeqnG_kHeqnHminus4}, verifying that
those results indeed form a certificate---again this can be done
computationally---is much easier.

\begin{remark}\label{remark:question-alpha-d}
  Let us consider the set-up and certificate as presented in
  Conjecture~\ref{con:sosCertificaten_kGeqnG_kHeqnHminusi_easy}
  again. In particular, let us have a look at the
  coefficient~$\alpha_d$ for various~$d$. By using the certificates
  obtained in this Section~\ref{sec:proof_of_kG=nG_and_kH=nH-1}, we
  may rewrite this coefficient as
  \begin{align*}
  d&=1\colon & \alpha_{1} &= \sqrt{1}, \\
  d&=2\colon & \alpha_{2} &= \sqrt{2} + (1+1) \, \sqrt{1}, \\
  d&=3\colon & \alpha_{3} &= \sqrt{3} + (1+2) \, \sqrt{2} + (1+1+1) \, \sqrt{1}, \\
  d&=4\colon & \alpha_{4} &= \sqrt{4} + (1+3) \, \sqrt{3} + (1+2+3) \, \sqrt{2}
               + (1+1+1+1) \, \sqrt{1}.
  \intertext{We therefore ask the following question: Is it true that}
  d&=5\colon & \alpha_{5} &= \sqrt{5} + (1+4) \, \sqrt{4} + (1+3+5) \, \sqrt{3} \\
               &&&\phantom{=}\;
               + (1+2+3+4) \, \sqrt{2} + (1+1+1+1+1) \, \sqrt{1}, \\
  d&=6\colon & \alpha_{6} &= \sqrt{6} + (1+5) \, \sqrt{5} + (1+4+7) \, \sqrt{4}
               + (1+3+5+7) \, \sqrt{3} \\
               &&&\phantom{=}\;
               + (1+2+3+4+5) \, \sqrt{2}
               + (1+1+1+1+1) \, \sqrt{1}
\end{align*}
and more generally for given $d$ that
\begin{equation}\label{eq:alphad}
  \alpha_d = \sum_{i=0}^{d-1} \Bigl(\;\sum_{j=0}^{d-i-1} (1+ij)\Bigr) \sqrt{i+1}
\end{equation}
is a choice for~$\alpha_d$ in a certificate for
Conjecture~\ref{con:sosCertificaten_kGeqnG_kHeqnHminusi_easy}? If so,
are \emph{all} possible certificates given by choosing a sign for each
square root $\pm\sqrt{i+1}$ in~\eqref{eq:alphad} (including the signs of expressions like
$\sqrt{1}=1$ and $\sqrt{4}=2$, i.e., $2^d$ different solutions)?
The latter turned out to be true for $d \in \set{0,1,2,3,4}$ by our computations.
\end{remark}

To summarize, in this section we have obtained certificates for the cases
with $k_\cG = n_\cG \ge 1$ and
$k_\cH=n_\cH - d$ for $d \in \set{0,1,2,3,4}$ by our method. For $d \in \set{0,1,2}$
we have proven these results by hand, for $d\in \set{3,4}$ we have proven them 
computationally.
We will continue to prove the correctness of certain certificates in the next 
section.

\section{Exact Certificates for \texorpdfstring{$k_\cG = n_\cG-1$}{kG=nG-1} and \texorpdfstring{$k_\cH=n_\cH - 1$}{kH=nH-1} with \texorpdfstring{$n_\cH\in \set{2,3}$}{nH=2 or nH=3} }
\label{sec:certificates_kGIsnGMinus1_kHEqnHMinus1}

In this section we will finally prove
Theorem~\ref{thm:sosCertificate_kGEqnGMinus1_kH2nH3} and therefore
obtain a certificate for the case $k_\cG = n_\cG-1 \ge 1$, $n_\cH=3$
and $k_\cH=2$. As a byproduct we will also obtain a certificate for the
case $k_\cG = n_\cG-1 \ge 1$, $n_\cH=2$ and $k_\cH=1$. Towards that
end we will use some of the results of
Section~\ref{sec:sigma-calculus} and derive further
results of a similar nature.

\subsection{Auxiliary Results}
\label{sec:auxiliaryResults_kGIsnGMinus1_nH3_kH2}
We start with the following lemma.
\begin{lemma}\label{lem:eggPrimeZero_kGeqnGMinus1}
    Let $k_\cG = n_\cG - 1 \ge 1$. Then $e_{gg'} \in I_\cG \subseteq \Iviz$
    holds for all $\set{g,g'} \subseteq D_\cG$.
\end{lemma}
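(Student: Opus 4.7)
The plan is to mirror the proof of Lemma~\ref{lem:eggPrimeZero_kGeqnG}: apply Hilbert's Nullstellensatz (Theorem~\ref{thm:HilbertsNullstellensatz} together with Remark~\ref{remark:HilbertsNullstellensatz}, which applies because $I_\cG$ is radical by Lemma~\ref{lem_kreuzer_rad}) to the polynomial $f = e_{gg'}$, so that it suffices to check $e_{gg'}^\ast = 0$ for every $z^\ast \in \variety{I_\cG}$. The case $n_\cG = 2$ (forcing $k_\cG = 1$ and $\card{D_\cG}=1$) contains no pair $\set{g,g'} \subseteq D_\cG$ and is therefore vacuous, so the interesting situation is $n_\cG \geq 3$. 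The graph-theoretic intuition to be encoded is that if $D_\cG$ is a minimum dominating set of size $n_\cG - 1$ and the unique outside vertex $v$ is adjacent to some $g' \in D_\cG$, then no edge inside $D_\cG$ can touch $g'$, as otherwise $D_\cG \setminus \set{g'}$ would already dominate.

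The plan for $n_\cG \geq 3$ is as follows. Fix $\set{g,g'} \subseteq D_\cG$ with $g \neq g'$, let $v$ be the unique vertex of $V(\cG) \setminus D_\cG$, and suppose for contradiction that $e_{gg'}^\ast = 1$. I would then invoke equation~\eqref{eq:kcover-fix} for the two subsets
\begin{equation*}
  S_1 = D_\cG \setminus \set{g'} \quad \text{and} \quad S_2 = D_\cG \setminus \set{g},
\end{equation*}
each of cardinality $n_\cG - 2 = k_\cG - 1$. For $S_1$ the vertices outside are precisely $g'$ and $v$; the inner sum of~\eqref{eq:kcover-fix} evaluated at $g'$ is already $\geq e_{gg'}^\ast = 1$, so the product can only vanish through its factor at $v$, forcing $v$ to have no edge to any vertex of $S_1$. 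An entirely symmetric analysis of $S_2$ yields that $v$ has no edge to any vertex of $S_2$.

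Since $g \neq g'$, the union $S_1 \cup S_2$ equals $D_\cG$, so the two conclusions combine to say that $v$ has no neighbour in $D_\cG$. This directly contradicts equation~\eqref{eq:domset-fix} applied at $v \in V(\cG) \setminus D_\cG$, which requires at least one $w \in D_\cG$ with $e_{vw}^\ast = 1$. Hence the initial assumption $e_{gg'}^\ast = 1$ fails and the Nullstellensatz reduction completes the argument. I do not expect a real obstacle: the whole proof is purely boolean reasoning on the variety points, and the only mild bookkeeping point is making sure the case $n_\cG = 3$ (where $S_1$ and $S_2$ are singletons and~\eqref{eq:kcover-fix} degenerates accordingly) fits into the same framework, which the formulation above already accommodates.
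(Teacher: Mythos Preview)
Your proposal is correct and essentially identical to the paper's own proof: the paper also assumes $e_{gg'}^\ast = 1$, applies~\eqref{eq:kcover-fix} to the two sets $D_\cG \setminus \set{g}$ and $D_\cG \setminus \set{g'}$, deduces that the outside vertex~$\hat{g}$ has no neighbour in $D_\cG$, and derives a contradiction with~\eqref{eq:domset-fix}. The only cosmetic difference is notation (the paper writes $S_g$, $S_{g'}$ and~$\hat{g}$ where you write $S_2$, $S_1$ and~$v$).
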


This lemma is equivalent to $e_{gg'} \equiv 0 \mod I_\cG$ 
and $e_{gg'} \equiv 0 \mod \Iviz$ for
all $\set{g,g'} \subseteq D_\cG$. 

Lemma~\ref{lem:eggPrimeZero_kGeqnGMinus1} is plausible 
from a graph theoretical point of view. 
Indeed, due to Theorem~\ref{thm:BijectionVarietyGraphsG} the points in the 
variety of 
$I_\cG$ are in bijection to the graphs in $\cG$, which are the graphs on 
$n_\cG$ 
vertices with domination number $k_\cG=n_\cG-1$ and a minimum dominating set 
$D_\cG$.
Clearly in such graphs there are no edges 
between any two vertices of $D_\cG$, 
because if there would be such an edge, the domination number would decrease.
Hence, for each point in the variety of $\Iviz$, we have that the component
$e^\ast_{gg'}$ is zero for every $\set{g,g'} \subseteq D_\cG$.

\begin{proof}[Proof of Lemma~\ref{lem:eggPrimeZero_kGeqnGMinus1}]
  For $k_\cG = n_\cG - 1 = 1$ there is no $\set{g,g'} \subseteq D_\cG$,
  so there is nothing to prove.

    Let $\set{g,g'} \subseteq D_\cG$.
    We apply
    Hilbert's Nullstellensatz on
    the polynomial $f = e_{gg'}$. We have $k_\cG = n_\cG - 1$,
    therefore $\card{V(\cG)\setminus D_\cG} = 1$, and so let
    $\set{\hat{g}} = V(\cG)\setminus D_\cG$. Then clearly
    $g \neq \hat{g}$ and $g' \neq \hat{g}$.
    
    We use Notation~\ref{notation:poly-vs-evaluated-star-world}.
    Let $z^\ast \in \variety{I_\cG}$, so $z^\ast$ is a
    common zero of \eqref{eq:eij-fix}, \eqref{eq:domset-fix} and
    \eqref{eq:kcover-fix}.
    We assume that $z^\ast$ is not a zero of $f = e_{gg'}$.

    Let us set
    $S = S_g = \set{\widetilde{g} \in D_\cG \colon \widetilde{g} \neq g}$.
    Then, due to \eqref{eq:kcover-fix} we have
    \begin{equation} \label{eq:prod0_g}
    \parentheses[\bigg]{\;\sum_{ \tilde{g} \in S_g}e^\ast_{\tilde{g}g}}
    \parentheses[\bigg]{\;\sum_{ \tilde{g} \in S_g}e^\ast_{\tilde{g}\hat{g}}} = 0
    \end{equation}
    and conclude that one of the two factors has to be zero.

    Due to \eqref{eq:eij-fix}, all $e^\ast_{\tilde{g}g}$ and $e^\ast_{\tilde{g}\hat{g}}$
    appearing in~\eqref{eq:prod0_g} are in $\set{0,1}$, and moreover
    $e^\ast_{gg'} \in \set{0,1}$.
    Then $e^\ast_{gg'} = 1$ (as it is assumed to be non-zero), and,
    because $g' \in S_g$,
    the first factor of \eqref{eq:prod0_g} is non-zero.
    Therefore the second factor of~\eqref{eq:prod0_g} must be zero
    and hence $e^\ast_{\tilde{g}\hat{g}} = 0$ for all $\tilde{g} \in S_g$.

    By symmetry (switching the roles of $g$ and $g'$), we obtain
    $e^\ast_{\tilde{g}\hat{g}} = 0$ for all $\tilde{g} \in S_{g'}$ and
    therefore get
    $e^\ast_{\tilde{g}\hat{g}} = 0$ for all $\tilde{g} \in S_g \cup S_{g'} = D_\cG$.
    Thus,
    \begin{align*}
    \prod_{\tilde{g} \in D_\cG} (1 - e^\ast_{\tilde{g}\hat{g}}) = 1,
    \end{align*}
    but due to~\eqref{eq:domset-fix} this product should be zero; a contradiction.
    Hence $z^\ast$ is also a zero
    of $f=e_{gg'}$, and Hilbert's Nullstellensatz
    (Theorem~\ref{thm:HilbertsNullstellensatz} and
    Remark~\ref{remark:HilbertsNullstellensatz}) implies that
    $f = e_{gg'} \in I_\cG$.
\end{proof}

In particular we will need the following consequence of Lemma~\ref{lem:eggPrimeZero_kGeqnGMinus1}.

\begin{corollary}\label{cor:prodOfeEqual0_kGeqnGMinus1}
    Let $k_\cG = n_\cG - 1 \ge 1$. 
    Then $e_{{g_1}{g_2}}e_{g_{3}g_{4}} \in I_\cG \subseteq \Iviz$
    holds for all $\set{g_1,g_2}$, $\set{g_3,g_4} \subseteq V(\cG)$ with $\set{g_1,g_2} \neq \set{g_3,g_4}$.
\end{corollary}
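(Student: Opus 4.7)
The plan is to do case analysis on where $\set{g_1,g_2}$ and $\set{g_3,g_4}$ sit relative to $D_\cG$. Since $k_\cG = n_\cG - 1$, we can write $V(\cG) = D_\cG \cup \set{\hat g}$ for a unique vertex $\hat g \notin D_\cG$. If at least one of the two pairs, say $\set{g_1,g_2}$, is contained in $D_\cG$, then Lemma~\ref{lem:eggPrimeZero_kGeqnGMinus1} gives $e_{g_1g_2} \in I_\cG$, so the whole product lies in $I_\cG$ as well. The only remaining case is when both pairs contain $\hat g$, i.e., $\set{g_1,g_2} = \set{\hat g, g}$ and $\set{g_3,g_4} = \set{\hat g, g'}$ for some $g, g' \in D_\cG$, which must be distinct because $\set{g_1,g_2} \neq \set{g_3,g_4}$.

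For this remaining case, I plan to apply Hilbert's Nullstellensatz to $f = e_{\hat g g}\, e_{\hat g g'}$. The idea is to choose $S = (D_\cG \setminus \set{g, g'}) \cup \set{\hat g}$, which has cardinality $k_\cG - 1$ and satisfies $V(\cG) \setminus S = \set{g, g'}$. Evaluating~\eqref{eq:kcover-fix} for this~$S$ at any $z^\ast \in \variety{I_\cG}$ yields
\begin{equation*}
  \Bigl(\,\sum_{\tilde g \in S} e^\ast_{\tilde g g}\Bigr) \Bigl(\,\sum_{\tilde g \in S} e^\ast_{\tilde g g'}\Bigr) = 0.
\end{equation*}
For every $\tilde g \in D_\cG \setminus \set{g, g'} \subseteq S$, both $\set{\tilde g, g}$ and $\set{\tilde g, g'}$ lie inside $D_\cG$, so Lemma~\ref{lem:eggPrimeZero_kGeqnGMinus1} forces the corresponding $e^\ast$-values to vanish. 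The only surviving summand in each factor is the one with $\tilde g = \hat g$, and the displayed equation collapses to $e^\ast_{\hat g g}\, e^\ast_{\hat g g'} = 0$. Hence $f$ vanishes on $\variety{I_\cG}$, and since $I_\cG$ is radical by Lemma~\ref{lem_kreuzer_rad}, Hilbert's Nullstellensatz (Theorem~\ref{thm:HilbertsNullstellensatz} together with Remark~\ref{remark:HilbertsNullstellensatz}) yields $f \in I_\cG$.

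The only point that needs a little care is the degenerate case $k_\cG = 1$ (so $n_\cG = 2$): then $V(\cG)$ admits only one 2-subset, so no two distinct pairs $\set{g_1,g_2} \neq \set{g_3,g_4}$ exist and the claim is vacuous. For $k_\cG \geq 2$, the set $D_\cG \setminus \set{g, g'}$ has size $k_\cG - 2 \geq 0$, making $S$ of exactly the right cardinality $k_\cG - 1$ needed to invoke~\eqref{eq:kcover-fix}. No substantial obstacle arises here; the real content is the clever choice of~$S$ that isolates precisely the two relevant edge variables after the previous lemma kills off all the other summands.
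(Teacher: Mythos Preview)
Your proof is correct and follows essentially the same route as the paper: the same case split, the same set $S = V(\cG) \setminus \set{g,g'}$ in the remaining case, and the same Nullstellensatz conclusion. The only cosmetic difference is that you invoke Lemma~\ref{lem:eggPrimeZero_kGeqnGMinus1} to kill the non-$\hat g$ summands in each factor directly, whereas the paper argues one factor of the product vanishes and then uses that all summands lie in $\set{0,1}$ to extract $e^\ast_{\hat g g} = 0$.
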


Note that also Corollary~\ref{cor:prodOfeEqual0_kGeqnGMinus1} can be explained 
from a graph theoretic point of view. 
To be precise there can be only one edge 
in a graph on $n_\cG$ vertices with domination number $n_\cG-1$,
because an additional edge would decrease the domination number. 
Therefore, for each point in the variety of $I_\cG$, the product of components
corresponding to two different edge variables
has always to be equal to $0$ due to 
Theorem~\ref{thm:BijectionVarietyGraphsG}.

\begin{proof}[Proof of Corollary~\ref{cor:prodOfeEqual0_kGeqnGMinus1}]
  If $\set{g_1,g_2} \subseteq D_\cG$ or $\set{g_3,g_4} \subseteq D_\cG$
  the result follows from Lemma~\ref{lem:eggPrimeZero_kGeqnGMinus1}
  because then $e_{g_1 g_2} \in I_\cG$ or $e_{g_3 g_4} \in I_\cG$.
  Hence we only have to consider the case
  $\set{g_1,g_2} \not \subseteq D_\cG$ and
  $\set{g_3,g_4} \not \subseteq D_\cG$. Let
  $\set{\hat{g}} = V(\cG)\setminus D_\cG$, then without loss of
  generality this case is equivalent to $g_1 = g_3 = \hat{g}$ and
  $g_2 \neq g_4$.
    
    We use Hilbert's Nullstellensatz like in 
    Lemma~\ref{lem:eggPrimeZero_kGeqnGMinus1} to prove the statement. Let 
    $z^\ast \in \variety{I_\cG}$ be a common zero of \eqref{eq:eij-fix}, 
    \eqref{eq:domset-fix} and
    \eqref{eq:kcover-fix}. If we can prove that $z^\ast$ is also a zero of $f 
    = e_{\hat{g}{g_2}}e_{\hat{g}g_{4}}$ we are done.
    
    For $S = V(\cG)\setminus \set{g_2,g_4}$, \eqref{eq:kcover-fix} implies
    \begin{align*}
    \parentheses[\bigg]{\;\sum_{ g \in S}e^\ast_{g{g_2}}}
    \parentheses[\bigg]{\;\sum_{ g \in S}e^\ast_{g{g_4}}} = 0,
    \end{align*}
    so one of these two factors has to be zero; without loss of
    generality (due to symmetry in $g_2$ and $g_4$),
    let us assume the first factor.
    As $e^\ast_{\tilde{g}g_2} \in \set{0,1}$ for all $\tilde{g} \in V(\cG)$
    by~\eqref{eq:eij-fix}, we then have in fact $e^\ast_{\tilde{g}g_2}=0$
    for all $\tilde{g} \in V(\cG)$. In particular we have $e^\ast_{\hat{g}{g_2}}=0$
    because $\hat{g} \in S$. This is what we wanted to show.
\end{proof}

We need Corollary~\ref{cor:prodOfeEqual0_kGeqnGMinus1} in order to prove the next result.

\begin{lemma}\label{lem:prodxghj_kGeqnGMinus1_kHeqnHMinusi}
    Let $k_\cG = n_\cG - 1 \ge 1$, $n_\cH \in \set{2,3}$ and $k_\cH = n_\cH - 1$.
    Then
    \begin{equation*}
    (1-x_{gh_1})(1-x_{gh_2})(1-x_{g'h_3})(1-x_{g'h_4}) \in \Iviz
    \end{equation*}
    for $\set{g,g'} \subseteq V(\cG)$ and
    for $\set{h_1,h_2}$, $\set{h_3,h_4} \subseteq V(\cH)$.
\end{lemma}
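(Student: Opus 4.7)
The plan is to apply Hilbert's Nullstellensatz (Theorem~\ref{thm:HilbertsNullstellensatz} together with Remark~\ref{remark:HilbertsNullstellensatz}) to the polynomial $f=(1-x_{gh_1})(1-x_{gh_2})(1-x_{g'h_3})(1-x_{g'h_4})$: it suffices to show that $f(z^\ast)=0$ for every $z^\ast \in \variety{\Iviz}$. I would argue by contradiction, assuming some $z^\ast$ makes $f$ nonzero and hence
\[
x^\ast_{gh_1}=x^\ast_{gh_2}=x^\ast_{g'h_3}=x^\ast_{g'h_4}=0.
\]

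The next step is to extract the graph-theoretic structure of the underlying $G$ and $H$. By Corollary~\ref{cor:prodOfeEqual0_kGeqnGMinus1} applied to $\cG$ and its direct analog for $\cH$, neither $G$ nor $H$ has more than one edge. Since $D_\cG$ is a dominating set and $V(\cG)\setminus D_\cG=\set{\hat g}$ consists of a single vertex, that $\hat g$ must have a neighbor in $D_\cG$; hence $G$ has \emph{exactly} one edge $\set{\hat g,\bar g}$ with $\bar g\in D_\cG$. An identical argument gives $H$ exactly one edge $\set{\hat h,\bar h}$ with $\bar h\in D_\cH$.

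I would then invoke \eqref{eq:xgh_domset} at each of the four forbidden product-graph vertices. Since the respective $x^\ast$ value is zero at each of them, one of the two domination products in \eqref{eq:xgh_domset} must vanish. Uniqueness of the $G$-edge (resp.\ $H$-edge) implies that the only available \emph{horizontal} dominator for $(g,h_i)$ is the partner of $g$ along $\set{\hat g,\bar g}$---which requires $g\in\set{\hat g,\bar g}$ and forces $x^\ast_{\tilde g\,h_i}=1$ for $\tilde g\in\set{\hat g,\bar g}\setminus\set{g}$---while the only available \emph{vertical} dominator is the partner of $h_i$ along $\set{\hat h,\bar h}$---which requires $h_i\in\set{\hat h,\bar h}$ and forces $x^\ast_{g\,h'}=1$ for $h'\in\set{\hat h,\bar h}\setminus\set{h_i}$.

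The hard part will be the ensuing finite-but-delicate case analysis, stratified by $n_\cH\in\set{2,3}$ and by whether $g=g'$ or $g\ne g'$. The key leverage is $\card{V(\cH)}\le 3$: vertical dominators live on the unique $H$-edge, so a vertical dominator for $(g,h_i)$ yields $x^\ast_{g\,h'}=1$ at the single other endpoint $h'\in\set{\hat h,\bar h}\setminus\set{h_i}$, and this value tends to clash directly with one of the four prescribed zeros. The boolean relation $(1-x_{gh})^2\equiv 1-x_{gh}\mod\Iviz$ lets us merge repeated indices upfront, keeping the collection of subcases finite. In each subcase the forced pattern of $x^\ast=1$ values will contradict either one of the four prescribed $x^\ast=0$ values, the single-edge structure of $G$ or $H$, or the minimality of $D_\cH$ encoded in \eqref{eq:kcover-fix} for $\cH$. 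This yields the desired $f(z^\ast)=0$ on the whole variety and completes the proof.
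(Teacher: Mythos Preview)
Your approach is correct and closely parallels the paper's, but with a useful reorganization. The paper proceeds by first making two WLOG reductions---$g \in D_\cG$ (since at most one of $g,g'$ can lie outside $D_\cG$) and $h_4 = h_1$ (since two $2$-subsets of a set of size at most $3$ must overlap)---and then works through the domination equation~\eqref{eq:xgh_domset} directly, invoking Lemma~\ref{lem:eggPrimeZero_kGeqnGMinus1} and Corollary~\ref{cor:prodOfeEqual0_kGeqnGMinus1} (the latter also for~$\cH$) on the fly as needed. You instead begin by extracting the global graph structure once and for all: both $G$ and $H$ have \emph{exactly} one edge. This packages the same lemmas into a single clean picture and makes the ``horizontal vs.\ vertical domination'' dichotomy at each of the four product vertices completely transparent. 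Both routes terminate in the same kind of finite case analysis; yours is more conceptual, while the paper's WLOG reductions---particularly $h_4=h_1$, which you do not isolate---cut down the number of subcases one actually has to write out. One small correction: the hypothesis $\set{g,g'}\subseteq V(\cG)$ already forces $g\ne g'$, so that branch of your stratification is vacuous.
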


Note that it would again be possible to justify 
Lemma~\ref{lem:prodxghj_kGeqnGMinus1_kHeqnHMinusi} in terms of graph theory 
using 
Theorem~\ref{def:IGHsdp}.
It would need a case distinction for $n_\cH =2$ and $n_\cH = 3$ 
and several more case distinctions on whether $g$, $g' \in D_\cG$, whether
$h_1$, $h_2$, $h_3$, $h_4 \in D_\cH$ and on the cardinality of $\{h_1,h_2,h_3,h_4\}$.
We refrain from presenting the details here.

\begin{proof}[Proof of Lemma~\ref{lem:prodxghj_kGeqnGMinus1_kHeqnHMinusi}]
    First observe that without loss of generality we can assume that $g \in D_\cG$, 
    as $\card{D_\cG} = n_\cG - 1$ and therefore not both of $g$ and $g'$ 
    can be in $V(\cG)\setminus D_\cG$.
    For notational convenience let $\set{\hat{g}} =  V(\cG)\setminus D_\cG$,
    and note that $g'$ might or might not be equal to $\hat{g}$. 
    
    Next observe that without loss of generality $h_4 = h_1$, 
    because $n_\cH \in \set{2,3}$, and $h_1 \neq h_2$ and $h_3 \neq h_4$
    by assumption.
    We obtain that the
    sets $\set{h_1,h_2}$ and $\set{h_3,h_4}$ are both of cardinality~$2$
    and not disjoint. 
    
    In order to prove Lemma~\ref{lem:prodxghj_kGeqnGMinus1_kHeqnHMinusi} 
    we will use Hilbert's Nullstellensatz for 
    $f = (1-x_{gh_1})(1-x_{gh_2})(1-x_{g'h_1})(1-x_{g'h_3})$ 
    analogously as it has been done in the proofs of
    Lemma~\ref{lem:eggPrimeZero_kGeqnGMinus1} and
    Corollary~\ref{cor:prodOfeEqual0_kGeqnGMinus1}.
    Note that we use Notation~\ref{notation:poly-vs-evaluated-star-world-2}.
    Towards that end let $z^\ast \in \variety{\Iviz}$, i.e., $z^\ast$ is a 
    common zero of
    \eqref{eq:eij-fix}, \eqref{eq:domset-fix} and \eqref{eq:kcover-fix}
    for both $\cG$ and $\cH$ and of \eqref{eq:xgh} and
    \eqref{eq:xgh_domset}.
    Due to \eqref{eq:eij-fix} and \eqref{eq:xgh} all of $e^\ast_{gg'}$,
    $e^\ast_{hh'}$ and $x^\ast_{gh}$ are either $0$ or $1$.
    
    Assume that $z^\ast$ is not a zero of $f$, 
    then $x^\ast_{gh_1} = x^\ast_{gh_2} = x^\ast_{g'h_1} = x^\ast_{g'h_3} = 0$. 
    Furthermore, as $g \in D_\cG$ 
    we have $e^\ast_{g\tilde{g}} = 0$ for all $\tilde{g} \in D_\cG$
    by Lemma~\ref{lem:eggPrimeZero_kGeqnGMinus1}.
    
    Now we distinguish the two cases $n_\cH = 2$ and $n_\cH = 3$.
    For $n_\cH = 2$ the above condition~$e^\ast_{g\tilde{g}} = 0$ for all $\tilde{g} \in D_\cG$ together with 
    \eqref{eq:xgh_domset} for the vertices~$gh_1$ and $gh_2$ of the box graph class imply
    \begin{align*}
    1-e^\ast_{g\hat{g}}x^\ast_{\hat{g}h_1} = 0 
    \quad \text{ and } \quad 
    1-e^\ast_{g\hat{g}}x^\ast_{\hat{g}h_2} = 0, 
    \end{align*}
    so $e^\ast_{g\hat{g}} = x^\ast_{\hat{g}h_1} = x^\ast_{\hat{g}h_2} = 1$ holds. 
    Note that for $n_\cH = 2$ we have $\set{h_1,h_2} = \set{h_3,h_4}$, 
    so $\hat{g} \neq g'$ and hence $g' \in D_\cG$ because $x^\ast_{g'h_1} = 0$.
    Then~\eqref{eq:xgh_domset} 
    for $g'h_1$ and $g'h_3$ yields 
    \begin{align*}
    1-e^\ast_{g'\hat{g}}x^\ast_{\hat{g}h_1} = 0 
    \quad \text{ and } \quad
    1-e^\ast_{g'\hat{g}}x^\ast_{\hat{g}h_3} = 0
    \end{align*}
    and hence $e^\ast_{g'\hat{g}} = x^\ast_{\hat{g}h_1} = x^\ast_{\hat{g}h_3} = 1$.
    But due to Corollary~\ref{cor:prodOfeEqual0_kGeqnGMinus1} and $e^\ast_{g\hat{g}} = 1$ 
    we have $e^\ast_{g'\hat{g}} = 0$, a contradiction.
    Hence the lemma holds for $n_\cH = 2$. 
    
    Next we consider the case $n_\cH = 3$. Here we let $\set{h} = V(\cH) \setminus \set{h_1,h_2}$ 
    and let $\set{h'} = V(\cH) \setminus \set{h_1,h_3}$.
    Together with \eqref{eq:xgh_domset} for the box graph class vertices~$gh_1$ and $gh_2$, 
    the above derived fact $e^\ast_{g\tilde{g}} = 0$ for all $\tilde{g} \in D_\cG$
    yields
    \begin{subequations}
    \begin{align}
    (1-e^\ast_{g\hat{g}}x^\ast_{\hat{g}h_1})(1-e^\ast_{h_1 h}x^\ast_{gh}) 
    &= 0 \quad \text{ and } \label{eq:gh1}\\
    (1-e^\ast_{g\hat{g}}x^\ast_{\hat{g}h_2})(1-e^\ast_{h_2 h}x^\ast_{gh}) 
    &= 0. \label{eq:gh2}
    \end{align}
    \end{subequations}
    If $e^\ast_{g\hat{g}} = 0$, 
    then \eqref{eq:gh1} and \eqref{eq:gh2} imply $x^\ast_{gh} = 1$ 
    and $e^\ast_{h_1 h} = e^\ast_{h_2 h} = 1$, 
    which is a contradiction to Corollary~\ref{cor:prodOfeEqual0_kGeqnGMinus1}. 
    So $e^\ast_{g\hat{g}} = 1$ holds. 
    Now we will distinguish the two cases $g' \neq \hat{g}$ and $g' = \hat{g}$.
    
    \emph{Case $g' \neq \hat{g}$.}
    We have $g' \in D_\cG$ and can deduce from~\eqref{eq:xgh_domset} 
    for $g'h_1$ and $g'h_3$ analogously as for $g$ that 
    \begin{subequations}
    \begin{align}
    (1-e^\ast_{g'\hat{g}}x^\ast_{\hat{g}h_1})(1-e^\ast_{h_1 h'}x^\ast_{g'h'}) 
    &= 0 \quad \text{ and } \label{eq:gPrimeh1} \\
    (1-e^\ast_{g'\hat{g}}x^\ast_{\hat{g}h_3})(1-e^\ast_{h_3 h'}x^\ast_{g'h'}) 
    &= 0. \label{eq:gPrimeh3}
    \end{align}
    \end{subequations}
    Due to Corollary~\ref{cor:prodOfeEqual0_kGeqnGMinus1} and $e^\ast_{g\hat{g}} = 1$ 
    we have $e^\ast_{g'\hat{g}} = 0$. 
    Therefore~\eqref{eq:gPrimeh1} and~\eqref{eq:gPrimeh3} imply 
    that $e^\ast_{h_1 h'} = e^\ast_{h_3 h'} = 1$, 
    which is a contradiction to Corollary~\ref{cor:prodOfeEqual0_kGeqnGMinus1}. 
    So in this case $z^\ast$ is also a zero of $f$ and hence $f \in \Iviz$
    holds because of Hilbert's Nullstellensatz.
    
    \emph{Case $g' = \hat{g}$.}
    Due to 
    Corollary~\ref{cor:prodOfeEqual0_kGeqnGMinus1} and $e^\ast_{g\hat{g}} = 1$ 
    we can deduce that
     $e^\ast_{g'\tilde{g}} = 0$ for all $\tilde{g} \in V(\cG)\setminus \set{g}$. 
     Therefore \eqref{eq:xgh_domset} for the vertices $g'h_1$ and $g'h_3$
     of the box graph class become
    \begin{subequations}
    \begin{align}
    (1-e^\ast_{g\hat{g}}x^\ast_{gh_1})(1-e^\ast_{h_1 h'}x^\ast_{\hat{g}h'}) 
    &= 0 \quad \text{ and } \label{eq:gPrimeh1_Case2} \\
    (1-e^\ast_{g\hat{g}}x^\ast_{gh_3})(1-e^\ast_{h_3 h'}x^\ast_{\hat{g}h'}) 
    &= 0. \label{eq:gPrimeh3_Case2}
    \end{align}
    \end{subequations}
    We have $x^\ast_{\hat{g}h_1} = x^\ast_{gh_1} = 0$, 
    so from \eqref{eq:gh1} and \eqref{eq:gPrimeh1_Case2} 
    it follows that $x^\ast_{gh} = x^\ast_{\hat{g}h'} = 1$
    and $e^\ast_{h_1 h} = e^\ast_{h_1 h'} = 1$.
    Corollary~\ref{cor:prodOfeEqual0_kGeqnGMinus1} applied on the graph class~$\cH$ implies $h = h'$ 
    and therefore also $h_2 = h_3$ holds. 
    Furthermore, this corollary also implies that $e^\ast_{h_2 h} = 0$, 
    and hence $x^\ast_{\hat{g}h_2} = 1$ because of \eqref{eq:gh2}. 
    But this is a contradiction because $x^\ast_{\hat{g}h_2} = x^\ast_{g' h_3} = 0$. 
    So also in this case $z^\ast$ is a zero of $f$ and therefore $f \in \Iviz$ holds.
\end{proof}

\begin{remark}\label{rem:productOf4For_kGisnGMinus1_nH3_kH2}
    In particular for $k_\cG = n_\cG - 1$, $n_\cH \in \set{2,3}$ and $k_\cH = n_\cH - 1$, 
    Lemma~\ref{lem:prodxghj_kGeqnGMinus1_kHeqnHMinusi} implies
    \begin{align*}   
    x_{gh_1}x_{gh_2}x_{g'h_3}x_{g'h_4} &\equiv  
     x_{gh_1}x_{gh_2}x_{g'h_3}
    +x_{gh_1}x_{gh_2}x_{g'h_4}
    +x_{gh_1}x_{g'h_3}x_{g'h_4}
    +x_{gh_2}x_{g'h_3}x_{g'h_4}\\
    &\qquad -x_{gh_1}x_{gh_2}
    -x_{gh_1}x_{g'h_3}
    -x_{gh_2}x_{g'h_3}\\
    &\qquad -x_{gh_1}x_{g'h_4}
    -x_{gh_2}x_{g'h_4}
    -x_{g'h_3}x_{g'h_4}\\
    &\qquad +x_{gh_1}
    +x_{gh_2}
    +x_{g'h_3}
    +x_{g'h_4}
    -1  \mod  \Iviz
    \end{align*}
    for all  $\set{g,g'} \subseteq V(\cG)$ and
    all $\set{h_1,h_2}$, $\set{h_3,h_4} \subseteq V(\cH)$.
\end{remark}

Next we will need some more polynomials in order to be able to cope with $\SigSubset{i}$ in a better way.

\begin{definition}\label{def:tau_i_j}
    Let $i$ and $j$ be two non-negative integers. We define
    \begin{align*}
    \tauij{i}{j} = 
    \sum_{g \in V(\cG)}\; \sum_{\substack{g' \in V(\cG) \\ g' \neq g}}\;
    \SigSubset{i} \SigPrimeSubset{j}.
    \end{align*}
\end{definition}
Observe that $\tauij{i}{j} = \tauij{j}{i}$ holds. 
As a next step we will use Lemma~\ref{lem:prodxghj_kGeqnGMinus1_kHeqnHMinusi} 
in order to determine $\tauij{2}{2}$.

\begin{lemma} \label{lem:compute_tau22}
Let $k_\cG = n_\cG - 1\ge 1$, $n_\cH \in \set{2,3}$ and $k_\cH = n_\cH - 1$.
Then    
    \begin{align*}
    \tauij{2}{2}
    &\equiv 
    2(n_\cH - 1)\tauij{2}{1} 
    - (n_\cH - 1)^2\tauij{1}{1}
    - n_\cH(n_\cH - 1)(n_\cG - 1)\sum_{g \in V(\cG)}\SigSubset{2}\\
    &\quad + n_\cH(n_\cH - 1)^2(n_\cG - 1)\sum_{g \in V(\cG)}\SigSubset{1}
    - \frac{1}{4}n_\cG(n_\cG-1)n_\cH^2(n_\cH-1)^2  \mod \Iviz.
    \end{align*}
\end{lemma}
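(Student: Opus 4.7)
My plan is to prove Lemma~\ref{lem:compute_tau22} by a direct expansion of $\tauij{2}{2}$, using Remark~\ref{rem:productOf4For_kGisnGMinus1_nH3_kH2} (which follows from Lemma~\ref{lem:prodxghj_kGeqnGMinus1_kHeqnHMinusi} and is the only place the restriction $n_\cH \in \set{2,3}$ enters) to rewrite each quartic monomial $x_{gh_1}x_{gh_2}x_{g'h_3}x_{g'h_4}$ modulo~$\Iviz$ as an explicit polynomial of degree at most three. The proof is then a careful bookkeeping of how the resulting cubic, quadratic, linear and constant pieces aggregate after summing over pairs in $V(\cH)$ and over $g\neq g'$ in $V(\cG)$.

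Concretely, I would first expand
\begin{equation*}
  \tauij{2}{2} = \sum_{g \in V(\cG)}\, \sum_{\substack{g' \in V(\cG) \\ g' \neq g}}\;
  \sum_{\set{h_1,h_2} \subseteq V(\cH)}\; \sum_{\set{h_3,h_4} \subseteq V(\cH)}
  x_{gh_1}x_{gh_2}x_{g'h_3}x_{g'h_4},
\end{equation*}
and apply Remark~\ref{rem:productOf4For_kGisnGMinus1_nH3_kH2} to each innermost summand. The four cubic terms in that reduction pair off (by symmetry in $h_1,h_2$ and in $h_3,h_4$) into $x_{gh_1}x_{gh_2}(x_{g'h_3}+x_{g'h_4})$ and $(x_{gh_1}+x_{gh_2})x_{g'h_3}x_{g'h_4}$; the free pair-sum contributes a factor $(n_\cH-1)$, yielding $(n_\cH-1)(\SigSubset{2}\SigPrimeSubset{1} + \SigSubset{1}\SigPrimeSubset{2})$, which after summing over $g\neq g'$ becomes $2(n_\cH-1)\tauij{2}{1}$ using $\tauij{1}{2}=\tauij{2}{1}$.

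Next, the six quadratic terms split into two ``same-graph'' pieces $x_{gh_1}x_{gh_2}$ and $x_{g'h_3}x_{g'h_4}$, each picking up a factor $\binom{n_\cH}{2}$ from the free pair-sum and together contributing $\tfrac{1}{2}n_\cH(n_\cH-1)(\SigSubset{2}+\SigPrimeSubset{2})$, and the four ``cross'' pieces, which sum to $(x_{gh_1}+x_{gh_2})(x_{g'h_3}+x_{g'h_4})$ and yield $(n_\cH-1)^2\SigSubset{1}\SigPrimeSubset{1}$. The four linear terms similarly give $\binom{n_\cH}{2}(n_\cH-1)(\SigSubset{1}+\SigPrimeSubset{1})$, and the constant $-1$ accumulates $\binom{n_\cH}{2}^2$. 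Summing over $g\neq g'$, the products $\SigSubset{i}\SigPrimeSubset{j}$ assemble into $\tauij{i}{j}$, whereas the single-graph sums $\SigSubset{i}$ pick up a factor $n_\cG-1$ from the free index $g'$ (and symmetrically for $\SigPrimeSubset{i}$), merging the ``$g$'' and ``$g'$'' contributions via $\sum_{g\neq g'}\SigSubset{i}=\sum_{g\neq g'}\SigPrimeSubset{i}=(n_\cG-1)\sum_g\SigSubset{i}$.

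Assembling the four degree contributions with the signs prescribed by Remark~\ref{rem:productOf4For_kGisnGMinus1_nH3_kH2} then produces exactly the formula in the statement. I expect no conceptual obstacle beyond the combinatorial counting; the main place to be careful is tracking how many pairs $\set{h,h'}$ contain a given $h \in V(\cH)$ (namely $n_\cH-1$) and how often pairs appear in each of the two free pair-sums, so that the coefficients $\binom{n_\cH}{2}$, $(n_\cH-1)$ and $(n_\cH-1)^2$ land in the right places.
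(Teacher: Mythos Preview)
Your proposal is correct and follows essentially the same approach as the paper: expand $\tauij{2}{2}$ by definition, apply Remark~\ref{rem:productOf4For_kGisnGMinus1_nH3_kH2} to each quartic monomial, and then collect the resulting cubic, quadratic, linear and constant pieces over the two pair-sums in $V(\cH)$ and the sum over $g\neq g'$. The paper phrases the bookkeeping via an explicit monomial-count ratio (computing a multiplier~$\delta$ for each type of term), whereas you use the equivalent identity $\sum_{\set{h,h'}}(x_{gh}+x_{gh'})=(n_\cH-1)\SigSubset{1}$ directly, but the arguments are the same in substance.
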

\begin{proof}
    By definition 
    \begin{align*}
    \tauij{2}{2}
    &= \sum_{g \in V(\cG)}\; \sum_{\substack{g' \in V(\cG) \\ g' \neq g}}\;
    \SigSubset{2} \SigPrimeSubset{2} \\
    &= \sum_{g \in V(\cG)}\; \sum_{\substack{g' \in V(\cG) \\ g' \neq g}}\;
    \left(\sum_{\set{h_1,h_2} \subseteq V(\cH)} x_{gh_1}x_{gh_2}\right)
    \left(\sum_{\set{h_3,h_4} \subseteq V(\cH)} x_{g'h_3}x_{g'h_4}\right)
    \end{align*}
    holds. 
    By using Lemma~\ref{lem:prodxghj_kGeqnGMinus1_kHeqnHMinusi} 
    as stated in Remark~\ref{rem:productOf4For_kGisnGMinus1_nH3_kH2} we obtain
    \begin{equation}\label{eq:tau22_expanded}
    \begin{aligned}
    \tauij{2}{2}
    &\equiv \sum_{g \in V(\cG)}\; \sum_{\substack{g' \in V(\cG) \\ g' \neq g}}\;
    \sum_{\substack{\set{h_1,h_2} \subseteq V(\cH) \\ \set{h_3,h_4} \subseteq V(\cH)}}
     \Big(x_{gh_1}x_{gh_2}x_{g'h_3}
     +x_{gh_1}x_{gh_2}x_{g'h_4}\\
     &\hspace*{5.5cm}+x_{gh_1}x_{g'h_3}x_{g'h_4}
     +x_{gh_2}x_{g'h_3}x_{g'h_4}\\
     &\hspace*{5.5cm} -x_{gh_1}x_{gh_2}
     -x_{gh_1}x_{g'h_3}
     -x_{gh_2}x_{g'h_3}\\
     &\hspace*{5.5cm} -x_{gh_1}x_{g'h_4}
     -x_{gh_2}x_{g'h_4}
     -x_{g'h_3}x_{g'h_4}\\
     &\hspace*{5.5cm} +x_{gh_1}
     +x_{gh_2}
     +x_{g'h_3}
     +x_{g'h_4}\\
     &\hspace*{5.5cm}-1 
    \Big) \mod \Iviz.
    \end{aligned}
    \end{equation}
    We can further reformulate~\eqref{eq:tau22_expanded} by using the following argument.  
    The monomials $x_{gh_1}x_{gh_2}x_{g'h_3}$ and $x_{gh_1}x_{gh_2}x_{g'h_4}$
    in~\eqref{eq:tau22_expanded} are both of the form $x_{gh_1}x_{gh_2}x_{g'h}$ 
    for some $\set{h_1,h_2} \subseteq V(\cH)$ and some $h \in V(\cH)$. 
    Hence due to symmetry
    \begin{align}\label{eq:explain_tau22_1}
    \sum_{\substack{\set{h_1,h_2} \subseteq V(\cH) \\ \set{h_3,h_4} \subseteq V(\cH)}}
    x_{gh_1}x_{gh_2}x_{g'h_3} +x_{gh_1}x_{gh_2}x_{g'h_4}
    \end{align}
    can be written as
    \begin{align}\label{eq:explain_tau22_2}
    \delta\sum_{\substack{\set{h_1,h_2} \subseteq V(\cH)\\ h \in V(\cH)}} 
    x_{gh_1}x_{gh_2}x_{g'h}
    \end{align}
    for some $\delta \in \Z$. 
    In order to compute $\delta$ observe that 
    there are $2\binom{n_\cH}{2}^2$ monomials of the considered form in~\eqref{eq:explain_tau22_1} 
    and that there are $n_\cH\binom{n_\cH}{2}$ monomials  of the considered form in~\eqref{eq:explain_tau22_2}.
    Hence $\delta = 2\binom{n_\cH}{2}^2/(n_\cH\binom{n_\cH}{2})$. 
    Similar arguments for the other monomials of~\eqref{eq:tau22_expanded} yield
    \begin{align*}
    \tauij{2}{2}
    &\equiv \sum_{g \in V(\cG)}\; \sum_{\substack{g' \in V(\cG) \\ g' \neq g}}\;
    \Bigg(
    \frac{2\binom{n_\cH}{2}^2}{n_\cH\binom{n_\cH}{2}}
    \sum_{\substack{\set{h_1,h_2} \subseteq V(\cH)\\ h \in V(\cH)}} 
    x_{gh_1}x_{gh_2}x_{g'h} \\
    &\hspace*{3.5cm} + \frac{2\binom{n_\cH}{2}^2}{n_\cH\binom{n_\cH}{2}}
    \sum_{\substack{\set{h_3,h_4} \subseteq V(\cH)\\ h \in V(\cH)}} 
    x_{gh}x_{g'h_3}x_{g'h_4} \\
    &\hspace*{3.5cm} - \frac{\binom{n_\cH}{2}^2}{\binom{n_\cH}{2}}\SigSubset{2} 
    - \frac{4\binom{n_\cH}{2}^2}{n_\cH^2}
    \sum_{\substack{h \in V(\cH) \\ h' \in V(\cH)}} 
    x_{gh}x_{g'h'} 
    - \frac{\binom{n_\cH}{2}^2}{\binom{n_\cH}{2}}\SigPrimeSubset{2}\\ 
    &\hspace*{3.5cm} + \frac{2\binom{n_\cH}{2}^2}{n_\cH}\SigSubset{1}
    + \frac{2\binom{n_\cH}{2}^2}{n_\cH}\SigPrimeSubset{1}
    - \binom{n_\cH}{2}^2
    \Bigg) \mod \Iviz,
    \end{align*}
    which, using the definition of $\tauij{i}{j}$ and  $\tauij{1}{2} = \tauij{2}{1}$, can be simplified to
    \begin{align*}
    \tauij{2}{2}
    &\equiv 
    2(n_\cH - 1)\tauij{2}{1} 
    - (n_\cH - 1)^2\tauij{1}{1}
    - n_\cH(n_\cH - 1)(n_\cG - 1)\sum_{g \in V(\cG)}\SigSubset{2}\\
    &\quad + n_\cH(n_\cH - 1)^2(n_\cG - 1)\sum_{g \in V(\cG)}\SigSubset{1}
    - \frac{1}{4}n_\cG(n_\cG-1)n_\cH^2(n_\cH-1)^2  \mod \Iviz.
    \end{align*}
\end{proof}


This completes the collection of result that we need in this section.

\subsection{Certificates for \texorpdfstring{$k_\cG = n_\cG-1$}{kG=nG-1}  and \texorpdfstring{$k_\cH=n_\cH -1$}{kH=nH-1} with \texorpdfstring{$n_\cH \in \set{2,3}$}{nH=2 or nH=3}}
\label{sec:certificate_kGEqnGMinus1_nH3_kH2}

Now we are finally able to prove Theorem~\ref{thm:sosCertificate_kGEqnGMinus1_kH2nH3}, which 
provides a sum-of-squares certificate of degree~$2$ for $k_\cG = n_\cG - 1 \ge 1$, $n_\cH = 3$ 
and $k_\cH = 2$. In fact we will prove the existence of sum-of-squares certificates not only in 
this case, but also for the case $k_\cG = n_\cG - 1 \ge 1$, $n_\cH = 2$ and $k_\cH = 1$
in the following theorem.

\begin{theorem} \label{thm:sosCertificate_kGEqnGMinus1_kH1nH2}
    For $k_\cG = n_\cG - 1 \ge 1$, $n_\cH \in \set{2,3}$ and $k_\cH = n_\cH - 1$
    Vizing's conjecture is true as the polynomials
    \begin{align*}
    s_{0} &=  \alpha
    + \beta \biggl( \sum_{g \in V(\cG)} \sum_{h \in V(\cH)} x_{gh} \biggr)
    + \gamma \biggl(\sum_{g \in V(\cG)}  \sum_{\set{h, h'} \subseteq V(\cH)} x_{gh}x_{gh'}\biggr)   
    \intertext{and}
    s_{g} &=  \kappa \biggl(\sum_{h \in V(\cH)}  x_{gh}  \biggr)
    - \lambda \biggl( \sum_{\set{h, h'} \subseteq V(\cH)}  x_{gh}x_{gh'} \biggr)
    \quad\text{for $g \in V(\cG)$},
    \end{align*}
    where $\alpha = \sqrt{n_\cH - 1}(n_\cG-1)$, $\beta = -\sqrt{n_\cH - 1}/n_\cH$
    and $\gamma = 2/(n_\cH\sqrt{n_\cH - 1})$,
    are a sum-of-squares certificate with degree~$2$ of $\fviz$.
    
    In particular for $n_\cH = 2$ we have $\alpha = n_\cG-1$, $\beta = -1$, $\gamma = 1$, $\kappa = 0$ and $\lambda = 1$
    and for $n_\cH = 3$ we have $\alpha = \sqrt{2}(n_\cG-1)$, $\beta = -\frac{2}{3}\sqrt{2}$
    $\gamma = \frac{1}{3}\sqrt{2}$, $\kappa = \frac{1}{3}$ and $ \lambda= -\frac{2}{3}$.
\end{theorem}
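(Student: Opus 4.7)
The plan is to verify directly that $s_0^2 + \sum_{g \in V(\cG)} s_g^2 \equiv \fviz \mod \Iviz$ for the two cases $n_\cH \in \set{2,3}$. Setting $A_i = \sum_{g \in V(\cG)} \SigSubset{i}$, the target $\fviz$ takes the simple form $A_1 - k_\cG k_\cH$, while $s_0 = \alpha + \beta A_1 + \gamma A_2$ and $s_g = \kappa \SigSubset{1} - \lambda \SigSubset{2}$. The structure is parallel to the proofs of Theorems~\ref{thm:sosCertificaten_kGeqnG_kHeqnHminus1_easy} and~\ref{thm:sosCertificaten_kGeqnG_kHeqnHminus2_easy}, but now the global polynomial $s_0$ together with one square per vertex of $\cG$ must be handled, and we lack the simplification $x_{gh} \equiv 1 \mod \Iviz$ available in the case $k_\cG = n_\cG$.

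First, I would expand $s_0^2$ via binomial expansion and note that $A_iA_j = \sum_g \SigSubset{i}\SigSubset{j} + \tauij{i}{j}$ directly from Definition~\ref{def:tau_i_j}. The diagonal pieces $\sum_g \SigSubset{i}\SigSubset{j}$ as well as the expansion of $\sum_g s_g^2$ are then rewritten using the identities collected in Remark~\ref{remark:ProdOfSumsAsSumOfSums:small-values}, which apply in the present setting because they require only the generators $x_{gh}^2 - x_{gh}$ of $\Iviz$ and no hypothesis on $k_\cG$. After collecting terms, $s_0^2 + \sum_g s_g^2$ takes the shape
\begin{equation*}
C + \phi_1 A_1 + \phi_2 A_2 + \phi_3 A_3 + \psi_{11}\tauij{1}{1} + \psi_{12}\tauij{1}{2} + \psi_{22}\tauij{2}{2},
\end{equation*}
with scalar coefficients depending polynomially on $\alpha, \beta, \gamma, \kappa, \lambda, n_\cH$ and $n_\cG$. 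For $n_\cH = 2$ the term $A_3$ vanishes automatically; for $n_\cH = 3$ it remains (no analogue of Lemma~\ref{lem:reduceSigTooLarge} is available, since $k_\cG \neq n_\cG$), so its coefficient will be forced to be zero.

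Next, Lemma~\ref{lem:compute_tau22} eliminates $\tauij{2}{2}$ by expressing it as a linear combination of $\tauij{2}{1}$, $\tauij{1}{1}$, $\sum_g \SigSubset{2}$, $\sum_g \SigSubset{1}$ and a constant. Using $\tauij{1}{2} = \tauij{2}{1}$, the whole expression reduces to a linear combination of $1$, $A_1$, $A_2$, $A_3$, $\tauij{1}{1}$ and $\tauij{2}{1}$, and the problem collapses to coefficient matching against $\fviz = A_1 - (n_\cG - 1)(n_\cH - 1)$. The remaining work is bookkeeping: substitute the case-specific values of $\alpha, \beta, \gamma, \kappa, \lambda$ from the statement and check that the coefficients of $\tauij{1}{1}$, $\tauij{2}{1}$, $A_2$ (and $A_3$ when $n_\cH = 3$) all vanish, that the coefficient of $A_1$ equals $1$, and that the constant equals $-(n_\cG-1)(n_\cH-1)$. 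I expect this to be the main obstacle, purely in terms of computation size---in particular for $n_\cH = 3$, where the vanishing of the $A_3$ coefficient is what forces the case-specific values of $\kappa, \lambda$. The verification is mechanical and well suited to SageMath~\cite{sagemath}, which is also how the coefficients were originally produced via the methodology of Section~\ref{sec:methodology}.
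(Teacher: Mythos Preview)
Your proposal is correct and follows essentially the same route as the paper's proof: expand $s_0^2+\sum_g s_g^2$, split products $A_iA_j$ into diagonal sums and $\tauij{i}{j}$, reduce diagonal products via Remark~\ref{remark:ProdOfSumsAsSumOfSums:small-values}, eliminate $\tauij{2}{2}$ via Lemma~\ref{lem:compute_tau22}, and then match coefficients against $\fviz$ case by case. The only point you leave implicit is that $\SigSubset{4}$ (hence $A_4$) also appears after applying Remark~\ref{remark:ProdOfSumsAsSumOfSums:small-values} to $\SigSubset{2}^2$, but it vanishes trivially for $n_\cH\in\{2,3\}$; the paper notes this explicitly.
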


\begin{proof}[Proof of Theorems~\ref{thm:sosCertificate_kGEqnGMinus1_kH2nH3}
  and~\ref{thm:sosCertificate_kGEqnGMinus1_kH1nH2}]    
In order to prove that the polynomials $s_0$ and $s_g$ for $g \in V(\cG)$ are a sum-of-squares certificate we have to show that $s_0^2 + \sum_{g \in V(\cG)}s_g^2 \equiv \fviz \mod \Iviz$. 
Towards that end we can rewrite the polynomials as $s_0 = \alpha + \beta \sum_{g \in V(\cG)}\SigSubset{1} + \gamma\sum_{g \in V(\cG)}\SigSubset{2}$ and $s_{g} = \kappa\SigSubset{1} + \lambda \SigSubset{2})$. This yields
\begin{align*}
s_0^2 + \sum_{g \in V(\cG)}s_g^2 
&= 
\left(\alpha + \beta \sum_{g \in V(\cG)}\SigSubset{1} + \gamma\sum_{g \in V(\cG)}\SigSubset{2}\right)^2 
+ \sum_{g \in V(\cG)} (\kappa\SigSubset{1} + \lambda\SigSubset{2})^2 \\
&=
\alpha^2 
+ 2\alpha\beta \sum_{g \in V(\cG)}\SigSubset{1} 
+ 2\alpha\gamma\sum_{g \in V(\cG)}\SigSubset{2}
+ \beta^2\bigg( \sum_{g \in V(\cG)}\SigSubset{1}\bigg)\bigg( \sum_{g' \in V(\cG)}\SigPrimeSubset{1}\bigg)\\
&\phantom{= \alpha^2}\hspace*{0.3em}
+ \gamma^2\bigg( \sum_{g \in V(\cG)}\SigSubset{2}\bigg)\bigg( \sum_{g' \in V(\cG)}\SigPrimeSubset{2}\bigg)
+ 2\beta\gamma \bigg( \sum_{g \in V(\cG)}\SigSubset{1}\bigg)\bigg( \sum_{g' \in V(\cG)}\SigPrimeSubset{2}\bigg) \\
&\phantom{= \alpha^2}\hspace*{0.3em}
+\sum_{g \in V(\cG)}\Big( \kappa^2\SigSubset{1}^2 + 2\kappa\lambda\SigSubset{1}\SigSubset{2} + \lambda^2\SigSubset{2}^2 \Big)
\end{align*}
and therefore
\begin{equation}\label{eq:sumofsquares_intermediate}
\begin{aligned}
s_0^2 + \sum_{g \in V(\cG)}s_g^2 &=
\alpha^2 
+\sum_{g \in V(\cG)} \sum_{\substack{g' \in V(\cG)\\g' \neq g}} \left(
\beta^2 \SigSubset{1}\SigPrimeSubset{1}
+\gamma^2 \SigSubset{2}\SigPrimeSubset{2}
+2\beta\gamma \SigSubset{1}\SigPrimeSubset{2}
\right) \\
&\phantom{= \alpha^2}\hspace*{0.275em}
+\bigg(\sum_{g \in V(\cG)} ( \beta^2 + \kappa^2)\SigSubset{1}^2 
+ (2\beta\gamma + 2\kappa\lambda)\SigSubset{1}\SigSubset{2} 
+ (\gamma^2 + \lambda^2)\SigSubset{2}^2 \\
&\hspace*{7.25em}
+2\alpha\beta\SigSubset{1} 
+2\alpha\gamma\SigSubset{2}
\bigg)
\end{aligned}
\end{equation}
holds.
By using Lemma~\ref{lem:compute_tau22} we obtain
\begin{align*}
\sum_{g \in V(\cG)} \sum_{\substack{g' \in V(\cG)\\g' \neq g}} &\left(
\beta^2 \SigSubset{1}\SigPrimeSubset{1}
+\gamma^2 \SigSubset{2}\SigPrimeSubset{2}
+2\beta\gamma \SigSubset{2}\SigPrimeSubset{1}
\right)\\
&=
\beta^2\tauij{1}{1} +\gamma^2\tauij{2}{2} +2\beta\gamma\tauij{1}{2}\\
&\equiv 
\left(2\beta\gamma +  2(n_\cH - 1)\gamma^2\right)\tauij{2}{1} 
+ \left( \beta^2 - (n_\cH - 1)^2\gamma^2\right)\tauij{1}{1}\\
&\phantom{\equiv}\;\,
- n_\cH(n_\cH - 1)(n_\cG - 1)\gamma^2\sum_{g \in V(\cG)}\SigSubset{2}\\
&\phantom{\equiv}\;\,
+ n_\cH(n_\cH - 1)^2(n_\cG - 1)\gamma^2 \sum_{g \in V(\cG)}\SigSubset{1}\\
&\phantom{\equiv}\;\,
- \tfrac{1}{4}n_\cG(n_\cG-1)n_\cH^2(n_\cH-1)^2\gamma^2  \mod \Iviz.
\end{align*}
Furthermore,
we rewrite $\SigSubset{1}^2$, $\SigSubset{2}\SigSubset{1}$ and $\SigSubset{2}^2$
as in Remark~\ref{remark:ProdOfSumsAsSumOfSums:small-values}, so
\begin{align*}
(\beta^2 + \kappa^2)\SigSubset{1}^2 
&+ (2\beta\gamma + 2\kappa\lambda)\SigSubset{1}\SigSubset{2} 
+ (\gamma^2 + \lambda^2)\SigSubset{2}^2 
+2\alpha\beta\SigSubset{1} 
+2\alpha\gamma\SigSubset{2}\\
&\equiv(\beta^2 + \kappa^2)(\SigSubset{1} + 2\SigSubset{2}) 
+ (2\beta\gamma + 2\kappa\lambda)(2\SigSubset{2} + 3\SigSubset{3}) \\
&\phantom{\equiv}\;\,
+ (\gamma^2 + \lambda^2)(\SigSubset{2} + 6\SigSubset{3} + 6\SigSubset{4})
+2\alpha\beta\SigSubset{1} 
+2\alpha\gamma\SigSubset{2}\\
&\equiv 
(\beta^2 + \kappa^2 +2\alpha\beta)\SigSubset{1} + 
(2\beta^2 + 2\kappa^2 + 4\beta\gamma + 4\kappa\lambda + \gamma^2 + \lambda^2 +2\alpha\gamma)\SigSubset{2}\\
&\phantom{\equiv}\;\, +
(6\beta\gamma + 6\kappa\lambda + 6\gamma^2 + 6\lambda^2)\SigSubset{3} +
(6\gamma^2 + 6\lambda^2)\SigSubset{4} \mod \Iviz.
\end{align*}
The previous two identities together with \eqref{eq:sumofsquares_intermediate} 
and the fact that $\SigSubset{4} = 0$ trivially holds because $n_\cH \in \set{2,3}$ yield
\begin{align*}
s_0^2 + \sum_{g \in V(\cG)}s_g^2 
&\equiv
\bigl(\alpha^2 - \tfrac{1}{4}n_\cG(n_\cG-1)n_\cH^2(n_\cH-1)^2\gamma^2 \bigr) 
 + 2\bigl(\beta\gamma + (n_\cH - 1)\gamma^2\bigr)\tauij{2}{1} \\
&\phantom{\equiv}\;\,
+ \bigl( \beta^2 - (n_\cH - 1)^2\gamma^2\bigr)\tauij{1}{1} \\
&\phantom{\equiv}\;\,
+\bigl(\beta^2 + \kappa^2 +2\alpha\beta + n_\cH(n_\cH - 1)^2(n_\cG - 1)\gamma^2\bigr)\sum_{g \in V(\cG)}\SigSubset{1} \\
&\phantom{\equiv}\;\,
+ 
\bigl(2\beta^2 + 2\kappa^2 + 4\beta\gamma + 4\kappa\lambda + \gamma^2 + \lambda^2 \\
&\hspace*{4.425em}
+2\alpha\gamma - n_\cH(n_\cH - 1)(n_\cG - 1)\gamma^2\bigr)\sum_{g \in V(\cG)}\SigSubset{2}\\
&\phantom{\equiv}\;\,
+
6(\beta\gamma + \kappa\lambda + \gamma^2 + \lambda^2)\sum_{g \in V(\cG)}\SigSubset{3}
 \mod \Iviz.
\end{align*}
In order to obtain a certificate $s_0^2 + \sum_{g \in V(\cG)}s_g^2 \equiv \fviz = -k_\cG k_\cH + \sum_{g \in V(\cG)} \SigSubset{1} \mod \Iviz$ has to hold.
For $n_\cH = 2$ we have $\SigSubset{3} = 0$, so in this case 
every solution to the system of equations
\begin{align*}
\alpha^2 - \frac{1}{4}n_\cG(n_\cG-1)n_\cH^2(n_\cH-1)^2\gamma^2 & = -(n_\cH - 1)(n_\cG-1),\\
\beta\gamma +  (n_\cH - 1)\gamma^2 & = 0,\\
\beta^2 - (n_\cH - 1)^2\gamma^2 & = 0,\\
\beta^2 + \kappa^2 +2\alpha\beta + n_\cH(n_\cH - 1)^2(n_\cG - 1)\gamma^2 & = 1,\\
2\beta^2 + 2\kappa^2 + 4\beta\gamma + 4\kappa\lambda + \gamma^2 + \lambda^2 +2\alpha\gamma - n_\cH(n_\cH - 1)(n_\cG - 1)\gamma^2 & = 0
\end{align*}
yields a valid certificate. It is easy to check that $\alpha = n_\cG-1$, $\beta = -1$, $\gamma = 1$, $\kappa = 0$ and $\lambda = 1$ is a solution.

For $n_\cH = 3$ the above equations and also   
\begin{align*}
\beta\gamma + \kappa\lambda + \gamma^2 + \lambda^2 & = 0
\end{align*}
has to be fulfilled in order to obtain a certificate.
Also in this case it can be verified easily that 
$\alpha = \sqrt{2}(n_\cG-1)$, $\beta = -\frac{2}{3}\sqrt{2}$,
 $\gamma = \frac{1}{3}\sqrt{2}$, $\kappa = \frac{1}{3}$ and $ \lambda= -\frac{2}{3}$ is a solution to the system of equations.
Therefore the polynomials $s_0$ and $s_g$ for $g \in V(\cG)$ form indeed a certificate.
\end{proof}

\subsection{Missing Certificates for \texorpdfstring{$k_\cG = n_\cG-1$}{kG=nG-1} and \texorpdfstring{$k_\cH= n_\cH- 1$}{kH=nH-1} with \texorpdfstring{$n_\cH \ge 4$}{nH greater equal 4}}
Previously we have seen that in many cases 
it is possible to obtain a certificate 
not only for particular values of $n_\cH$ and $k_\cH$, 
but for general values, 
like it was done in Section~\ref{sec:proof_of_kG=nG_and_kH=nH-1}. 
Therefore it is a natural question, 
whether we can generalize the certificate 
from Theorem~\ref{thm:sosCertificate_kGEqnGMinus1_kH2nH3} 
for the case $k_\cG = n_\cG - 1 \ge 1$, $n_\cH = 3$ and $k_\cH = 2$
to a certificate for the case $k_\cG = n_\cG-1 \ge 1$ and $k_\cH = n_\cH - 1 \ge 1$.
We have successfully generalized the certificate for $n_\cH= 2$ with 
Theorem~\ref{thm:sosCertificate_kGEqnGMinus1_kH1nH2}.
Unfortunately it turns out that this is not possible for $n_\cH \ge 4$.

\begin{example}
    There seems to be no $2$-sum-of-squares certificate for the case 
    $n_\cG = 4$, $k_\cG=3$, $n_\cH = 4$, $k_\cH = 3$ 
    that uses only the monomials of the form 
    $1$, $x_{gh}$ and $x_{gh}x_{gh'}$ 
    for all $g \in V(\cG)$ and all $\set{h,h'} \subseteq V(\cH)$, 
    as the corresponding SDP is infeasible. 
    
    The SDP for the case $n_\cG = 4$, $k_\cG=3$, $n_\cH = 4$, $k_\cH = 3$ which takes into account all monomials of degree at most $2$ is feasible. Therefore we expect that there is an exact $2$-sum-of-squares certificate using all monomials also for these parameter values.
\end{example}

When we take a closer look on the proofs of 
Section~\ref{sec:auxiliaryResults_kGIsnGMinus1_nH3_kH2} 
and~\ref{sec:certificate_kGEqnGMinus1_nH3_kH2} 
we get some insight in why this is the case. 
First, Lemma~\ref{lem:prodxghj_kGeqnGMinus1_kHeqnHMinusi} 
is not true anymore for $n_\cG \ge 4$, 
so we can not use the reduction 
of all products of 4 variables as presented in 
Remark~\ref{rem:productOf4For_kGisnGMinus1_nH3_kH2}. 
Furthermore $\SigSubset{4}$ is not equal to $0$ anymore for $n_\cG \ge 4$, 
so in the proof of Theorem~\ref{thm:sosCertificate_kGEqnGMinus1_kH2nH3} 
the coefficient of $\SigSubset{4}$ would have to be $0$, 
which is not possible as the coefficient is $6\gamma^2 + 6\lambda^2$.

As a result we would have to search for a certificate 
with more monomials than just $1$, $x_{gh}$ and $x_{gh}x_{gh'}$ 
for the case $k_\cG = n_\cG-1$ and $k_\cH= n_\cH- 1$ for $n_\cH \ge 4$.

\section{Conclusions and Future Work} \label{sec_conc}

\subsection{Conclusions}
In this project, we modeled Vizing's conjecture as an ideal/\allowbreak poly\-nomial
pair such that the polynomial is non-negative on the variety of a
particularly constructed ideal if and only if Vizing's conjecture is
true. We were able to produce low-degree, sparse Positivstellensatz
certificates of non-negativity for certain classes of graphs using an
innovative collection of techniques ranging from semidefinite
programming to clever guesswork to computer algebra. 

In particular, Vizing's conjecture with parameters $k_\cG = n_\cG - 1 \ge 1$, 
$k_\cH = n_\cH - 1$ and $n_\cH \in \set{2,3}$ has a $2$-sum-of-squares Positivstellensatz 
certificate.
Furthermore
Vizing's conjecture with parameters $k_\cG = n_\cG$ and $k_\cH = n_\cH - d$
has a $d$-sum-of-squares Positivstellensatz certificate for~$d\le 4$.
We have conjectured a broader combinatorial
pattern based on these certificates, but proving validity is left to
future work. 

However, at this time, we have indeed proved Vizing's
conjecture for several classes of graphs using sum-of-squares
certificates. Although we have not advanced what is currently known
about Vizing's conjecture, we have introduced a completely new
technique (still to be thoroughly explored) to the literature of
possible approaches.

\subsection{Future Work}
The most pressing matter that arises in this paper is the following.
We have investigated the case  $k_\cG = n_\cG$ and 
$k_\cH = n_\cH - d$. In the future we want to prove 
Conjecture~\ref{con:sosCertificaten_kGeqnG_kHeqnHminusi_easy} or find other 
certificates for the cases $d\ge 5$. In particular it would be interesting to 
know if there is an easy structure for the leading coefficient $\alpha_d$ in 
such a certificate as mentioned in Remark~\ref{remark:question-alpha-d}.

On a small scale, 
in order to obtain more insight on the structure of certificates a
next step will be to investigate further specific parameter
settings. In particular, finding a certificate for the case
$k_\cG = 1$ (and all other parameters arbitrary) is among our next
candidates.

On a large scale, it is known that Vizing's conjecture holds if one of the 
graphs $G$ or
$H$ has domination number at most three~\cite{viz_survey_2009},
therefore, to find new results we need to get certificates for
$k_\cG \ge 4$ and $k_\cH \ge 4$.  Furthermore, it suffices to consider
graphs that contain no isolated vertices. For such graphs the number
of vertices is at least twice the domination number~\cite{ore-1962}. 
Hence, parameters
where we can obtain new results on Vizing's conjecture must satisfy
$n_\cG \ge 8$, $k_\cG \ge 4$, $n_\cH \ge 8$, and $k_\cH \ge 4$.

Therefore, in our
future work we intend to continue pushing the computational aspect of
this project.  One way to do so is to exploit symmetries in order to
simplify the computation of a Gröbner basis, as computing the
Göbner basis is one of the computational bottlenecks.
One alternative possibility to deal with this bottleneck 
is to avoid the computation of a Gröbner basis
by increasing the number of variables in the SDP.
Another question of interest is 
if one could use symmetry 
to reduce the complexity of the SDP.

Up to now we always used the solution of the SDP in order to obtain insight in 
the structure of the certificate and then algebraic manipulations yielded the 
actual certificate. It would be interesting to solve 
the SDP exactly over the algebraic reals and not only to a high precision over the 
rationals, in order to obtain an exact certificate as soon as the SDP is 
solved. However, this is a highly non-trivial task and is left for future 
research.

Another line of research is to change the
model from a Positivstellensatz certificate to a Hilbert's
Nullstellensatz certificate, and thus change from numeric
semidefinite programming to exact arithmetic linear algebra. This
approach must also be thoroughly investigated.

Finally, it would be
very interesting to conjecture a global relationship between the
values of $n_\cG$, $n_\cH$, $k_\cG$ and $k_\cH$, and the degree of the
Positivstellensatz certificate, and perhaps even recast the conjecture
in terms of the theta body hierarchy described in \cite{theta}.

\bibliographystyle{amsplain}
{\small\bibliography{papers}}

\providecommand{\bysame}{\leavevmode\hbox to3em{\hrulefill}\thinspace}
\providecommand{\MR}{\relax\ifhmode\unskip\space\fi MR }
\providecommand{\MRhref}[2]{%
  \href{http://www.ams.org/mathscinet-getitem?mr=#1}{#2}
}
\providecommand{\href}[2]{#2}
\begin{thebibliography}{10}

\bibitem{alontarsi}
Noga Alon and Michael Tarsi, \emph{Colorings and orientations of graphs},
  Combinatorica \textbf{12} (1992), 125--134.

\bibitem{sos_approx}
Boaz Barak, Samuel Hopkins, Jonatan Kelner, Pravesh Kothari, Ankur Moitra, and
  Aaron Potechin, \emph{A {N}early {T}ight {S}um-of-{S}quares {L}ower {B}ound
  for the {P}lanted {C}lique {P}roblem}, FOCS 2016 IEEE 57th Annual Symposium
  on Foundations of Computer Science, 2016.

\bibitem{sos_quantum}
Boaz Barak, Pravesh~K. Kothari, and David Steurer, \emph{Quantum entanglement,
  sum of squares, and the log rank conjecture.}, Proceedings of the 49th annual
  ACM SIGACT symposium on theory of computing, STOC '17, Montreal, QC, Canada,
  June 19--23, 2017, New York, NY: Association for Computing Machinery (ACM),
  2017, pp.~975--988.

\bibitem{viz_bar_ger}
A.~Barcalkin and L.~German, \emph{The external stability number of the
  cartesian product of graphs}, Bul. Akad. Stiinte RSS Moldoven. \textbf{1}
  (1979), no.~94, 5--8.

\bibitem{sos-instructions}
Grigoriy {Blekherman}, Pablo~A. {Parrilo}, and Rekha~R. {Thomas} (eds.),
  \emph{Semidefinite optimization and convex algebraic geometry}, Philadelphia,
  PA: Society for Industrial and Applied Mathematics (SIAM), 2013.

\bibitem{viz_survey_2009}
Bo\v{s}tjan Bre\v{s}ar, Paul Dorbec, Wayne Goddard, Bert~L. Hartnell,
  Michael~A. Henning, Sandi Klav\v{z}ar, and Douglas~F. Rall, \emph{{V}izing's
  conjecture: a survey and recent results}, J. Graph Theory \textbf{69} (2012),
  no.~1-2, 46--76.

\bibitem{coxetal}
David~A. Cox, John Little, and Donal O'Shea, \emph{{I}deals, {V}arieties, and
  {A}lgorithms: {A}n {I}ntroduction to {C}omputational {A}lgebraic {G}eometry
  and {C}ommutative {A}lgebra, 3/e (undergraduate texts in mathematics)},
  Springer-Verlag New York, Inc., Secaucus, NJ, USA, 2007.

\bibitem{sagemath}
The~SageMath Developers, \emph{{SageMath} {M}athematics {S}oftware ({V}ersion
  8.6)}, 2019, \url{http://www.sagemath.org}.

\bibitem{fischer}
Klaus~G. Fischer, \emph{Symmetric polynomials and {H}all's theorem}, Discrete
  Math \textbf{69} (1988), no.~3, 225--234.

\bibitem{Gaar-Krenn-Margulies-Wiegele:2019:vizing-optimization-sos}
Elisabeth Gaar, Daniel Krenn, Susan Margulies, and Angelika Wiegele, \emph{An
  optimization-based sum-of-squares approach to {V}izing's conjecture},
  Proceedings of the 2019 on International Symposium on Symbolic and Algebraic
  Computation (New York, NY, USA), ISSAC '19, ACM, 2019, pp.~155--162.

\bibitem{garey_and_johnson}
Michael~R. Garey and David~S. Johnson, \emph{Computers and intractability: A
  guide to the theory of {NP}-completeness}, W.H. Freeman and Company, 1979.

\bibitem{interp}
Mariano Gasca and Thomas Sauer, \emph{{P}olynomial {I}nterpolation in {S}everal
  {V}ariables}, Advances in Computational Mathematics \textbf{12} (2012),
  no.~4, 377--410.

\bibitem{theta}
Jo\~{a}o Gouveia, Pablo~A. Parrilo, and Rekha~R. Thomas, \emph{{T}heta {B}odies
  for {P}olynomial {I}deals}, SIAM Journal of Optimization \textbf{20} (2010),
  no.~4, 2097--2118.

\bibitem{sos_proof}
Dima Grigoriev and Nicolai Vorobjov, \emph{Complexity of {N}ull- and
  {P}ositivstellensatz proofs}, Annals of Pure and Applied Logic \textbf{113}
  (2001), no.~1--3, 153--160.

\bibitem{viz_dom_bk}
Bert Hartnell and Douglas~F. Rall, \emph{Domination in cartesian products:
  {V}izing's conjecture}, Domination in graphs (New York), Mono. Textbooks Pure
  and Appl. Math, Dekker, 1998, pp.~163--189.

\bibitem{hillarwindfeldt}
Christopher~J. Hillar and Troels Windfeldt, \emph{Algebraic characterization of
  uniquely vertex colorable graphs}, J. Comb. Theory, Ser. B \textbf{98}
  (2008), no.~2, 400--414.

\bibitem{sos_control}
Zachary Jarvis-Wloszek, Ryan Feeley, Weehong Tan, Kunpeng Sun, and Andrew
  Packard, \emph{Control applications of sum of squares programming},
  pp.~3--22, Springer Berlin Heidelberg, Berlin, Heidelberg, 2005.

\bibitem{kreuzer}
Martin Kreuzer and Lorenzo Robbiano, \emph{Computational commutative algebra.
  {I}}, Berlin: Springer, 2000.

\bibitem{sos_geom}
Monique Laurent, \emph{{S}ums of {S}quares, {M}oment {M}atrices and
  {O}ptimization over {P}olynomials}, The IMA Volumes in Mathematics and its
  Applications \textbf{149} (2008), 157--270.

\bibitem{susan_cpc}
Jesus~A.~De Loera, Jon Lee, Susan Margulies, and Shmuel Onn, \emph{Expressing
  combinatorial optimization problems by systems of polynomial equations and
  the {N}ullstellensatz}, Combinatorics, Probability and Computing \textbf{18}
  (2009), no.~4, 551--582.

\bibitem{lovasz1}
Laszlo Lov\'asz, \emph{Stable sets and polynomials}, Discrete Mathematics
  \textbf{124} (1994), 137--153.

\bibitem{susan_viz1}
Susan Margulies and Illya~V. Hicks, \emph{An algebraic exploration of
  dominating sets and {V}izing's conjecture.}, {Electron. J. Comb.} \textbf{19}
  (2012), no.~2, research paper p1, 30.

\bibitem{mosek}
{MOSEK ApS}, \emph{The {MOSEK} optimization toolbox for {MATLAB} manual.
  {V}ersion 8.1.}, 2017.

\bibitem{Onn}
Shmuel Onn, \emph{Nowhere-zero flow polynomials}, Journal of Combinatorial
  Theory, Series A \textbf{108} (2004), no.~2, 205--215.

\bibitem{ore-1962}
{\O}ystein {Ore}, \emph{Theory of graphs}, vol.~38, American Mathematical
  Society (AMS), Providence, RI, 1962.

\bibitem{viz_sun}
Liang {Sun}, \emph{A result on {V}izing's conjecture}, Discrete Mathematics
  \textbf{275} (2004), no.~1, 363--366.

\bibitem{VandenbergheBoyd}
Lieven Vandenberghe and Stephen Boyd, \emph{Semidefinite programming}, SIAM
  Rev. \textbf{38} (1996), no.~1, 49--95.

\bibitem{vizing}
Vadim~G. Vizing, \emph{Some unsolved problems in graph theory}, Uspekhi Mat.
  Nauk \textbf{23} (1968), 117--134.

\bibitem{handbook-1}
Henry Wolkowicz, Romesh Saigal, and Lieven Vandenberghe (eds.), \emph{Handbook
  of semidefinite programming. {T}heory, algorithms, and applications},
  vol.~27, Springer, Cham, 2000.

\bibitem{Zerbib2019}
Shira Zerbib, \emph{An improved bound in {V}izing's conjecture}, Graphs and
  Combinatorics (2019).

\end{thebibliography}

{\small
  \vspace*{1ex}\noindent
  Elisabeth Gaar,
  \href{mailto:elisabeth.gaar@aau.at}{\url{elisabeth.gaar@aau.at}},
  Alpen-Adria-Universität Klagenfurt,
  Universitätsstraße 65--67, 9020 Klagenfurt, Austria

  \vspace*{1ex}\noindent
  Daniel Krenn,
  \href{mailto:math@danielkrenn.at}{\url{math@danielkrenn.at}},
  Paris Lodron University of Salzburg,
  Hellbrunnerstraße 34, 5020 Salzburg, Austria
  
  \vspace*{1ex}\noindent
  Susan Margulies,
  \href{mailto:margulie@usna.edu}{\url{margulie@usna.edu}},
  United States Naval Academy, Annapolis, MD, USA
  
  \vspace*{1ex}\noindent
  Angelika Wiegele,
  \href{mailto:angelika.wiegele@aau.at}{\url{angelika.wiegele@aau.at}},
  Alpen-Adria-Universität Klagenfurt,
  Universitätsstraße 65--67, 9020 Klagenfurt, Austria
}

\newpage
\appendix

\section{Appendix: More Complicated ``Intermediate'' Certificates}
\label{sec:firstFoundMoreComplexCertificates}
In 
Sections~\ref{sec:sosCertificaten_kGeqnG_kHeqnH_easy},~\ref{sec:sosCertificaten_kGeqnG_kHeqnHminus1_easy}
 and~\ref{sec:sosCertificaten_kGeqnG_kHeqnHminus2_easy} 
we presented simple sum-of-squares certificates for the case 
$k_\cG = n_\cG$ and $k_\cH = n_\cH-d$ with $d\in \set{0,1,2}$.
In fact, these easy certificates where obtained only after some computational
experiments, in which more complicated certificates were found. We present
these intermediate results and certificates here in this appendix.

For obtaining such a certificate, we use the machinery presented in
Section~\ref{sec:step:numerical-certificate} to get a numerical
certificate. From this, we can guess a structure of the occurring
coefficients, like it was done in Example~\ref{example:3232-2}.  We
will see that these more complicated certificates---they were found by
an SDP solver---have a geometric aspect. By studying this aspect
it was possible to simplify the more complicated certificates to the 
certificates presented in Sections~\ref{sec:sosCertificaten_kGeqnG_kHeqnHminus1_easy} 
and~\ref{sec:sosCertificaten_kGeqnG_kHeqnHminus2_easy}. Hence retrospectively,
these more complicated certificates are formally not needed for the proofs of existence of 
sum-of-squares certificates. Nevertheless we include them here to give a 
more accurate and complete picture of the process of how to obtain certificates.

\subsection{\texorpdfstring{$k_\cG = n_\cG$}{kG=nG} and \texorpdfstring{$k_\cH = n_\cH-1$}{kH=nH-1}}
\label{sec:firstFoundMoreComplexCertificates_d1}
In this case the certificates found by observing a structure and
guessing the coefficients of the numerical certificate have the
following form.

\begin{theorem}\label{thm:sosCertificaten_kGeqnG_kHeqnHminus1}
  For $k_\cG = n_\cG \ge 1$ and $k_\cH = n_\cH-1 \ge 1$, Vizing's conjecture
  is true as the polynomials
    \begin{align*}
    s'_{i} &= \frac{1}{\sqrt{n_\cG}} \sum_{g \in V(\cG)} \lambda_{g,i} \Bigg( \sum_{h \in V(\cH)}  x_{gh}\Bigg)
    \intertext{for $i \in\set{1, \dots, n_\cG-1}$ and}
    s'_{n_\cG} &= \frac{1}{\sqrt{n_\cG}}\Bigg( - k_\cG k_\cH + \sum_{(g,h) \in V(\cG) \times V(\cH)} x_{gh}\Bigg),
    \end{align*}
    where $\lambda_{g,i}$ are solutions to the system of equations
    \begin{subequations}
        \label{systemOfEquations_kEqualsn_kPrimeEqualsnPrimeMinusOne}
        \begin{alignat}{2}
          \sum_{i=1}^{n_\cG-1} \lambda_{g,i}^2 &= n_\cG-1 &
          &\quad\text{for $g \in V(\cG)$},
        \label{systemOfEquations_kEqualsn_kPrimeEqualsnPrimeMinusOne1} \\
          \sum_{i=1}^{n_\cG-1} \lambda_{g,i}\lambda_{g',i} &= -1 &
          &\quad\text{for $\set{g, g'} \subseteq V(\cG)$},
        \label{systemOfEquations_kEqualsn_kPrimeEqualsnPrimeMinusOne2}
        \end{alignat}
    \end{subequations}
    are a $1$-sos certificate of $\fviz$.
\end{theorem}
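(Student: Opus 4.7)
The plan is to verify directly that $\sum_{i=1}^{n_\cG}(s'_i)^2 \equiv \fviz \mod \Iviz$, making essential use of the symmetric-polynomial calculus developed in Section~\ref{sec:sigma-calculus} (in particular Remark~\ref{remark:rules:SigSubset:d1}). I shall rewrite $\sum_{h \in V(\cH)} x_{gh} = \SigSubset{1}$ throughout, so that each $s'_i$ with $i < n_\cG$ becomes the short expression $\frac{1}{\sqrt{n_\cG}}\sum_{g \in V(\cG)} \lambda_{g,i}\SigSubset{1}$, while $s'_{n_\cG}$ becomes $\frac{1}{\sqrt{n_\cG}}\bigl(\sum_g \SigSubset{1} - n_\cG(n_\cH-1)\bigr)$, since $k_\cG k_\cH = n_\cG(n_\cH-1)$.

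Expanding and summing for $i = 1, \dots, n_\cG - 1$ gives
$\sum_{i=1}^{n_\cG-1}(s'_i)^2 = \frac{1}{n_\cG}\sum_{g, g' \in V(\cG)}\bigl(\sum_i \lambda_{g,i}\lambda_{g',i}\bigr)\SigSubset{1}\SigPrimeSubset{1}$,
so the defining equations~\eqref{systemOfEquations_kEqualsn_kPrimeEqualsnPrimeMinusOne1} and~\eqref{systemOfEquations_kEqualsn_kPrimeEqualsnPrimeMinusOne2} collapse the inner sum to $n_\cG - 1$ on the diagonal and $-1$ off the diagonal. Separately, expanding $(s'_{n_\cG})^2$ and introducing $T = \sum_g \SigSubset{1}$ produces $\frac{1}{n_\cG}T^2$, a linear part $-2(n_\cH-1)T$, and a constant $n_\cG(n_\cH-1)^2$. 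Writing $T^2 = \sum_g \SigSubset{1}^2 + \sum_{g \ne g'} \SigSubset{1}\SigPrimeSubset{1}$ and combining with the first batch, the off-diagonal terms $\SigSubset{1}\SigPrimeSubset{1}$ cancel exactly (coefficient $-\frac{1}{n_\cG}$ from the first sum versus $+\frac{1}{n_\cG}$ from the second), and the diagonal terms combine to give coefficient $\frac{n_\cG-1}{n_\cG} + \frac{1}{n_\cG} = 1$.

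Thus the total sum reduces to $\sum_g \SigSubset{1}^2 - 2(n_\cH-1)T + n_\cG(n_\cH-1)^2$. Substituting $\SigSubset{1}^2 \equiv (2n_\cH-1)\SigSubset{1} - n_\cH(n_\cH-1) \mod \Iviz$ from Remark~\ref{remark:rules:SigSubset:d1} in each summand and simplifying, the expression collapses to $T - n_\cG(n_\cH-1) = T - k_\cG k_\cH$, which is precisely $\fviz$.

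It remains to justify that real numbers $\lambda_{g,i}$ satisfying system~\eqref{systemOfEquations_kEqualsn_kPrimeEqualsnPrimeMinusOne} exist at all. This system is equivalent to finding an $(n_\cG-1) \times n_\cG$ real matrix $\Lambda = (\lambda_{g,i})^T$ with $\Lambda^T \Lambda = n_\cG I - J$, where $J$ is the all-ones matrix of size $n_\cG$. The matrix $n_\cG I - J$ is positive semidefinite of rank $n_\cG - 1$ (its eigenvalues are $0$ with multiplicity one and $n_\cG$ with multiplicity $n_\cG - 1$), so such a factorization exists; geometrically, the columns of $\Lambda$ can be taken to be the vertices of a regular $(n_\cG-1)$-simplex centred at the origin of $\R^{n_\cG-1}$. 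The main obstacle in the argument is keeping track of the cross-cancellation between $\sum_{i<n_\cG}(s'_i)^2$ and $(s'_{n_\cG})^2$ cleanly; once that bookkeeping is set up, the reduction modulo $\Iviz$ follows by a single application of the identity for $\SigSubset{1}^2$.
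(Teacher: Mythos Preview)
Your proof is correct and follows essentially the same route as the paper: both expand $\sum_i (s'_i)^2$, use the $\lambda$-relations~\eqref{systemOfEquations_kEqualsn_kPrimeEqualsnPrimeMinusOne} to kill the off-diagonal $\SigSubset{1}\SigPrimeSubset{1}$ terms, and arrive at $\sum_g(\SigSubset{1}-(n_\cH-1))^2$, which the paper then identifies with the $s_g$ of Theorem~\ref{thm:sosCertificaten_kGeqnG_kHeqnHminus1_easy} while you instead directly apply Remark~\ref{remark:rules:SigSubset:d1}. The only genuine difference is your existence argument for the $\lambda_{g,i}$ via the factorization of the positive semidefinite rank-$(n_\cG-1)$ matrix $n_\cG I - J$, whereas the paper writes down an explicit recursive solution in a separate lemma.
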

We can prove this theorem directly, but go a different way here: This
result is one intermediate step and useful and necessary for
conjecturing
Theorem~\ref{thm:sosCertificaten_kGeqnG_kHeqnHminus1_easy}. But once
Theorem~\ref{thm:sosCertificaten_kGeqnG_kHeqnHminus1_easy} is proved,
Theorem~\ref{thm:sosCertificaten_kGeqnG_kHeqnHminus1} is not a
dependency anymore. Therefore, we can reuse the statement made in
Theorem~\ref{thm:sosCertificaten_kGeqnG_kHeqnHminus1_easy} in the
proof here without falling into a cyclic argumentation.

\begin{proof}[Proof of Theorem~\ref{thm:sosCertificaten_kGeqnG_kHeqnHminus1}]
  In Theorem \ref{thm:sosCertificaten_kGeqnG_kHeqnHminus1_easy} we
  have already determined a $1$-sos certificate of $\fviz$ with
  $s_g$, $g\in V(\cG)$, so we know that
  $\sum_{g \in V(\cG)} s_g^2 \equiv \fviz \mod \Iviz$. Hence in
  order to prove that also the $s'_i$ form a certificate, it is enough
  to prove that
  $ \sum_{i=1}^{n_\cG} (s_{i}')^2 = \sum_{g \in V(\cG)} s_g^2$.
  We use the abbreviation $\SigSubset{1} = \sum_{h \in V(\cH)}  x_{gh}$ 
(see Definition~\ref{def:SigSubset_g_i})
  and do this by
  \begin{align*}
    \sum_{i=1}^{n_\cG} (s_{i}')^2
    & = \frac{1}{n_\cG} \Bigg[ \sum_{i=1}^{n_\cG-1} \Bigg(
      \sum_{g \in V(\cG)}\lambda_{g,i}^2\SigSubset{1}^2
      + 2\sum_{\set{g,g'} \subseteq V(\cG)} \lambda_{g,i}\lambda_{g',i}\SigSubset{1}\SigPrimeSubset{1}
    \Bigg) \\
    &\quad \quad \quad \quad
      + (k_\cG k_\cH)^2 - 2k_\cG k_\cH\sum_{g \in V(\cG)}\SigSubset{1}
      + \sum_{g \in V(\cG)}\SigSubset{1}^2
      + 2\sum_{\set{g,g'} \subseteq V(\cG)}\SigSubset{1}\SigPrimeSubset{1}
    \Bigg]\\
    & = \frac{1}{n_\cG} \Bigg[ \sum_{g \in V(\cG)}
      \Bigg(1 +  \sum_{i=1}^{n_\cG-1}\lambda_{g,i}^2 \Bigg) \SigSubset{1}^2
      + 2\sum_{\set{g,g'} \subseteq V(\cG)}
    \Bigg(1 +  \sum_{i=1}^{n_\cG-1}\lambda_{g,i}\lambda_{g',i} \Bigg)  \SigSubset{1}\SigPrimeSubset{1} \\
    & \quad \quad \quad \quad
      + (k_\cG k_\cH)^2 - 2k_\cG k_\cH\sum_{g \in V(\cG)}\SigSubset{1}
    \Bigg] \\
    & \overset{\eqref{systemOfEquations_kEqualsn_kPrimeEqualsnPrimeMinusOne}}{=} \frac{1}{n_\cG} \Bigg[ n_\cG\sum_{g \in V(\cG)} \SigSubset{1}^2 + (k_\cG k_\cH)^2
      - 2k_\cG k_\cH \sum_{g \in V(\cG)}\SigSubset{1}
    \Bigg] \\
    & \overset{k_\cG = n_\cG}{=}
      \sum_{g \in V(\cG)} \SigSubset{1}^2 + k_\cG k_\cH^2 - 2k_\cH\sum_{g \in V(\cG)}\SigSubset{1} \\
    & = \sum_{g \in V(\cG)} \big( \SigSubset{1}^2 - 2k_\cH \SigSubset{1} + k_\cH^2 \big)
    = \sum_{g \in V(\cG)} \big( \SigSubset{1} - k_\cH\big)^2
    = \sum_{g \in V(\cG)} s_g^2,
  \end{align*}
  and so the proof is complete.
\end{proof}

Theorem~\ref{thm:sosCertificaten_kGeqnG_kHeqnHminus1} requires the
solution of a system of equations. We obtain a solution
in the following explicit form.

\begin{lemma}
  Suppose $n_\cG$ is a positive integer and $V(\cG)=\set{1,\dots,n_\cG}$.
  For $g\in V(\cG)$ and $i\in\set{1,\dots,n_\cG-1}$ define
  \begin{equation*}
    \lambda_{g,i} =
    \begin{cases}
      0 &
      \quad\text{for $i<n_\cG-g$}, \\[2ex]
      \displaystyle
      \sqrt{\frac{n_\cG(n_\cG-g)}{n_\cG - g + 1}} &
      \quad\text{for $i=n_\cG-g$}, \\[3ex]
      \displaystyle
      -\frac{\lambda_{n_\cG-i,i}}{i} &
      \quad\text{for $i>n_\cG-g$}.
    \end{cases}
  \end{equation*}
  Then these $\lambda_{g,i}$ are a solution to
  the system of
  equations~\eqref{systemOfEquations_kEqualsn_kPrimeEqualsnPrimeMinusOne}.
\end{lemma}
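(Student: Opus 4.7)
The plan is to first eliminate the recursion in the definition of $\lambda_{g,i}$ and then to evaluate the two required sums by straightforward telescoping. I would unfold the recursive case $i > n_\cG - g$ as follows: the recursion references $\lambda_{n_\cG - i, i}$, and substituting $g = n_\cG - i$ into the pivot formula (which gives $n_\cG - g = i$) yields $\lambda_{n_\cG - i, i} = \sqrt{n_\cG i/(i+1)}$, so $\lambda_{g, i} = -\sqrt{n_\cG/(i(i+1))}$ for every $g$ with $i > n_\cG - g$. Crucially, this value depends only on $i$ and not on $g$. The resulting closed form is
\begin{equation*}
  \lambda_{g,i} =
  \begin{cases}
    0 & \text{if $i < n_\cG - g$,}\\[0.5ex]
    \sqrt{\dfrac{n_\cG(n_\cG - g)}{n_\cG - g + 1}} & \text{if $i = n_\cG - g$,}\\[1.5ex]
    -\sqrt{\dfrac{n_\cG}{i(i+1)}} & \text{if $i > n_\cG - g$.}
  \end{cases}
\end{equation*}

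For equation~\eqref{systemOfEquations_kEqualsn_kPrimeEqualsnPrimeMinusOne1}, I would split the sum $\sum_{i=1}^{n_\cG-1}\lambda_{g,i}^2$ at the pivot index $i = n_\cG - g$; the boundary case $g = n_\cG$ has no pivot in the range and is handled directly. The terms with $i < n_\cG - g$ vanish, the pivot contributes $n_\cG(n_\cG - g)/(n_\cG - g + 1)$, and the constant region telescopes as $n_\cG \sum_{i = n_\cG - g + 1}^{n_\cG - 1} \bigl(\tfrac{1}{i} - \tfrac{1}{i+1}\bigr) = n_\cG\bigl(\tfrac{1}{n_\cG - g + 1} - \tfrac{1}{n_\cG}\bigr)$. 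Adding the last two quantities gives $n_\cG - 1$, as required.

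For equation~\eqref{systemOfEquations_kEqualsn_kPrimeEqualsnPrimeMinusOne2}, by symmetry I would assume $g < g'$, so that $n_\cG - g > n_\cG - g'$. Terms with $i < n_\cG - g$ vanish since $\lambda_{g,i} = 0$. At the pivot $i = n_\cG - g$, the inequality $i > n_\cG - g'$ puts $\lambda_{g',i}$ in its constant region, and a direct multiplication gives $\lambda_{g,i}\lambda_{g',i} = -n_\cG/(n_\cG - g + 1)$. For $i > n_\cG - g$, both factors lie in the constant region and their product equals $n_\cG/(i(i+1))$, which telescopes to $n_\cG/(n_\cG - g + 1) - 1$ by exactly the same computation as before. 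Summing the pivot contribution with the telescoped tail yields $-1$.

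The main obstacle is purely bookkeeping rather than conceptual: one must identify which regime each $\lambda_{g,i}$ belongs to and carefully treat the boundary cases where $g$ or $g'$ equals $n_\cG$ (so the pivot index is absent from $\set{1,\dots,n_\cG-1}$). Once the closed form above is in hand, both identities reduce to a single pivot term plus a telescoping tail, and no deeper machinery is required.
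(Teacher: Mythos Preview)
Your proof is correct and follows essentially the same approach as the paper's. Both arguments hinge on the observation that in the regime $i > n_\cG - g$ the value $\lambda_{g,i}$ does not depend on $g$; the paper states this as an ``initial remark'' that $\lambda_{g,i}=\lambda_{g',i}$ whenever both indices lie in that regime, while you make it explicit by writing down the closed form $\lambda_{g,i}=-\sqrt{n_\cG/(i(i+1))}$. The only organisational difference is that the paper packages the tail sum $\sum_{i>n_\cG-g}\lambda_{g,i}^2$ as an identity proved by induction on $g$, whereas you compute it directly via the telescoping $\tfrac{1}{i(i+1)}=\tfrac{1}{i}-\tfrac{1}{i+1}$; these are the same calculation, and your version is arguably a bit more transparent.
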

\begin{proof}
    Consider $\lambda_{g,i}$ to be defined as stated in the lemma. We will show that it satisfies the equations~\eqref{systemOfEquations_kEqualsn_kPrimeEqualsnPrimeMinusOne}.

    We start with an initial remark:
    Observe that $\lambda_{g,i} = \lambda_{g',i}$ whenever 
    $ i > n_\cG-g$ and $i > n_\cG - g'$ hold for all
    $\set{g,g'} \subseteq V(\cG)$.

    First we prove by induction that 
    \begin{align}
    \label{eq:sumPropertyLambdagi}
    \sum_{i=n_\cG - g + 1}^{n_\cG-1} \lambda_{g,i}^2 = 
    \frac{g-1}{n_\cG - g + 1}
    \end{align}
    holds for every $g \in V(\cG)$. Indeed,~\eqref{eq:sumPropertyLambdagi} 
    is trivially satisfied for $g=1$, as both sides are equal to zero.
    In the induction step we assume the~\eqref{eq:sumPropertyLambdagi} holds 
    for $g$
    and prove that it also holds for $g+1$. Towards this end consider
    \begin{align*}
    \sum_{i=n_\cG - g}^{n_\cG-1} \lambda_{g+1,i}^2 
    = \lambda_{g+1,n_\cG - g}^2 + \sum_{i=n_\cG - g + 1}^{n_\cG-1} 
    \lambda_{g+1,i}^2.
    \end{align*}
    Our initial remark implies that
    \begin{align*}
    \sum_{i=n_\cG - g}^{n_\cG-1} \lambda_{g+1,i}^2 
    = \frac{\lambda_{g,n_\cG - g}^2}{(n_\cG-g)^2} + \sum_{i=n_\cG - g + 
    1}^{n_\cG-1} \lambda_{g,i}^2.
    \end{align*}
    By using the induction hypothesis and
    the definition of $\lambda_{g,n_\cG - g}$, this can be further simplified to
    \begin{align*}
    \sum_{i=n_\cG - g}^{n_\cG-1} \lambda_{g+1,i}^2 
    = \frac{1}{(n_\cG-g)^2}\frac{n_\cG(n_\cG-g)}{n_\cG - g + 1} + 
    \frac{g-1}{n_\cG - g + 1}
    = \frac{g}{n_\cG-g}.
    \end{align*}
    Hence, this proves that~\eqref{eq:sumPropertyLambdagi} holds also for $g+1$ 
    and therefore for all $g \in V(\cG)$.
    
    Next we consider the system of equations that has to be satisfied. Observe 
    that $\lambda_{g,i} = 0$ for $i < n_\cG - g$. This 
    and~\eqref{eq:sumPropertyLambdagi} imply
    \begin{align*}
    \sum_{i=1}^{n_\cG-1} \lambda_{g,i}^2
    = \lambda_{g,n_\cG - g}^2 + \sum_{i=n_\cG - g + 1}^{n_\cG-1} 
    \lambda_{g,i}^2
    = \frac{n_\cG(n_\cG-g)}{n_\cG - g + 1} + \frac{g-1}{n_\cG - g + 1}
    = n_\cG -1
    \end{align*}
    and, again because of our initial remark, we have 
    \begin{align*}
    \sum_{i=1}^{n_\cG-1} \lambda_{g,i}\lambda_{g',i}
    &= \lambda_{g,n_\cG - g}\left(-\frac{\lambda_{g,n_\cG-g}}{n_\cG-g} \right) 
        + \sum_{i=n_\cG - g + 1}^{n_\cG-1} \lambda_{g,i}^2\\
    &= -\left(\frac{1}{n_\cG-g}\right)\frac{n_\cG(n_\cG-g)}{n_\cG - g + 1} 
        + \frac{g-1}{n_\cG - g + 1}
    = -1.
    \end{align*}
    Therefore the proposed solution for $\lambda_{g,i}$ is indeed a solution to 
    the system of 
    equations~\eqref{systemOfEquations_kEqualsn_kPrimeEqualsnPrimeMinusOne}.
\end{proof}

\subsection{\texorpdfstring{$k_\cG = n_\cG$}{kG=nG} and \texorpdfstring{$k_\cH = n_\cH-2$}{kH=nH-2}}
\label{sec:firstFoundMoreComplexCertificates_d2}

In this case, we again can find certificates by recognizing a
structure in a numeric certificate and guessing the coefficients. Such
a certificate is of the following form.

\begin{theorem}\label{thm:sosCertificaten_kGeqnG_kHeqnHminus2}
  For $k_\cG = n_\cG \ge 1$ and $k_\cH = n_\cH-2 \ge 1$, Vizing's conjecture
  is true as the polynomials
  \begin{align*}
    s'_{i} &= \frac{1}{\sqrt{n_\cG}}\left( \sum_{g \in V(\cG)} \lambda_{g,i} \left( \sum_{h \in V(\cH)}  x_{gh}\right)
             + \sum_{g \in V(\cG)} \mu_{g,i} \left( \sum_{\set{h, h'} \subseteq V(\cH)}  x_{gh}x_{gh'}\right) \right)
    \intertext{for all $i \in \set{1, \dots, n_\cG-1}$ and}
    s'_{n_\cG} &= \frac{1}{\sqrt{n_\cG}}\left( n_\cG\alpha + \beta \left( \sum_{g \in V(\cG)} \sum_{h \in V(\cH)} x_{gh} \right)
             + \gamma \sum_{g \in V(\cG)} \left( \sum_{\set{h, h'} \subseteq V(\cH)} x_{gh}x_{gh'}\right) \right),
  \end{align*}
  where $\alpha$, $\beta$ and $\gamma$ are solutions of
  \eqref{eq:systemForkHnHMinus2} and $\lambda_{g,i}$ and $\mu_{g,i}$
  are solutions of the system of equations
  \begin{subequations}
    \label{systemOfEquations_kEqualsn_kPrimeEqualsnPrimeMinusTwo}
    \begin{alignat}{2}
    \sum_{i=1}^{n_\cG-1} \lambda_{g,i}^2 &= (n_\cG-1)\beta^2  &&\quad\text{for $g\in V(\cG)$},\\
    \sum_{i=1}^{n_\cG-1} \mu_{g,i}^2 &= (n_\cG-1)\gamma^2  &&\quad\text{for $g \in V(\cG)$},\\
    \sum_{i=1}^{n_\cG-1} \lambda_{g,i}\mu_{g,i} &= (n_\cG-1) \beta\gamma  &&\quad\text{for $g \in V(\cG)$},\\
      \sum_{i=1}^{n_\cG-1} \lambda_{g,i}\lambda_{g',i} &= -\beta^2  &&\quad\text{for $\set{g, g'} \subseteq V(\cG)$},
      \label{systemOfEquations_kEqualsn_kPrimeEqualsnPrimeMinusTwo1}\\
      \sum_{i=1}^{n_\cG-1} \mu_{g,i}\mu_{g',i} &= -\gamma^2  &&\quad\text{for $\set{g, g'} \subseteq V(\cG)$},
      \label{systemOfEquations_kEqualsn_kPrimeEqualsnPrimeMinusTwo2}\\
      \sum_{i=1}^{n_\cG-1} \lambda_{g,i}\mu_{g',i} &= -\beta\gamma  &&\quad\text{for $\set{g, g'} \subseteq V(\cG)$},
      \label{systemOfEquations_kEqualsn_kPrimeEqualsnPrimeMinusTwo3}
    \end{alignat}
  \end{subequations}
  are a $2$-sos certificate of $\fviz$.
\end{theorem}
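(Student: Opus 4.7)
The plan is to follow the same strategy used to establish Theorem~\ref{thm:sosCertificaten_kGeqnG_kHeqnHminus1}: rather than prove the congruence $\sum_{i=1}^{n_\cG}(s'_i)^2 \equiv \fviz \mod \Iviz$ from scratch, I will invoke the simpler certificate already established in Theorem~\ref{thm:sosCertificaten_kGeqnG_kHeqnHminus2_easy}. That theorem provides polynomials $s_g = \alpha + \beta\SigSubset{1} + \gamma\SigSubset{2}$ (for $g \in V(\cG)$) such that $\sum_{g \in V(\cG)} s_g^2 \equiv \fviz \mod \Iviz$ whenever $\alpha,\beta,\gamma$ satisfy the system~\eqref{eq:systemForkHnHMinus2}. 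It therefore suffices to prove the identity
\begin{equation*}
    \sum_{i=1}^{n_\cG} (s'_i)^2 \;=\; \sum_{g \in V(\cG)} s_g^2
\end{equation*}
at the level of polynomials (no reduction modulo $\Iviz$ needed), and the conclusion then follows at once. Because this reuses an already-proved statement about $s_g$, no circularity arises.

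Next I would expand both sides of the claimed polynomial identity. The right-hand side expands directly into
\begin{equation*}
\sum_{g} s_g^2 = n_\cG\alpha^2 + 2\alpha\beta\sum_g\SigSubset{1} + 2\alpha\gamma\sum_g\SigSubset{2}
+ \beta^2\sum_g\SigSubset{1}^2 + 2\beta\gamma\sum_g\SigSubset{1}\SigSubset{2} + \gamma^2\sum_g\SigSubset{2}^2,
\end{equation*}
which contains only ``diagonal'' ($g=g'$) products of the $\SigSubset{i}$. On the left, the $n_\cG-1$ polynomials $s'_i$ are $\frac{1}{\sqrt{n_\cG}}$ times a linear combination, so squaring and summing produces a sum of terms $\sum_i \lambda_{g,i}\lambda_{g',i}$, $\sum_i \mu_{g,i}\mu_{g',i}$, and $\sum_i \lambda_{g,i}\mu_{g',i}$ multiplying the six product polynomials $\SigSubset{1}\SigPrimeSubset{1}$, $\SigSubset{2}\SigPrimeSubset{2}$, $\SigSubset{1}\SigPrimeSubset{2}$ (both orderings) over all ordered pairs $(g,g')$. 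Splitting each such sum according to whether $g=g'$ or $g\neq g'$, and applying the six equations of~\eqref{systemOfEquations_kEqualsn_kPrimeEqualsnPrimeMinusTwo}, replaces each inner sum by an explicit scalar multiple of $\beta^2$, $\gamma^2$ or $\beta\gamma$. The term $(s'_{n_\cG})^2$ contributes the constant $n_\cG\alpha^2$, the linear pieces $2\alpha\beta\sum_g\SigSubset{1}+2\alpha\gamma\sum_g\SigSubset{2}$, and (after expanding the squared sums) the quadratic pieces $\beta^2(\sum_g\SigSubset{1})^2/n_\cG$, $\gamma^2(\sum_g\SigSubset{2})^2/n_\cG$, and $2\beta\gamma(\sum_g\SigSubset{1})(\sum_{g'}\SigPrimeSubset{2})/n_\cG$.

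The decisive computation is then the matching of coefficients of each monomial. For the coefficient of $\SigSubset{1}^2$ one obtains $\tfrac{1}{n_\cG}[(n_\cG-1)\beta^2 + \beta^2] = \beta^2$, exactly the right-hand side, and analogously for $\SigSubset{2}^2$ with $\gamma^2$. For the diagonal cross term $\SigSubset{1}\SigSubset{2}$ the two contributions are $\tfrac{2(n_\cG-1)\beta\gamma}{n_\cG}$ from $\sum_{i<n_\cG}(s'_i)^2$ and $\tfrac{2\beta\gamma}{n_\cG}$ from $(s'_{n_\cG})^2$, summing to $2\beta\gamma$. For each off-diagonal pair $\{g,g'\}$ the contributions to $\SigSubset{1}\SigPrimeSubset{1}$, $\SigSubset{2}\SigPrimeSubset{2}$, and $\SigSubset{1}\SigPrimeSubset{2}+\SigPrimeSubset{1}\SigSubset{2}$ cancel precisely: the inner products over $i=1,\dots,n_\cG-1$ contribute $-\beta^2$, $-\gamma^2$, and $-\beta\gamma$ per such pair (with an overall factor $2/n_\cG$), while $(s'_{n_\cG})^2$ contributes the matching positive quantities from expanding $(\sum_g\SigSubset{1})^2$, $(\sum_g\SigSubset{2})^2$, and $(\sum_g\SigSubset{1})(\sum_{g'}\SigPrimeSubset{2})$. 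The linear and constant coefficients on the left come solely from $(s'_{n_\cG})^2$ and already match. The only potential obstacle is bookkeeping: making sure the ordered/unordered distinction is applied consistently when distributing cross products such as $(\sum_g\SigSubset{1})(\sum_{g'}\SigPrimeSubset{2})$, where both orderings $\SigSubset{1}\SigPrimeSubset{2}$ and $\SigPrimeSubset{1}\SigSubset{2}$ appear for $g\neq g'$. Provided the equations~\eqref{systemOfEquations_kEqualsn_kPrimeEqualsnPrimeMinusTwo1}--\eqref{systemOfEquations_kEqualsn_kPrimeEqualsnPrimeMinusTwo3} are read symmetrically in $g$ and $g'$ (as is forced by the unordered indexing $\{g,g'\}$), the cancellation is exact and the identity follows, completing the proof.
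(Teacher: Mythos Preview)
Your proposal is correct and follows essentially the same approach as the paper's own proof sketch: reduce to the simpler certificate of Theorem~\ref{thm:sosCertificaten_kGeqnG_kHeqnHminus2_easy} by proving the polynomial identity $\sum_{i=1}^{n_\cG}(s'_i)^2 = \sum_{g\in V(\cG)} s_g^2$, expanding both sides in terms of the $\SigSubset{i}\SigPrimeSubset{j}$ and using the equations~\eqref{systemOfEquations_kEqualsn_kPrimeEqualsnPrimeMinusTwo} to eliminate the off-diagonal terms. Your coefficient checks (including the careful handling of the ordered cross terms $\SigSubset{1}\SigPrimeSubset{2}$) are exactly the bookkeeping the paper leaves implicit.
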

\begin{proof}[Sketch of the proof]
    The proof can be done analogously to the proof of Theorem~\ref{thm:sosCertificaten_kGeqnG_kHeqnHminus2},
     hence we want to show that 
    \begin{align*}
    \sum_{i=1}^{n_\cG} (s'_{i})^2 = \sum_{g \in V(\cG)} s_{g}^2,
    \end{align*} 
    where the $s_i$ are those from
    Theorem~\ref{thm:sosCertificaten_kGeqnG_kHeqnHminus2_easy}. Towards
    that end we first simplify and express $s'_i$ in terms of
    $\SigSubset{i}$. Then we use binomial expansion to express the
    squares. In the result we can use
    \eqref{systemOfEquations_kEqualsn_kPrimeEqualsnPrimeMinusTwo} in
    order to eliminate all expressions of the form
    $\SigSubset{i}\SigPrimeSubset{j}$ and in order to simplify all
    expressions of the form $\SigSubset{i}\SigSubset{j}$. Eventually
    it is easy to see that the result is in fact a reformulation of
    $\sum_{g \in V(\cG)} s_{g}^2$. Hence the $s'_i$ form a certificate
    due to Theorem~\ref{thm:sosCertificaten_kGeqnG_kHeqnHminus2_easy}.
\end{proof}

\newpage

\end{document}